\newtheorem{theorem}{Theorem}[section]
\newtheorem{lemma}[theorem]{Lemma}
\newtheorem{corollary}[theorem]{Corollary}
\newtheorem{definition}{Definition}[section]
\theoremstyle{remark}
\newtheorem{remark}[theorem]{Remark}
\theoremstyle{definition}
\numberwithin{equation}{section}
\newcommand{\R}{\ensuremath{\mathbb{R}}}
\newcommand{\N}{\ensuremath{\mathbb{N}}}
\newcommand{\Z}{\ensuremath{\mathbb{Z}}}
\newcommand{\veps}{\varepsilon}
\newcommand{\s}{\ensuremath{s}}
\newcommand{\flap}{\ensuremath{(-\Delta)^{\s}}}
\newcommand{\iflap}{\ensuremath{(-\Delta)^{-\sigma}}}
\newcommand{\LL}{\mathcal{L}}
\newcommand{\dd}{\,\mathrm{d}}
\DeclareMathOperator{\supp}{supp}
\newcommand{\Se}{\mathcal{S}_{\tau}}
\begin{document}

\title[Finite differences for the porous medium with nonlocal pressure]{A convergent finite difference-quadrature scheme for the porous medium equation with nonlocal pressure}

\author[F.~del~Teso]{F\'elix del Teso}

\address[F. del Teso]{Departamento de Matem\'aticas, Universidad Aut\'onoma de Madrid (UAM), Campus de Cantoblanco, 28049 Madrid, Spain} 

\email[]{felix.delteso\@@{}uam.es}

\urladdr{https://sites.google.com/view/felixdelteso}

\keywords{porous medium equation, nonlocal pressure, fractional PDE, fractional Laplacian, integrated problem, finite difference method, finite volume method, convergence, weak convergence, viscosity solutions}

\author[E.~R.~Jakobsen]{Espen R. Jakobsen\\ \\ \\ {\scriptsize Communicated by Endre Süli}}

\address[E.~R.~Jakobsen]{Department of Mathematical Sciences, Norwegian University of Science and Technology}
\email[]{espen.jakobsen\@@{}ntnu.no}
\urladdr{https://folk.ntnu.no/erj/index.html}

\address[]{}
\email[]{}
\urladdr{}

\subjclass[2010]{
 35K15,
 35K65, 
 35J75, 
 35J92, 
 35D40, 
 76S05,
 65M06,
 65M12}

\date{17 December 2025: Accepted for publication in the journal Foundations of Computational Mathematics.}

\begin{abstract}\noindent   
  We introduce and analyze a numerical approximation of the
  porous medium equation with fractional potential pressure introduced
  by Caffarelli and V\'azquez:
\[
\partial_t u = \nabla \cdot (u^{m-1}\nabla (-\Delta)^{-\sigma}u) \qquad \textup{for} \qquad m\geq2 \quad \textup{and} \quad \sigma\in(0,1).
\]
Our scheme is for one space dimension and positive solutions $u$. It
consists of solving numerically the equation satisfied by
$v(x,t)=\int_{-\infty}^xu(y,t)dy$, the quasilinear nondivergence form equation
\[
\partial_t v= -|\partial_x v|^{m-1} (- \Delta)^{s} v \qquad  \textup{where} \qquad  s=1-\sigma,
\]
and then computing $u=v_x$ by numerical differentiation. Using
upwinding ideas in a novel way, we construct a new and simple, monotone and
$L^\infty$-stable, approximation for the $v$-equation.  The
full scheme then becomes a conservative up-wind finite volume 
approximation for the $u$-equation. We show local uniform
convergence to the unique discontinuous viscosity solution for the
$v$-problem, and using ideas from probability
theory, we prove that the 
approximation of $u$ converges 
up to normalization in $C(0,T; P(\R))$ where $P(\R)$ is the space of probability
measures under the Rubinstein-Kantorovich  (bounded Lipschitz)  metric.  
The analysis includes
also fundamental solutions where the initial data for $u$ is a Dirac mass. Numerical tests are
included to support the results. Our scheme seems to
be the first numerical scheme for this type of problems.
\end{abstract}

\maketitle

\tableofcontents 

\addtocontents{toc}{\protect\setcounter{tocdepth}{1}}

\section{Introduction} \label{sec:intro}

In this paper we introduce and analyze 
the first numerical approximation of the
fractional pressure equation studied by Caffarelli and V\'azquez in \cite{CaVa11} for $m=2$:
\begin{align}
\partial_t u - \nabla \cdot (u^{m-1} \nabla \iflap u)&=0 \qquad \textup{in} \quad \R^d\times(0,T), \label{eq:maineq} \tag{FPE} \\
u(\cdot,0)  &=\mu_0  \hspace{5.09mm}   \textup{in} \quad \R^d  \label{eq:maineqini}  \tag{IC$_u$},
\end{align} 
 where  
$m>1$, $\sigma\in (0,1)$, 
and $\mu_0\in \mathcal{M}^+(\R^d)$, the
space of nonnegative finite Radon measures. The operator
$\nabla
\iflap=\nabla^{1-2\sigma}$ is a fractional gradient of order $1-2\sigma$
defined in Section \ref{sec:theoresults2}. This equation was first
introduced for $m=2$ in  \cite{BiKaMo10} as the derivative
of a nonlinear nonlocal dislocation model. In \cite{CaVa11}
 it was studied as a nonlinear porous medium flow with
fractional potential pressure, and then extended to the full slow
diffusion range $m\geq2$ in \cite{StTeVa16}.
It is one out of the two main nonlocal generalizations of the
porous medium equation (PME). The other one,
\begin{align}\label{2ndFPME}
\partial_t u + (-\Delta)^{s}u^m &=0 \qquad \textup{in} \quad \R^d\times(0,T), 
\end{align} 
along with many extensions of the form $\partial_t u - \mathcal L
\varphi(u) =f$, have been considered  e.g. in \cite{dP-etal11,dP-etal12,CiJa11,dTEnJa17}.  While both equations preserve mass 
and energy, some other properties of the local PME are lost. As
opposed to equation \eqref{2ndFPME}, solutions of equation
\eqref{eq:maineq}
has finite speed of propagation as in the local case \cite{CaVa11,StTeVa16}.
However, comparison is lacking and uniqueness is challenging
\cite{CaVa11,StTeVa16}. Equation  \eqref{2ndFPME} has comparison and general uniqueness (even in the fast diffusion range
$m\in(0,1)$) 
\cite{dP-etal11,dP-etal12,dTEnJa17}. 
Existence of weak solutions of \eqref{eq:maineq} holds in any
dimension \cite{CaVa11,StTeVa16}, while uniqueness is only known in
dimension $d=1$ where  it is obtained indirectly via a  related integrated problem: 
\begin{align}
\partial_t v + |\partial_x v|^{m-1} \flap v&=0,\qquad \textup{in} \quad \R\times(0,T)=:Q_T, \label{eq:integP} \tag{IP} \\
u(\cdot,0)  &=v_0,  \hspace{5.5mm}   \textup{in} \quad
\R,  \label{eq:integPini}  \tag{IC$_v$} 
\end{align}
 where  $s=1-\sigma$. 
When $v_0(x)=\int_{-\infty}^x \dd \mu_0(y)$, the solutions of the two
problems satisfy
$$v(x,t)=\int_{-\infty}^x u(y,t)\dd y\qquad\text{and}\qquad v_x(x,t)=u(x,t).$$
Equation \eqref{eq:integP} is a degenerate quasilinear parabolic equation in
nondivergence form. Solutions need not to be smooth and well-posedness
is understood in the viscosity solution sense
\cite{BiKaMo10,ChJa17}. 
The relation between
\eqref{eq:maineq} and \eqref{eq:integP} was noted in
\cite{BiKaMo10} for $m=2$  while studying dislocation models, and the currently most general uniqueness result for \eqref{eq:maineq} is
given in  \cite{StTeVa16}, see also \cite{ChJa17}. 
Note that when $u$ is nonnegative, then $v$ must be
nondecreasing. This will be the setting of this paper.

While different numerical approximations have been proposed for
\eqref{2ndFPME} and its various extensions \cite{Dro10,CiJa14,DrJa14,dTEnJa18,dTEnJa19},  we are not aware of any approximation schemes for
\eqref{eq:maineq}. The goal of this paper is to introduce a robust and
stable numerical method for \eqref{eq:maineq} for which we can prove
convergence. 

This method will be fundamentally different from the
discretizations of \eqref{2ndFPME}, since it will be based on the integrated problem
\eqref{eq:integP}-\eqref{eq:integPini}.
The method we propose has two steps. First we construct 
a  conditionally  consistent\footnote{ Consistent for nondecreasing solutions where $\max\{D_hV_i^j,0\}^{m-1}=|D_hV_i^j|^{m-1}=(D_hV_i^j)^{m-1}$.}, stable, monotone,
and explicit in time discretization for \eqref{eq:integP}-\eqref{eq:integPini}:
\begin{align}\tag{S}
  V_i^{j+1}&=V_i^j + \tau 
  \max\{D_hV_i^j,0\}^{m-1}
  \flap_h  V_i^{j}, && i\in \Z, \quad j\in \N, \label{schemeVintro}\\
  V_i^0 &=v_0(x_i),&& i\in\Z,\tag{IC$_S$} \label{schemeVintroini}
\end{align}
where $V_i^j\approx v(x_i,t_j)$ for all grid points $(x_i,t_j)$, 
and $D_h$ and $\flap_h$ are defined in Section \ref{sec:main}. 
Then from
the solution $V_i^j$ we compute an approximation $U^j_i$ of the solution
$u(x_i,t_j)$ of \eqref{eq:maineq}-\eqref{eq:maineqini} by numerical
differentiation:  
\begin{equation}\label{SchemeUintro}
U^j_i=\frac{V^j_{i+1}-V^j_i}h.
\end{equation}
The discretization we propose for $V$ is a combination of an explicit Euler
approximation in time and a novel finite difference quadrature
approximation of the quasilinear operator. One of the main
contributions of this paper is the construction of this discretization
so that it becomes monotone. We do this using upwinding ideas in a way 
we have never seen before. The resulting discretization has a compact
stencil and is much simpler and
more direct than other monotone approximation of quasilinear operators
we know of. Typically such discretizations rely on extra 
discretization parameters, wide stencils, and/or linearization and fixed point iterations:
 See e.g. \cite{CrLi96,De00,Ob04} for mean curvature and similar equations, 
\cite{LoRa05,Ob08,FeNe09,FrOb11,BeFrOb14,NoNtZh19} for Monge-Ampere equations, \cite{dTLi22,dTMeOc23} for $p$ and $\infty$-Laplacian and fractional $p$-Laplacian equations, and \cite{Obe05,Obe13,BuCaRo22a,LiSa22,BuCaRo22} for normalized versions of these operators. 

The scheme given by
\eqref{schemeVintro}-\eqref{schemeVintroini} and \eqref{SchemeUintro}
can be interpreted as a finite volume approximation of
\eqref{eq:maineq}. When $m=2$ it looks like a classical 
conservative upwind scheme \cite[Example
  1.1]{EGH00}:
\[
U_i^{j+1}=U_i^j - \tau
\frac{F_{i+\frac{1}{2}}^j-F_{i-\frac{1}{2}}^j}{h}\qquad \textup{for} \qquad F_{i+\frac{1}{2}}^j= U_{i}^j \big(\Phi_{i+\frac{1}{2}}^j\big)_+  + U_{i+1}^j \big(\Phi_{i+\frac{1}{2}}^j\big)_{-},
\]
where $\Phi$ is an approximation of \(
\phi= - \nabla^{1-2\sigma} u=   (-\Delta)^{s} v\) for
$s=1-\sigma$. E.g. when
we discretize $(-\Delta)^s$ by \emph{powers of the
  discrete Laplacian} $(-\Delta_h)^s$ (see \cite{Cia-etal18} and
Appendix \ref{app:disfl}), then
\[
\Phi_{i+\frac{1}{2}}^j= (-\Delta_h)^s V_{i}^j = -D_h^+  (-\Delta_h)^{-\sigma} U_i^{j}
\qquad \text{where}\qquad \text{$D^{\pm}_h$ are forward/backward differences.}\]
We refer to Section \ref{sec:FVM} for more details. This type of
schemes have many similarities to schemes for other types of 
diffusion-convection-aggregation problems
\cite{EGH00,CP04,BF12,CCH15,BCMS20}.
Here a major difficulty is to design the schemes so that they are provably
convergent. Previous convergence analyses mostly follow the setup of
\cite{EGH00}
exploiting uniform $L^\infty$ and energy estimates for the problem in
the original variables. In some 
cases convergence can be proved even for degenerate 
equations \cite{EGH00,CP04}, while in other cases which are somewhat closer to
our setup, convergence holds in nondegenerate regions
\cite{BCMS20}.  We refer to \cite[Section 7]{Gom23} for a recent survey on the topic in the context of aggregation-diffusion equations.

The main results of this paper are convergence results as $h,\tau\to
0$ under CFL-conditions.  For suitably defined interpolants of $U$
and $V$, denoted by $\bar U$ and $\bar V$ respectively, local uniform convergence of $\bar V\to
v$ in time and space and weak-$*$ convergence in $C_b(\R)$ of $\bar U\to u$
 locally uniformly  in time. We cover both ``regular'' cases when $0\leq u_0\in
L^1(\R)$ and ``singular'' cases when $u_0$ is a delta measure. In the
latter case $u$ is a fundamental solution and the solution $v$ has
discontinuous initial data. The situation is reminiscent of
probability theory where pointwise convergence of cumulative
densities (``$v$'') implies weak convergence of density functions
(``$u$''), and another contribution of this paper is the use
of such ideas in numerical analysis to prove convergence of $U$ from
the local uniform convergence of $V$.  The convergence argument for
$U$ also needs tightness for $u$ and $U$, which we prove in an
untraditional way exploiting the relation between $(u,U)$ and $(v,V)$, 
mixing mass preservation and finite speed of propagation for
\eqref{eq:maineq},  
comparison for \eqref{eq:integP}, and the convergence of $\bar V\to
v$. To show convergence of $V$, we prove a 
number of a priori, monotonicity, and asymptotic results for $V$,
consistency of the scheme, and conclude using the so-called ``half-relaxed
limit'' method of Barles-Perthame-Souganidis \cite{BaSo91}. Most of
these results crucially rely on the discretization being monotone. Note that
in the ``singular'' data case, the convergence is locally uniform except
at the discontinuity point and the limit has to be interpreted as a
discontinuous viscosity solution. In this paper we extend this type of
theory to our setting and in some cases give more precise results than
previously. 

 In the end we formulate our convergence results for $U$ in terms of
the Rubinstein-Kantorovich  (or bounded-Lipschitz)  distance $d_0$, which
metrizes weak-$*$ convergence in the space of probability measures $P(\R)$. The
result, $\max_{t\in[0,T]}d_0(u(t),\bar U(t))\to0$, is equivalent (up to
normalization) to convergence in $C([0,T];P(\R))$.  
Such type of (weak-$*$/Wasserstein-like) convergence results for numerical
schemes are not completely new. In the setting of hyperbolic conservation laws
there are several such results e.g. in
\cite{NeTa92,FjSo16,Fjo-etall17,RuSaSo19}. In these papers,
convergence is proved using duality arguments which are different from
our ``cummulative density'' approach. We also refer to \cite{CaFjSo21}
for a 1-Wasserstein convergent approximation of the aggregation equation.
 
Finally we mention that the finite volume interpretation of our
approximation of \eqref{eq:maineq} allows for 
generalizations to more general equations and higher
dimensions. However, since the interpretation in terms of cumulative 
densities is then lost, a different approach is needed to prove
convergence.
We can also handle the fast diffusion range of \eqref{eq:maineq} where
$m\in(1,2)$ and the model has infinite speed of propagation. This case can be
treated as in \cite[Section 
  4.6]{dTEnJa18} by approximating $u^{m-1}$ by $f_{\veps}(u)=(u +
\veps)^{m-1}-\veps^{m-1}$ in \eqref{eq:maineq}. 
The CFL conditions will then depend also on the extra
approximation parameter $\veps$.
\smallskip

\emph{ Discussion of literature:} We give some more details for the model
\eqref{eq:maineq}. In \cite{CaVa11} the authors proved existence of
weak solutions, finite speed of 
propagation, lack of comparison principle, and a series of energy
estimates. Asymptotic time behavior of the solutions were analyzed in
\cite{CaVa11b}, $L^1$-$L^\infty$ smoothing effect and regularity of
solutions when $s\not=1/2$ in \cite{CaSoVa13} (with Soria) and when
$s=1/2$ in \cite{CaVa15}. The explicit 
form of a self-similar solution was found by Biler, Imbert and Karch in
\cite{BiImKar11,BiImKa15}. The model was then extended to the full slow
diffusion range $m>1$ in
Stan, del Teso, and V\'azquez \cite{StTeVa16}. Here the
authors proved finite speed of 
propagation for $m\geq2$, infinite speed of propagation for
$m\in(1,2)$,
$L^1$-$L^\infty$ smoothing, existence for measure data, while asymptotic
behaviour in dimension $d=1$ is given in \cite{StTeVa18b}.  The
model has also been studied as a Wasserstein gradient flow of a
fractional Sobolev norm when $m=2$ in \cite{LiMaSe18},  and for more general nonlocal interactions in \cite{FoPaSc25}. Finally, we mention the recent preprint \cite{CaFrSu24} where a  finite element method is developed for a nondegenerate  version of \eqref{eq:maineq} obtained by adding a 
linear diffusion term.

A scheme for an integrated equation is given in \cite{CaFCVa22}. Here this problem corresponds the simpler limit case $s=0$ ($\sigma=1$)  where the resulting equation is a relatively standard local first order Hamilton-Jacobi equation. Thus, classical monotone schemes from the literature can be used. However, there is no convergence analysis for the original problem.  In the limit case $\sigma=0$ we recover the standard Porous Medium Equations. Most of the arguments of this paper also applies in this case, but the resulting scheme is indeed more complicated than a  finite difference scheme of the original problem (see e.g. \cite{dTEnJa19}).  
 
The model \eqref{eq:integP} was studied first in the case $m=2$ by
Biler, Karch, Monneau \cite{BiKaMo10}, and later for any $m>1$, as
special case of a much more general theory by Chasseigne and Jakobsen in
\cite{ChJa17} 
that includes nonlocal versions of $p$-Laplace evolution
equations. Both establish comparison, existence, and uniqueness
results for viscosity solutions, as well as
nonlocal to local limits of these problems in \cite{ChJa17}. 

The rest of this paper is organized as follows: Section \ref{sec:theoresults2}
contains the definitions of fractional gradients and weak solutions,
along with the well-posedness, stability, and tightness 
results for the two equations. In Section \ref{sec:main} we construct the
numerical schemes and state the main convergence results. Then in
Section \ref{sec:theoresults} we define (discontinuous) viscosity solutions for
\eqref{eq:integP} and state and prove several results that we will
need in the later analysis. The discretizations of \eqref{eq:integP}
and \eqref{eq:maineq} are analyzed in Sections \ref{sec:propNumSch}
and \ref{sec:propUandconv} respectively, with a priori estimates and
convergence. In section \ref{sec:num} we do numerical experiments to
check the convergence of our schemes. Finally, in the appendix we
give all the details of one of the many monotone discretizations of the
fractional Laplacian for which our results hold.

\section{Preliminaries}
  \label{sec:theoresults2}

In this section we define precisely the nonlocal gradient and give
well-posedness result for the two problems \eqref{eq:maineq}-\eqref{eq:maineqini} and \eqref{eq:integP}-\eqref{eq:integPini}.
For $s\in(0,1)$, the fractional Laplacian $\flap$ is defined as a
Fourier multiplier with symbol $\widehat\flap=|\xi|^{2s}$, or equivalently as a singular integral
\[
\flap\phi(x)=c_{d,s} \textup{P.V.} \int_{|y|>0}  (\phi(x)-\phi(x+y)) \frac{\dd y}{|y|^{d+2s}},
\]
for smooth functions $\phi$  and $c_{d,s}=\frac{2^{2s}
  \Gamma(\frac{d+2s}{2})}{\pi^{\frac{d}{2} }|\Gamma(-s)|}$. The \emph{nonlocal gradient}
  $\nabla^{1-2\sigma}$ (see \cite{BiImKa15}), is defined to the
Fourier multiplyer with symbol $\widehat{\nabla^{1-2\sigma}}= 
i\xi|\xi|^{2\sigma}$, or in the singular integral representation as 
\[
\nabla^{1-2\sigma} \phi(x)= \tilde{c}_{d,s} \int_{\R^d} (\phi(x)-\phi(x+y))\frac{y}{|y|}\frac{\dd y}{|y|^{d+1-2\sigma}}.
\]
This operator is well-defined for $\sigma\in(0,1)$, and formally
$\nabla^{1-2\sigma}=\nabla \iflap$. The latter representation is rigorous
 when $\sigma \in
(0,1)$ and $d>2\sigma$, which is the range where $(-\Delta)^{\sigma}$ has an
inverse.\footnote{The inverse is then the Riesz potential of order $2\sigma$, 
\[
\iflap \phi(x)= c_{d,-\sigma}\int_{\R^d} \phi(x-y) \frac{\dd y}{|y|^{d-2\sigma}}.
\]}
It is standard to check that $\nabla \cdot\nabla^{1-2\sigma} =
(-\Delta)^{1-\sigma}$. 

We introduce now the concept of weak solutions for \eqref{eq:maineq}-\eqref{eq:maineqini}.

\begin{definition}
We say that $u:\R^d\to \R_+$ is a weak solution of \eqref{eq:maineq}-\eqref{eq:maineqini} with initial data $\mu_0\in \mathcal{M}^{+}(\R^d)$ if $u\in L^1_{\textup{loc}}(\R^d\times(0,T))$, $u^{m-1}\nabla \iflap u \in L^1_{\textup{loc}}(\R^d\times(0,T))$, and 
\begin{equation*}
\int_0^T\int_{\R^d} u(x,t) \phi_t(x,t)\dd x\dd t-\int_0^T\int_{\R^d}  u^{m-1}(x,t) \nabla (-\Delta)^{-\sigma} u(x,t) \cdot \nabla \phi(x,t) \,\dd x\dd t+  \int_{\R^d} \phi(x,0) \dd \mu_0(x)=0,
\end{equation*}
for all test functions  $\phi \in C^1_c(\R^d \times [0,T))$.
\end{definition}

The following theorem provides existence and some properties of weak
solutions of \eqref{eq:maineq}-\eqref{eq:maineqini} for different
types of initial data. A proof and more qualitative properties of
solutions can be found in \cite{StdTVa18}. 

\begin{theorem}\label{thm:maineq}
Assume  $m\in (1,+\infty)$,  $\sigma\in (0,1)$,  $d\ge 1$ and $\mu_0 \in \mathcal{M}^{+} (\R^d)$. 
\begin{enumerate}[(a)]
\item\label{thm:maineq-itemA} Then there exists a nonnegative weak solution $u$ of problem
  \eqref{eq:maineq}-\eqref{eq:maineqini} such that,
  \smallskip
  \begin{align*}
    &u \in  L^\infty((0,\infty);L^1(\R^d))\cap L^\infty (\R^d \times
    (\eta, \infty)) \qquad \forall\eta>0,\\[0.2cm]
&\|u(\cdot,t)\|_{L^1(\R^d)}=\mu_0(\R^d)\qquad \forall t>0,\qquad  \text{and} \\[0.2cm]
&\| u(\cdot,t)\|_{L^{\infty}(\R^d)} \le C_{N,s,m} \, t^{-\gamma}
\mu_0(\R^d)^{\delta}\qquad\text{for}\qquad
\gamma=\tfrac{d}{(m-1)d+2(1-s)},\quad \delta=\tfrac{2(1-s)}{(m-1)d+2(1-s)}.
  \end{align*}
  \smallskip
\item If  $\mu_0=u_0\in L^1(\R^d)$, 
then 
$u \in  L^\infty( [  0,\infty);L^1(\R^d))\cap L^\infty (\R^d \times
  (\tau, \infty))$ for all $\tau>0$, and
  $$\|u(\cdot,t)\|_{L^1(\R^d)}=\|u_0\|_{L^1(\R^d)}.$$
\item If $\mu_0=u_0\in L^1(\R^d)\cap L^\infty(\R^d)$, then $u \in  L^\infty( [  0,\infty);L^1(\R^d)\cap L^\infty(\R^d))$ and 
  $$\|u(\cdot, t)\|_{L^\infty(\R^d)}\leq \|u_0\|_{L^\infty(\R^d)}.$$
  \item  If the dimension $d=1$, then $u$ is unique in
  $L^\infty((0,\infty);L^1(\R)).$
  \medskip
  \item\label{thm:maineq-itemFSP} If $m\geq2$, then $u$ has finite speed of propagation in the
    sense that, if $\supp\{\mu_0\}$ is a bounded set then
    $\supp\{u(\cdot,t)\}$ is also bounded for all $t\geq0$.
    \medskip
    \item\label{thm:maineq-itemtight} If $m\geq2$ and the dimension $d=1$, then $u(t,\cdot)$ is tight: For every $\veps,T>0$ there exists $R=R(\veps,\mu_0,T)>0$
      such that 
\[
\sup_{t\in[0,T]} \int_{|x|>R} u(x,t)\dd x<\veps.
\]
\end{enumerate}
\end{theorem}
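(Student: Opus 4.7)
My plan is to exploit the one-dimensional correspondence between \eqref{eq:maineq} and the integrated problem \eqref{eq:integP}, where comparison is available, together with the finite speed of propagation of item \eqref{thm:maineq-itemFSP}. The idea is to reduce to compactly supported initial data via a two-sided sandwich of the cumulative distribution $v$, and then translate tail control of $v$ at infinity into tail mass bounds on $u$.

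Fix $\veps,T>0$ and set $\eta=\veps/2$. I would first use tightness of the finite positive measure $\mu_0$ to pick $R_\eta>0$ with $\mu_0(\{|x|>R_\eta\})<\eta$. Split $\mu_0=\mu_0^1+\mu_0^2$ with $\mu_0^1:=\mu_0|_{[-R_\eta,R_\eta]}$ compactly supported and $\mu_0^2:=\mu_0-\mu_0^1$ of total mass $<\eta$. The corresponding cumulative distributions then satisfy $v_0=v_0^1+v_0^2$ with $v_0^1,v_0^2\geq 0$ non-decreasing and $0\leq v_0^2\leq \eta$ pointwise; in particular $v_0^1\leq v_0\leq v_0^1+\eta$.

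Next, I would apply the comparison principle for \eqref{eq:integP} (from \cite{BiKaMo10,ChJa17}). Since constants solve the equation ($\partial_x$ and $\flap$ both annihilate them), $v^1+\eta$ is a solution whenever $v^1$ is. Denoting by $v,v^1$ the viscosity solutions with initial data $v_0,v_0^1$, this gives
\[
v^1(x,t)\leq v(x,t)\leq v^1(x,t)+\eta\qquad\text{for all }(x,t)\in\R\times[0,T].
\]
Applying item \eqref{thm:maineq-itemFSP} to the compactly supported datum $\mu_0^1$ provides $R^*=R^*(T,\mu_0)>0$ with $\supp u^1(\cdot,t)\subset[-R^*,R^*]$ on $[0,T]$, where $u^1=\partial_x v^1$ solves \eqref{eq:maineq} with datum $\mu_0^1$. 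Equivalently, $v^1(\cdot,t)\equiv 0$ on $(-\infty,-R^*]$ and $v^1(\cdot,t)\equiv\mu_0^1(\R)$ on $[R^*,\infty)$.

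Finally, I would translate back to $u=\partial_x v\geq 0$: using $v(-\infty,t)=0$, $v(+\infty,t)=\mu_0(\R)$,
\[
\int_{x<-R^*}u(x,t)\,dx=v(-R^*,t)\leq\eta,\qquad\int_{x>R^*}u(x,t)\,dx=\mu_0(\R)-v(R^*,t)\leq\mu_0^2(\R)<\eta,
\]
so the total tail mass is $<2\eta=\veps$ uniformly for $t\in[0,T]$. The main subtlety I anticipate is the uniform-in-$t$ bound on $\supp u^1(\cdot,t)$: item \eqref{thm:maineq-itemFSP} as stated gives only pointwise-in-$t$ boundedness, but the explicit finite-propagation estimates in \cite{StTeVa16} are non-decreasing in $t$, so the supremum over $[0,T]$ is still finite. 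Everything else amounts to a direct use of comparison in the $v$-equation.
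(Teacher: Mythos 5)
Your proposal is correct and follows essentially the same route as the paper: reduce to a compactly supported datum, apply finite speed of propagation to the truncated solution, use comparison for the integrated problem, and translate back into tail-mass bounds for $u$ via conservation of mass. The only (minor) structural differences are cosmetic: the paper truncates $v_0$ to a step function $v_{0,r}$ (i.e.\ reduces to the Dirac datum $v_0(r)\delta_r$) and runs a one-sided comparison $v_r\leq v$, handling the two tails by two symmetric one-sided arguments, whereas you truncate $\mu_0$ to $\mu_0^1=\mu_0|_{[-R_\eta,R_\eta]}$ and run a two-sided sandwich $v^1\leq v\leq v^1+\eta$ to dispatch both tails simultaneously. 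Your variant buys a slightly more compact argument; the paper's variant reduces to a Dirac datum for which the finite-speed-of-propagation estimate is most explicit. One point worth making explicit in your writeup: since $\mu_0^1$ may still contain atoms, $v_0^1$ may be discontinuous, so the comparison $v^1\leq v\leq v^1+\eta$ requires the partial comparison result for discontinuous viscosity solutions (Theorem~\ref{thm:comppp2}), not merely the $BUC$ comparison (Theorem~\ref{thm:strongCP}); the paper cites both for exactly this reason. Your anticipated subtlety about uniformity-in-$t$ of the support bound is the same one the paper silently handles by letting $R=R(r,T,u_0)$, and your resolution (the support estimate is non-decreasing in $t$) is correct.
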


Existence and properties (a), (b), (c) and (e) follow from
\cite{StdTVa18}. Uniqueness in dimension $d=1$ (d) can be found in
\cite{StTeVa18b}. We prove (f) at the end of the this section.
At this point we also mention that weak solutions of
\eqref{eq:maineq}-\eqref{eq:maineqini} have  
infinite speed
  of propagation when $m\in(1,2)$ in dimension $d=1$ \cite{StTeVa16,StdTVa18}.  When the dimension $d=1$ and $u_0=M\delta_0$ for $M>0$, the solution is
  self-similar and determines the asymptotic behaviour of weak
  solutions of \eqref{eq:maineq}-\eqref{eq:maineqini} for arbitrary initial
  data with mass $M$ \cite{StTeVa18b}.

From these results on \eqref{eq:maineq}-\eqref{eq:maineqini}, we
obtain results for \eqref{eq:integP}-\eqref{eq:integPini} in dimension
$d=1$.  Define
\begin{align}\label{def-v}
v_0(x)= \int_{-\infty}^x u_0(y)\dd y \qquad \textup{and} \qquad v(x,t)= \int_{-\infty}^x u(y,t)\dd y \qquad \textup{for all} \qquad t\geq0.
\end{align}
We will show that $v$ solves \eqref{eq:integP}-\eqref{eq:integPini}
 with $s=1-\sigma$  in the sense of viscosity solutions. 
 Definitions of viscosity and discontinuous viscosity solutions are given in 
 Section \ref{sec:theoresults}. 
First we present results for bounded integrable $u_0$. We
sketch some proofs for completeness and refer to the original sources
for others. 

We recall the notation $Q_T:=\R\times(0,T)$.  

\begin{theorem}\label{thm:l1linfv}
Assume $m\in (1,+\infty)$,  $\sigma\in (0,1)$, $s=1-\sigma$,  $d=1$, $0\leq u_0\in
L^1(\R)$, and $u$ is the weak solution of
\eqref{eq:maineq}-\eqref{eq:maineqini} given by Theorem
\ref{thm:maineq}.  If $v$ is the function defined in \eqref{def-v},
then $v\in BUC(\overline{Q}_T)$ is the unique viscosity solution of
\eqref{eq:integP}-\eqref{eq:integPini} and satisfies:
\medskip
\begin{enumerate}[(i)]
\item[(i)\ ] $0\leq v \leq
  \|u_0\|_{L^1(\R)}$ in $\overline{Q}_T$.
  \medskip
\item[(ii)\,] $v(\cdot,t)$ is nondecreasing for $t\geq0$,
  \medskip
\item[(iii)]
\(|v(x_2,t)-v(x_1,t)|\leq C_{1,s,m} \, t^{-\gamma}
\|u_0\|_{L^1(\R)}^{\delta} |x_2-x_1|\),\qquad for\quad $t>0$\quad
and\quad $x_1,x_2\in\R$, 
\medskip
\item[] where $\gamma,\delta>0$ are defined in Theorem \ref{thm:maineq}.
\end{enumerate}
\smallskip
If in addition $u_0\in
L^1(\R)\cap L^\infty(\R)$, then
\smallskip
\begin{enumerate}
\item[(iii)']
\(
|v(x_2,t)-v(x_1,t)|\leq \|u_0\|_{L^\infty(\R)}|x_2-x_1|
\),\qquad for\quad $t>0$\quad and\quad $x_1,x_2\in\R$.
\medskip
\end{enumerate}
 Moreover, the \eqref{eq:integP}-\eqref{eq:integPini} has a comparison principle for sub and super solutions.

\end{theorem}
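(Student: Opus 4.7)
The plan is to deduce each listed property of $v$ from the corresponding property of $u$ given by Theorem \ref{thm:maineq}, then identify $v$ as the viscosity solution of \eqref{eq:integP}-\eqref{eq:integPini} via an approximation argument, and finally invoke the comparison principle from the viscosity-solution literature.

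First I would dispatch items (i)–(iii)'. Nonnegativity $v\geq 0$ and the upper bound $v(x,t)\leq\|u(\cdot,t)\|_{L^1(\R)}=\|u_0\|_{L^1(\R)}$ are immediate from $u\geq 0$ together with the mass conservation in Theorem \ref{thm:maineq}(b). Monotonicity (ii) is just $\partial_x v=u\geq 0$. For the Lipschitz bound (iii), one writes
\[
|v(x_2,t)-v(x_1,t)|=\Big|\int_{x_1}^{x_2} u(y,t)\,dy\Big|\leq \|u(\cdot,t)\|_{L^\infty(\R)}|x_2-x_1|
\]
and inserts the $L^1$–$L^\infty$ smoothing estimate of Theorem \ref{thm:maineq}(a); for (iii)' one uses the $L^\infty$-contraction of Theorem \ref{thm:maineq}(c) instead.

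To show $v\in BUC(\overline{Q}_T)$ and that it is a viscosity solution of \eqref{eq:integP}, I would approximate $u_0$ by smooth, strictly positive, compactly supported data $u_{0,\e}\in L^1(\R)\cap L^\infty(\R)$ with cumulative integrals $v_{0,\e}\to v_0$ locally uniformly. Standard regularity theory for \eqref{eq:maineq} (cf.\ \cite{CaSoVa13,CaVa15,StdTVa18}) yields classical, positive solutions $u_\e$, so $v_\e:=\int_{-\infty}^{\cdot}u_\e\,dy$ is a classical, hence viscosity, solution of \eqref{eq:integP} with initial data $v_{0,\e}$. The bounds above applied to $u_\e$ give $\e$-uniform $L^\infty$ and spatial Lipschitz control of $v_\e$, and using the equation and the resulting bounds on $\flap v_\e$ one extracts a uniform modulus of continuity in time. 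Arzel\`a--Ascoli together with uniqueness of the weak solution $u$ in dimension one (Theorem \ref{thm:maineq}(d)) identifies the locally uniform limit with the $v$ of \eqref{def-v}, so $v\in BUC(\overline{Q}_T)$. Stability of viscosity solutions under locally uniform convergence for equations involving $\flap$ then ensures that $v$ is itself a viscosity solution. The comparison principle for bounded, uniformly continuous sub/supersolutions of \eqref{eq:integP} is the one established in \cite{BiKaMo10} for $m=2$ and extended in \cite{ChJa17} to all $m>1$; uniqueness of $v$ follows by applying comparison to any two solutions with the same data.

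The main obstacle I anticipate is carrying the approximation up to $t=0$ when $u_0\in L^1(\R)$ is merely integrable, because the spatial Lipschitz bound from (iii) blows up like $t^{-\gamma}$ as $t\to 0^+$ and the a priori modulus of continuity in time degenerates correspondingly. A way around this is to observe that $v_0\in BUC(\R)$ as the antiderivative of an $L^1$ function, so uniform continuity of $v$ at $t=0$ can be propagated from continuity of $v_{0,\e}\to v_0$ by an initial-layer comparison against translated/modulated sub- and supersolutions of \eqref{eq:integP}, or alternatively from $L^1$-continuity in time of $u_\e(\cdot,t)\to u_{0,\e}$ transferred through the identity $v_\e(x,t)-v_\e(x,0)=\int_{-\infty}^x(u_\e(y,t)-u_{0,\e}(y))\,dy$. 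The other delicate point is verifying the stability of viscosity solutions against our specific quasilinear nonlocal operator, but this is routine once the uniform $\e$-bounds on $v_\e$ and on $\flap v_\e$ over compact subsets of $Q_T$ are in hand.
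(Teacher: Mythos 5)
Your derivation of (i)--(iii)' is essentially identical to the paper's: each property is read off from the corresponding $L^1$, $L^\infty$, and $L^1$--$L^\infty$ smoothing estimates for $u$ in Theorem \ref{thm:maineq}. That part is fine. Where you diverge is in handling continuity of $v$ up to $t=0$ and the assertion that $v$ is a viscosity solution of \eqref{eq:integP}: the paper discharges both by citation (time continuity ``as in Theorem 8.2 of \cite{StTeVa16}''; the viscosity-solution property from \cite{BiKaMo10} for $m=2$ and \cite{StTeVa16} for $m>1$), whereas you propose a self-contained approximation-plus-stability argument. The comparison principle is handled the same way by both of you, via \cite{BiKaMo10,ChJa17} (in the paper, packaged as Theorem \ref{thm:strongCP}).

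Your approximation route is plausible in spirit but has two concrete gaps. First, ``smooth, strictly positive, compactly supported $u_{0,\e}$'' is internally contradictory. Choosing compactly supported data keeps the mass and Lipschitz bounds under control but means the equation stays degenerate at the free boundary, and the regularity theory you cite (\cite{CaSoVa13,CaVa15,StdTVa18}) yields only bounded H\"older solutions, not the $C^1$ in $x$ (and corresponding $C^{2s+}$ for $v_\e$) you need to claim $v_\e$ is a classical, hence automatic, viscosity solution of \eqref{eq:integP}. Choosing strictly positive data removes degeneracy but loses compact support and finite mass, and still would require an elliptic-regularity bootstrap that those references do not state off the shelf. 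This is precisely the technical content that \cite{BiKaMo10,StTeVa16} supply by regularizing the equation itself (not just the data), which is why the paper cites them rather than redoing it. Second, Arzel\`a--Ascoli gives subsequential locally uniform limits of $v_\e$, but to identify any such limit with $v=\int_{-\infty}^{\cdot}u\,\dd y$ you need to pass to the limit at the level of $u_\e$ and recognize the limit as the weak solution $u$ of \eqref{eq:maineq}; invoking Theorem \ref{thm:maineq}(d) presupposes that you have already shown the limit of $u_\e$ is a weak solution (which requires handling the nonlinear term $u_\e^{m-1}\nabla\iflap u_\e$), or alternatively that you have an $L^1$-stability/contraction estimate for \eqref{eq:maineq}, neither of which your sketch establishes. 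Without one of these, the identification step is circular. With these two points repaired (following the approximation scheme of \cite{StTeVa16}), your strategy would go through and would in effect reproduce the argument the paper references.
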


Part (iii) and (iii)' means that $v$ is $x$-Lipschitz, in the latter
case with a bounded in $t$ Lipschitz constant.
We refer to Section \ref{sec:theoresults} for the 
definitions and basic properties and results for continuous and
discontinuous viscosity solutions.

\begin{proof}
We start by proving the properties of $v$. Since $u\geq0$, then
trivially $v(\cdot,t)$ is nondecreasing (ii). Moreover (i) follows,
since for any $(x,t) \in \overline{Q}_T$ we have
\[
0\leq 
\int_{-\infty}^xu(y,t)\dd y =v(x,t)\leq \lim_{x\to\infty}\int_{-\infty}^xu(y,t)\dd y=\|u(\cdot,t)\|_{L^1(\R)}=\|u_0\|_{L^1(\R)}.
\]
Next, for $x_1,x_2\in \R$ such that $x_2\geq x_1$, we have that
\[
|v(x_2,t)-v(x_1,t)|= \int_{-\infty}^{x_2}u(y,t)\dd y - \int_{-\infty}^{x_1}u(y,t)\dd y = \int_{x_1}^{x_2}u(y,t)\dd y \leq \|u(\cdot,t)\|_{L^\infty(\R)}|x_2-x_1|.
\]
By Theorem \ref{thm:maineq} (a), $\|u(\cdot,t)\|_{L^1}\leq
\|u_0\|_{L^1}$, so $v$ is uniformly continuous in $x$ uniformly in
time by the dominated convergence theorem. 
Moreover, by Theorem \ref{thm:maineq} (a)-(c), 
$\|u(\cdot,t)\|_{L^\infty}\leq \|u_0\|_{L^\infty}$ when
$u_0\in L^\infty$, and $\|u(\cdot,t)\|_{L^\infty}\leq C_{N,s,m} \, t^{-\gamma}
\|u_0\|_{L^1}^{\delta}$ if not. Hence parts (iii) and (iii)'
follow. Continuity in time follows as in Theorem 8.2 in
\cite{StTeVa16}.
The fact that $v$ is a viscosity solution of \eqref{eq:integP}-\eqref{eq:integPini} can be found in \cite{BiKaMo10} in the case $m=2$ and in \cite{StTeVa16} for $m>1$.
\end{proof}

Arguing in a similar way, we get a result for initial data of the form $\mu_0=M\delta_0$.

\begin{theorem}\label{thm:v}
Assume $m\in (1,+\infty)$,  $\sigma\in (0,1)$, $s=1-\sigma$,  $d=1$, 
$\mu_0 = M\delta_0$, and $u$ is the weak solution of
\eqref{eq:maineq}-\eqref{eq:maineqini} given by Theorem
\ref{thm:maineq}.  If $v$ is the function defined in \eqref{def-v}
with $v_0$ redefined as
\[
v_0(x)= \int_{-\infty}^x \dd\mu_0(y)= 
\begin{cases}
0  &\text{if} \quad x<0,\\
M & \text{if} \quad x\geq0,
\end{cases}
\]
then $v\in C_b(\overline{Q}_T\setminus(0,0))$ is the unique
discontinuous viscosity solution of 
\eqref{eq:integP}-\eqref{eq:integPini} and satisfies:
\medskip
\begin{enumerate}[(i)]
\item[(i)\ ] $0\leq v \leq M$ in $Q_T$,
  \medskip
\item[(ii)\,] $v(\cdot,t)$ is nondecreasing for $t\geq0$,
  \medskip
\item[(iii)]
\(|v(x_2,t)-v(x_1,t)|\leq C_{1,s,m} \, t^{-\gamma}
M^{\delta} |x_2-x_1|\),\qquad for\quad $t>0$\quad
and\quad $x_1,x_2\in\R$, 
\medskip
\item[] where $\gamma,\delta>0$ are defined in Theorem \ref{thm:maineq}.
\end{enumerate}
Moreover, the \eqref{eq:integP}-\eqref{eq:integPini} has a comparison principle for  sub and super solutions.

\end{theorem}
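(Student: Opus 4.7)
The plan is to mirror the proof of Theorem \ref{thm:l1linfv} step by step, adapting each item to accommodate the singular initial datum $\mu_0 = M\delta_0$. Properties (i) and (ii) transfer essentially unchanged: (ii) is immediate from $u\geq0$, and (i) uses the mass-conservation identity $\|u(\cdot,t)\|_{L^1(\R)} = \mu_0(\R) = M$ from Theorem \ref{thm:maineq}(a) combined with the same telescoping computation as in the previous proof. For (iii), I cannot invoke $\|u_0\|_{L^\infty}$ anymore, so I instead feed in directly the smoothing bound $\|u(\cdot,t)\|_{L^\infty(\R)} \leq C_{1,s,m}\, t^{-\gamma} M^{\delta}$ from Theorem \ref{thm:maineq}(a), which yields the stated time-singular Lipschitz estimate.

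Next I would establish $v \in C_b(\overline{Q}_T \setminus \{(0,0)\})$. Boundedness is (i); spatial continuity for $t>0$ is (iii); temporal continuity for $t>0$ follows exactly as in Theorem 8.2 of \cite{StTeVa16}, since that argument only uses the local $L^\infty$ bound and the equation. The delicate point is convergence $v(x,t) \to v_0(x_0)$ when $(x,t) \to (x_0,0)$ with $x_0 \neq 0$. Here I would exploit the finite-speed-of-propagation property in Theorem \ref{thm:maineq}(\ref{thm:maineq-itemFSP}): since $\supp \mu_0 = \{0\}$ is bounded, $\supp u(\cdot,t) \subset [-R(t), R(t)]$ with $R(t)\to 0$ as $t \to 0^+$ (for instance by the self-similar scaling that controls the support radius). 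For $x_0>0$, once $t$ is small enough that $R(t) < x_0$, mass conservation gives
\[
v(x,t) \,=\, \int_{-\infty}^{x} u(y,t)\,\dd y \,=\, M \,=\, v_0(x_0)
\]
for all $x$ near $x_0$; the case $x_0<0$ is symmetric with value $0$. This is exactly why continuity fails only at the singular point.

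To identify $v$ with the unique discontinuous viscosity solution of \eqref{eq:integP}-\eqref{eq:integPini}, I would proceed by approximation. Pick a sequence $0\leq u_0^n \in L^1(\R)\cap L^\infty(\R)$ with $\|u_0^n\|_{L^1} = M$ and $u_0^n \rightharpoonup M\delta_0$ weakly-$*$ (e.g.\ mollifications of $\delta_0$). By Theorem \ref{thm:l1linfv}, the corresponding $v^n$ are continuous viscosity solutions of \eqref{eq:integP} with initial data $v_0^n \to v_0$ pointwise away from $0$. Stability of weak solutions of \eqref{eq:maineq}-\eqref{eq:maineqini} under measure-data approximations (see \cite{StdTVa18}) provides $u^n \to u$ and hence $v^n \to v$ pointwise in $\overline{Q}_T\setminus\{(0,0)\}$. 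The upper and lower half-relaxed limits of $(v^n)$ are then respectively a discontinuous viscosity subsolution and supersolution of \eqref{eq:integP}, with initial traces bounded above by the upper semicontinuous envelope of $v_0$ and below by its lower semicontinuous envelope. The final step is uniqueness: a comparison principle for discontinuous viscosity sub- and supersolutions of \eqref{eq:integP} (of the type developed in \cite{ChJa17} and extended here in Section \ref{sec:theoresults}) pinches these half-relaxed limits together and identifies them with $v$, simultaneously giving both the viscosity solution property and the comparison statement.

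The main obstacle is the last step: the standard discontinuous viscosity theory assumes that the initial datum is merely USC/LSC, but here the jump at $x=0$ has size $M$ and must be encoded in the sub/supersolution inequalities at $t=0$ so that comparison can actually be applied through the singular corner $(0,0)$. One has to verify carefully that the envelopes $v_0^* = M\mathbf{1}_{x\geq 0}$ and $(v_0)_* = M\mathbf{1}_{x> 0}$ are compatible with the comparison principle developed in Section \ref{sec:theoresults}, and that the finite-speed-of-propagation argument above supplies precisely the initial trace needed to close the comparison at the discontinuity.
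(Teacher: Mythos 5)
The paper itself gives essentially no written proof of this theorem: it states before the theorem that one argues ``in a similar way'' to Theorem \ref{thm:l1linfv} and otherwise relies on the citations in that earlier proof (specifically \cite{BiKaMo10,StTeVa16} for the fact that $v$ is a viscosity solution and Theorem \ref{thm:comppp2} for uniqueness at continuity points). Your items (i)--(iii) and the continuity of $v$ away from $(0,0)$ are exactly what the paper intends, and your finite-speed argument for $v(x,t)\to v_0(x_0)$ when $x_0\neq 0$ is the right mechanism (it uses the self-similarity of the fundamental solution to get $R(t)\to 0$, which is stronger than the bare statement of Theorem \ref{thm:maineq}(\ref{thm:maineq-itemFSP}) but is asserted in the text and is correct).

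However, your path to the viscosity-solution property and uniqueness diverges from the paper and contains a real gap, which you flag at the end but underestimate. The approximation-plus-half-relaxed-limits scheme produces a USC subsolution $\overline{v}=\limsup^* v^n$ and an LSC supersolution $\underline{v}=\liminf_* v^n$ with $\underline{v}\leq v\leq\overline{v}$; to conclude you need to pinch them together. The full comparison Theorem \ref{thm:strongCP} is not applicable because the initial data $v_0=M\mathbf{1}_{x\geq 0}$ is not in $BUC(\R)$, and the paper's partial comparison, Theorem \ref{thm:comppp2}, presupposes that a discontinuous viscosity solution $\mathcal{V}$ already exists -- it compares a sub/supersolution against $(\mathcal{V}_*)^*$ and $(\mathcal{V}^*)_*$, not against each other. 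Using it to prove existence of that very solution is circular. The device that breaks the impasse in Theorem \ref{thm:comppp2} is the spatial translation $w^a(x,t)=w(x+a,t)$, which inserts a genuinely BUC intermediate datum $v_{0,a}$ between the envelopes $\mathcal{V}_0^*$ and $\mathcal{V}_0(\cdot+a)$ and lets one invoke Theorem \ref{thm:strongCP}; your write-up does not reproduce this step, which is where the jump at $x=0$ is actually handled. In short: your existence argument still needs an independent input (either the citation to \cite{BiKaMo10,StTeVa16} that the paper uses, or an honest Perron-type construction) to supply a discontinuous viscosity solution $\mathcal{V}$ against which Theorem \ref{thm:comppp2} can be applied; only then does the ``pinching'' at continuity points go through. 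Minor additional caveat: the ``stability of weak solutions under measure-data approximations'' you invoke from \cite{StdTVa18} is not a statement the present paper records, so it would need to be verified or replaced by an argument at the level of $v$ directly.
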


Finally we mention that initial data $v_0$ does not need to be monotone in order to have  well-posedness.
\begin{theorem}\label{thm:strongCP}
Assume $m\geq 1$, $s\in(0,1)$, and $v_0\in BUC(\R)$ (resp. $v_0\in W^{1,\infty}(\R)$). Then there exists a unique viscosity
solution $v\in BUC(\overline Q_T)$ (resp. $v\in C([0,T]; W^{1,\infty}(\R))$) of \eqref{eq:integP}-\eqref{eq:integPini}.
\end{theorem}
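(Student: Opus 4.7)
The plan is to obtain well-posedness within the viscosity-solution framework for nonlocal quasilinear equations, making systematic use of the comparison principle from \cite{ChJa17} (and \cite{BiKaMo10} for $m=2$). Equation \eqref{eq:integP} can be written as $\partial_t v + F(\partial_x v, \flap v)=0$ with $F(p,\ell)=|p|^{m-1}\ell$ monotone non-decreasing in $\ell$, hence degenerate elliptic in the nonlocal sense, so it falls squarely into that theory. The comparison principle from \cite{ChJa17} between bounded upper-semicontinuous sub- and lower-semicontinuous super-solutions yields uniqueness in $BUC(\overline Q_T)$ at once; the Lipschitz case is then a fortiori unique.

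For existence I would use Perron's method. The standard ingredients are closedness of sub/super-solutions under sup/inf envelopes (available from \cite{ChJa17}) together with barriers matching the initial data. Since constants are classical solutions of \eqref{eq:integP} (because $\partial_x c=0$ annihilates the nonlinearity), the constants $\sup v_0$ and $\inf v_0$ provide trivial global super- and sub-solutions, yielding bounded solutions. To obtain continuity up to $t=0$ for $v_0\in BUC(\R)$, I would mollify to obtain $v_0^\varepsilon\in C^\infty_b(\R)$ with $\|v_0-v_0^\varepsilon\|_\infty\le\omega(\varepsilon)$, where $\omega$ is the modulus of continuity of $v_0$, and then use barriers of the form
\[
v_0^\varepsilon(x)\pm\omega(\varepsilon)\pm C_\varepsilon t, \qquad C_\varepsilon\ \ge\ \|\partial_x v_0^\varepsilon\|_\infty^{m-1}\|\flap v_0^\varepsilon\|_\infty.
\]
Perron applied between these barriers, followed by taking $\varepsilon\to 0$, produces a Perron solution $v\in BUC(\overline Q_T)$.

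For the spatial regularity I would exploit translation invariance of \eqref{eq:integP} in $x$. If $v$ is the solution and $h\in\R$, then $v^h(x,t):=v(x+h,t)$ solves \eqref{eq:integP} with initial datum $v_0(\cdot+h)$, so comparison gives
\[
\|v(\cdot+h,t)-v(\cdot,t)\|_{L^\infty(\R)}\ \le\ \|v_0(\cdot+h)-v_0\|_{L^\infty(\R)} \qquad\text{for all }t\in[0,T].
\]
In the $BUC$ case this transfers the modulus of continuity of $v_0$ to a uniform-in-$t$ spatial modulus of $v$; in the case $v_0\in W^{1,\infty}(\R)$ it gives $\|\partial_x v(\cdot,t)\|_{L^\infty(\R)}\le\|v_0'\|_{L^\infty(\R)}$, i.e.\ $v\in L^\infty(0,T;W^{1,\infty}(\R))$. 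Continuity in time is then obtained by comparing $v$ with the time-translated barriers $v(\cdot,t_0)\pm C(t-t_0)$: in the Lipschitz case the constant $C$ can be taken proportional to $\|v_0'\|_\infty^{m-1}\|\flap v(\cdot,t_0)\|_\infty$, where $\|\flap v(\cdot,t_0)\|_\infty$ is controlled by splitting the singular integral at $|y|=1$, using the Lipschitz estimate in $x$ from the previous step for the inner part and the $L^\infty$-bound for the outer part. Upgrading to joint continuity then gives $v\in C([0,T];W^{1,\infty}(\R))$.

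The main obstacle is the comparison principle itself, which is nontrivial due to the combination of the degenerate coefficient $|\partial_x v|^{m-1}$ and the nonlocal operator $\flap$; however, this is precisely the content of \cite{ChJa17}, so the role of the present proof is mainly to verify that the standard machinery applies and to extract the sharp regularity statements from translation invariance plus comparison.
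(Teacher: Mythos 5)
Your high-level plan matches the paper's one-line proof exactly: uniqueness from the comparison principle (Lemma \ref{lem:inidialData} $+$ Theorem 6.1 of \cite{ChJa17}, recorded as Theorem \ref{thm:strongCP} in Section \ref{sec:theoresults}), existence via Perron's method (the paper also offers Theorem \ref{thm:main}(a), i.e.\ convergence of the scheme, as an alternative existence route for nondecreasing data, but Perron is cleaner and avoids any appearance of circularity). Your translation-invariance argument for the spatial modulus and the Lipschitz bound $\|\partial_x v(\cdot,t)\|_\infty \le \|v_0'\|_\infty$ is the right way to extract the regularity the theorem asserts; the paper does not spell this out at all.

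There is, however, a genuine gap in your time-regularity step. You propose the barrier $v(\cdot,t_0)\pm C(t-t_0)$ with
\[
C \ \propto\ \|v_0'\|_\infty^{m-1}\,\|\flap v(\cdot,t_0)\|_\infty,
\]
and claim $\|\flap v(\cdot,t_0)\|_\infty$ is controlled by ``splitting the singular integral at $|y|=1$, using the Lipschitz estimate \dots\ for the inner part.'' That only works when $s<\tfrac12$: the Lipschitz bound $|v(x,t_0)-v(x+y,t_0)|\le L|y|$ gives an inner integrand of order $|y|^{-2s}$, which is integrable near $0$ iff $2s<1$. For $s\ge\tfrac12$ a Lipschitz function need not have bounded $\flap v$, so the quantity you want to put in $C$ is not finite in general and the barrier as written is not a supersolution. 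The standard repair is to mollify at scale $\delta$ and optimize: with $\|v_0-v_0^\delta\|_\infty\le L\delta$, $\|(v_0^\delta)''\|_\infty\lesssim L/\delta$, one gets
\[
\|\flap v_0^\delta\|_\infty \ \lesssim\ L\,\delta^{1-2s} + \|v_0\|_\infty \quad (s>\tfrac12), \qquad L|\log\delta| + \|v_0\|_\infty \quad (s=\tfrac12),
\]
whence comparison with $v_0^\delta(x)\pm L\delta \pm C_\delta t$ and a choice of $\delta$ in terms of $|t-t_0|$ gives a time modulus ($\sim|t-t_0|^{1/(2s)}$ for $s>\tfrac12$). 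A related remark applies to the $BUC$ case: the $\varepsilon$-barriers you build pin down continuity at $t=0$, but for $v\in BUC(\overline Q_T)$ you need a time-modulus at every $t_0$, which requires repeating the mollified-barrier argument at time $t_0$ using the spatial modulus of $v(\cdot,t_0)$ obtained from translation invariance (the equation being time-autonomous, this is a straightforward shift of the $t=0$ argument, but it should be said). Apart from these points — which have routine fixes — the proof is correct and is the approach the paper intends.
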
 
Uniqueness follows from Theorem \ref{thm:strongCP} in Section
\ref{sec:theoresults}, existence of nondecreasing solutions follows
from Theorem \ref{thm:main} (a), and general existence can be obtained
using Perron's method. 

  \begin{proof}[Proof of Theorem \ref{thm:maineq}\eqref{thm:maineq-itemtight}]
  Let $v_0:=\int_{-\infty}^x \dd \mu_0(y)$ and $v(x,t)=\int_{-\infty}^x
  u(y,t)\dd y$ be the corresponding solution of
  \eqref{eq:integP}-\eqref{eq:integPini}. For $r>0$, we define
  \[
  v_{0,r}(x)=\left\{\begin{split}
  0 \quad \textup{if}& \quad x<r,\\
  v_0(r) \quad \textup{if}&\quad  x\geq r,
  \end{split}\right.
  \]
  and let $v_r$ be the corresponding solution of
  \eqref{eq:integP}-\eqref{eq:integPini}. Note that $v_r$ is the
  integral of the solution $u_r$ of
  \eqref{eq:maineq}-\eqref{eq:maineqini} with initial data $\mu_{0,r}=
  v_0(r)\delta_r $. By finite speed of propagation (Theorem \ref{thm:maineq}\eqref{thm:maineq-itemFSP}), there exists $R=R(r,T,u_0)>0$ such that
  $\supp \{u_r(\cdot,t)\}\subset (-\infty,R]$ for $t\in[0,T]$. By preservation of mass
  (Theorem \ref{thm:maineq}\eqref{thm:maineq-itemA}), this implies that
\begin{equation}\label{eq:fspv}
  v_r(x,t)=\int_{-\infty}^x u_r(y,t)\, dy=\|u_r(\cdot,t)\|_{L^1(\R)}= \mu_{0,r}(\R)= v_0(r) \qquad
  \textup{for} \qquad  x\geq R \quad \textup{and} \quad t\in[0,T].
\end{equation}
  Moreover, since $v_{0,r}\leq v_0$ then, by comparison (Theorem \ref{thm:strongCP} and Theorem \ref{thm:comppp2}) we have that $v_r\leq v$. We are ready to
  prove the result. Fix $\veps>0$ and choose $r=r(\veps, u_0)$ such
  that $\mu_0(\R)< v_0(r)+\veps/2$. Thus, 
  \[
  \mu_0(\R)< v_{0,r}(x)+\veps/2\quad \textup{for all} \quad x\geq r.
  \]
  By \eqref{eq:fspv} there exists $R>0$ such that for all $t\in [0,T]$ we have
  \[
   \mu_0(\R)< v_r(R,t) + \veps/2.
  \]
  Finally, using comparison between $v_r$ and $v$, and conservation of mass of $u$, we get
  \[
  \begin{split}
  \mu_0(\R)&< v(R,t)+ \veps/2=\int_{-\infty}^R u(x,t)\dd x+\veps/2=  \|u(\cdot,t)\|_{L^1(\R)} - \int_{R}^\infty u(x,t)\dd x +\veps/2\\
   &= \mu_0(\R) - \int_{R}^\infty u(x,t)\dd x +\veps/2,
  \end{split}
  \]
that is,
\[
\int_{R}^{\infty} u(x,t) \dd x\leq \veps/2.
\]
In a similar way, one gets $\int_{-\infty}^{R} u(x,t) \dd x\leq \veps/2$. Since the computations above are uniform in $t\in[0,T]$, the result follows.
  \end{proof}

\section{Numerical scheme and main results}\label{sec:main}

We propose now an explicit finite difference approximation for the
integrated equation \eqref{eq:integP}, based on first finding a good
discretization of the quasilinear fractional operator
\begin{equation}\label{eq:op}
L^s[\phi](x):= - |\partial_x \phi(x)|^{m-1} \flap \phi(x),
\end{equation}
and then discretizing in time and solving on a grid. 
Note that $L^s[\phi]$ is well defined for $\phi \in C^2_b(\R)$. 
\medskip

\subsubsection*{Discretization of $L^s$}

Let $h>0$.
(i) We discretize $\flap$ by a (symmetric) monotone discretization
\begin{equation}\label{eq:discretization}
\flap_h \phi(x)= \sum_{k\not=0} (\phi(x)-\phi(x+x_k))
\omega_{k}\qquad\text{where}\qquad
\omega_{k}=\omega_{-k}\geq0\quad\text{for}\quad  k\in \Z.
\end{equation}
This is a quadrature approximation where the points $x_k\in\R$ and
weights $\omega_k$ will depend on $h$. We will assume the following
consistency and monotonicity assumptions:

\begin{equation}
\label{as:cons} \tag{$A_{\textup{c}}$}  \hspace{-3cm}\textup{$(-\Delta)_h^{s}\phi \to (-\Delta)^{s}\phi$\quad as\quad $h\to0^+$\quad
 locally uniformly for $\phi\in C^2\cap C_b(\R)$.}
 \end{equation}
 \begin{equation}
\label{as:m} \tag{$A_{\textup{m}}$} \textup{$\exists\,  C_s>0$ such
  that}\,  \sum_{k\not=0} \omega_k \leq C_s h^{-2s}, \,  \sum_{ |kh|>1} \omega_k \leq C_s,
 \, \sum_{0<
  |kh|\leq1} |hk|\omega_k \leq C_s \left\{ \begin{split}&h^{1-2s} \hspace{6.24mm}   \textup{if} \, \,\,  s>\tfrac{1}{2}\\
 & |\log(h)| \, \, \, \, \, \textup{if} \, \,\,    s=\tfrac{1}{2}\\
 & 1  \hspace{13mm}  \textup{if} \, \,\,  s<\tfrac{1}{2}.
\end{split}\right.
\quad   \end{equation}
Such discretizations can be found e.g. in
\cite{dTEnJa18,dTEnJa19}, see Appendix \ref{app:disfl} for examples and comments.
\medskip

\noindent (ii) We discretize $\partial_x\phi$ by up-wind finite differences
\begin{equation}\label{eq:discDer}
 D_h\phi(x):= \left\{ \begin{array}{ll}
 \displaystyle\frac{\phi(x+h)-\phi(x)}{h}    &\text{if }\quad  (-\Delta)_h^{s}\phi(x) \leq 0, \\[2mm]
 \displaystyle  \frac{\phi(x)-\phi(x-h)}{h} &\text{if } \quad (-\Delta)_h^{s}\phi(x) > 0.
    \end{array}\right.
\end{equation}
This approximation is 
consistent, $D_h\phi(x)\to \partial_x\phi(x)$ locally uniformly as $h\to 0$ for $\phi\in C^1$. 
\medskip

\noindent (iii) The discretization of the quasilinear operator \eqref{eq:op} is then given by
\begin{equation}\label{eq:discop}
L_h^s[\phi](x)=- \max\{D_h\phi(x),0\}^{m-1} \flap_h \phi(x).
\end{equation}
 Below we will see that this approximation is monotone. 
Note that $\max\{D_h\phi(x),0\}^{m-1}=|D_h\phi(x)|^{m-1}$ when $\phi$ is nondecreasing. Under assumption \eqref{as:cons}, the  approximation is  therefore  conditionally  consistent in the sense that
\begin{align}
\textup{$L_h^{s}[\phi] \to L^{s}[\phi]$\quad as\quad $h\to0^+$\quad
 locally uniformly for {\em nondecreasing} $\phi\in C^2\cap C_b(\R)$.} 
  \end{align}
 In this paper solutions of \eqref{eq:integP} and its approximation will always be nondecreasing (they are cumulative density functions). 
\medskip

\subsubsection*{Grid and discretization of full integrated problem}
 Let $J\in\N$, $\tau=\frac TJ>0$, and define
equidistant grids
\begin{align}
  \mathcal{T}_{\tau}=\{t_j:=j\tau\}_{j=0}^{J} \qquad\text{and} \qquad h\Z =\{z_i:=h i: i\in \Z\}.
      \end{align}
On this grid we discretize
\eqref{eq:integP}-\eqref{eq:integPini} in space by $L_h^s$ and in time
by the forward Euler method,  where we interpret $L_h^s$ as an operator on grid functions $\phi:h\Z^d\to \R$, 
$L_h^s[\phi]_i=- \max\{D_h\phi_i,0\}^{m-1} \flap_h \phi_i$ for 
\begin{equation}\label{eq:defopdisc}
\flap_h \phi_i= \sum_{k\not=0} (\phi_i-\phi_{i+k})\omega_k\quad \text{and}\quad D_h\phi_i:= \left\{ \begin{array}{ll}
 \displaystyle\frac{\phi_{i+1}-\phi_i}{h}    &\text{if }\quad  (-\Delta)_h^{s}\phi_i \leq 0, \\[2mm]
 \displaystyle  \frac{\phi_i-\phi_{i-1}}{h} &\text{if } \quad (-\Delta)_h^{s}\phi_i > 0.
    \end{array}\right.
\end{equation}

The numerical approximation of \eqref{eq:integP}-\eqref{eq:integPini} is a function $V:h\Z \times \mathcal{T}_{\tau} \to \R$ given by
\begin{align}\label{eq:nums}\tag{S}
  V_i^{j+1}&=V_i^j + \tau L_h^s [V_\cdot^{j}]_i, && i\in \Z, \quad j\in \N,\\
  V_i^0 &=v_0(x_i),&& i\in\Z,\tag{IC$_S$}\label{eq:numic} 
\end{align}

To ensure monotonicity and stability of the numerical scheme, we need
CFL-conditions, and these conditions depend on the regularity of the initial
data:
\begin{align}\label{as:CFL1}\tag{CFL1}
&\textup{If $\|v_0\|_\infty\leq M$}, && \text{then\quad $\tau \leq C_1 h^{2s+m-1}$}&& \text{where\quad $C_1^{-1}=C_s m (2M)^{m-1}$}.\\
\label{as:CFL2}\tag{CFL2}
&\textup{If $\|v_0\|_\infty\leq M$, $\|\partial_x
v_0\|_\infty\leq L$}, && \text{then\quad $\tau \leq C_2 h^{2s\vee1}$} f_s(h)&& \text{where\quad
$C_2^{-1}=C_s m L^{m-2}  L\vee (2M)$}
\end{align}
with $f_s(h)=1$ if $s\not=\frac{1}{2}$ and $f_s(h)=\frac{1}{|\log(h)|}$ if $s=\frac{1}{2}$. 
These conditions give constraints on $\tau$ in terms of $h$, where
  \eqref{as:CFL1} is more restrictive than
  \eqref{as:CFL2}. The improvement in \eqref{as:CFL2} is due to
  better regularity of solutions. When $s< \frac{1}{2}$,  the highest
  order term in the equation is $v_x$, and \eqref{as:CFL2}
  imposes $\tau =O(h)$, the hyperbolic scaling. When $s>\frac{1}{2}$,
  \eqref{as:CFL2} is consistent with the optimal fractional parabolic
  scaling, $\tau = O(h^{2s})$. 
 We also note that in general the scheme \eqref{eq:nums} is consistent with the equation
\begin{align}\tag{IP'}\label{eq:integPp}v_t+ \max\{v_x,0\}^{m-1} (-\Delta)^{s}v=0.
\end{align}
However, for the nondecreasing solutions considered in this paper, this equation coincides with \eqref{eq:integP} (see Lemma \ref{lem:coincide} below).

\subsubsection*{Approximation of solutions of the original problem} From
the approximation $V$ of \eqref{eq:integP}-\eqref{eq:integPini} we get
an approximation $U$ of \eqref{eq:maineq}-\eqref{eq:maineqini} by numerical differentiation:
\begin{align}\label{defU}
U_i^j= \frac{V_i^j-V_{i-1}^j}{h} \qquad\text{for}\qquad i\in\Z,\ j=1,\dots,J,
\end{align}
or equivalently
 \begin{align}\label{defU2}
 V_i^j =h\sum_{k=-\infty}^iU_k^j \qquad\text{for}\qquad  j=1,\dots,J.
\end{align}
Note that $\|\mu_0\|_1\leq M$ and $\|\mu_0\|_\infty\leq L$ immediately
implies that $\|v_0\|_\infty\leq M$ and $\|\partial_x v_0\|_\infty\leq
L$.  See the end of this section for an interpretation of this
approximation scheme for $u$ as an conservative upwind finite volume scheme for \eqref{eq:maineq}.
\bigskip

We now present our main results, the convergence of the schemes. Other
properties of the scheme and a priori estimates will be given in 
Section \ref{sec:propNumSch} and 
Section \ref{sec:propUandconv}.
We need piecewise linear and piecewise constant space-time
interpolants $\overline{V},\overline{U}:\overline{Q}_T\to\R$ of approximate solutions $V$,
$U$ of \eqref{eq:nums}, \eqref{defU}:
\begin{align*}
\overline{V}_h(x,t) & = \frac{x_i-x}{h} V_{i-1}^j +
\frac{x-x_{i-1}}{h} V_i^j  &&\textup{for} \quad
(x,t)\in[x_{i-1},x_i)\times[t_j,t_{j+1}), \\[0.2cm]
\overline{U}_h(x,t)& =U_i^j &&\textup{for} \quad (x,t)\in[x_{i-1},x_i)\times[t_j,t_{j+1}).
\end{align*}
\begin{remark}\label{rem:intutov}
We immediatly have $\partial_x \overline{V}_h=\overline{U}$ a.e. and
in $\mathcal D'$. Moreover, in Section \ref{sec:propUandconv}, we show 
  \[
\overline{V}_h(x,t)=\int_{-\infty}^x \overline{U}_h(y,t) \dd
y\qquad\text{for}\qquad (x,t) \in \overline{Q}_T.
\]
\end{remark}

We give our main convergence results for nondecreasing solutions
of the scheme for the integrated problem
\eqref{eq:integP}-\eqref{eq:integPini}. There are three different results
depending on the regularity of the initial data: (i) $v_0\in BUC(\R)$,
(ii) $v_0\in W^{1,\infty}(\R)$, and (iii) $v_0$ a discontinuous
step-function. 

\begin{theorem}\label{thm:main}
Assume $s\in(0,1)$, $m\geq2$,  $v_0$ nondecreasing, \eqref{as:cons}, 
\eqref{as:m}, and $\{V_h\}_{h>0}$ is a family of solutions of the scheme
\eqref{eq:nums}-\eqref{eq:numic}.
\medskip

\noindent (a) (Continuous case) Assuming $v_0\in BUC(\R)$
 and \eqref{as:CFL1}
 (resp. $v_0\in W^{1,\infty}(\R)$ and \eqref{as:CFL2}), then:
 \smallskip
\begin{enumerate}
\item[(i)]\label{thm:main-item1} (Convergence) There exists a limit $v\in BUC(\overline Q_T)$
  ($v\in C([0,T];W^{1,\infty}(\R))$) such that
 $$\overline{V}_h\stackrel{h\to0}{\longrightarrow}v \qquad \textup{ locally uniformly\qquad in } \overline{Q_T}.$$
\item[(ii)]\label{thm:main-item1b} (Limit) The limit $v$
   from part (i) is the unique viscosity solution of \eqref{eq:integP}-\eqref{eq:integPini}. 
\end{enumerate}
\medskip

\noindent (b) (Jump discontinuity) We assume \eqref{as:CFL1} and for $M>0$
and $a\in \R$,
\[
v_0(x)= 
\begin{cases}
0 & \text{if} \quad x<a,\\
M & \text{if} \quad x\geq a.
\end{cases}
\]
\begin{enumerate}
\item[(i)]\label{thm:main-item1} (Convergence) There exists a limit
  $v\in C_b(\overline Q_T\setminus\{(a,0)\})$,  such that
 $$\overline{V}_h\stackrel{h\to0}{\longrightarrow}v \qquad \textup{ locally uniformly\qquad in } \overline{Q_T}\setminus\{(a,0)\}.$$
\item[(ii)]\label{thm:main-item1b} (Limit) The limit $v$
   from part (i) is the unique discontinuous viscosity solution of \eqref{eq:integP}-\eqref{eq:integPini}. 
\end{enumerate}
\end{theorem}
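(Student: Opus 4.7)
The plan is to apply the Barles-Perthame-Souganidis half-relaxed limit method, which reduces the convergence analysis to three ingredients: uniform stability of $\overline{V}_h$ independent of $h$, monotonicity of the one-step map $V^j\mapsto V^{j+1}$ defined by \eqref{eq:nums}, and consistency of $L_h^s$ with the operator $L^s$ of \eqref{eq:op} on smooth test functions. Consistency has already been recorded after \eqref{eq:discop}; the other two ingredients are the substance of Section \ref{sec:propNumSch}. Once these are in hand, the comparison principles of Theorem \ref{thm:strongCP} and Theorem \ref{thm:l1linfv} (for case (a)) and Theorem \ref{thm:v} (for case (b)) will upgrade the standard half-relaxed-limit inequalities to equality, and hence to locally uniform convergence.

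First I would establish the a priori estimates under the relevant CFL condition. Propagation of $x$-monotonicity is the key structural property: if $v_0$ is nondecreasing then the upwinding rule \eqref{eq:discDer} is consistent with the sign of $\flap_h V^j$, and an induction on $j$ gives $V_{i+1}^j\geq V_i^j$ for all $i,j$. This monotonicity controls $|D_h V_i^j|$ by $(2M/h)$ under \eqref{as:CFL1} or by $L$ under \eqref{as:CFL2}, which is what makes the scheme itself monotone: expanding $V_i^{j+1}$ as a linear combination of the $V_k^j$'s, the coefficient of each off-diagonal term equals $\tau |D_h V_i^j|^{m-1}\omega_{k-i}\geq 0$, and the diagonal coefficient is $1-\tau|D_h V_i^j|^{m-1}\sum_{k\neq 0}\omega_k$, which is nonnegative precisely under the chosen CFL constants (using \eqref{as:m}). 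Monotonicity of the scheme then yields $\|V^j\|_\infty\leq\|v_0\|_\infty$ by comparing with constant sub/supersolutions, and in the Lipschitz case, a translation argument on $V^j$ gives $|V_i^j-V_{i-1}^j|/h\leq\|\partial_x v_0\|_\infty$, so that $\overline{V}_h$ is uniformly bounded (and equi-Lipschitz in $x$ in the $W^{1,\infty}$ setting).

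With these bounds I would define the half-relaxed limits
\[
\overline{v}(x,t)=\limsup_{h\to 0,\,(y,\tau)\to(x,t)}\overline{V}_h(y,\tau), \qquad \underline{v}(x,t)=\liminf_{h\to 0,\,(y,\tau)\to(x,t)}\overline{V}_h(y,\tau),
\]
which are, respectively, bounded upper- and lower-semicontinuous on the appropriate domain ($\overline{Q}_T$ in case (a), $\overline{Q}_T\setminus\{(a,0)\}$ in case (b)). A standard BPS argument — pick a smooth test function touching $\overline{v}$ from above at an interior point, perturb to a strict maximum of $\overline{V}_h-\varphi$ at some grid point, use scheme monotonicity to transfer the touching to the scheme evaluated at $\varphi$, and pass to the limit via consistency — shows that $\overline{v}$ is a viscosity subsolution and $\underline{v}$ a viscosity supersolution of \eqref{eq:integP}. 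For the initial trace in case (a), uniform continuity of $v_0$ (or $W^{1,\infty}$-equicontinuity of $\overline{V}_h$) gives $\overline{v}(\cdot,0)=\underline{v}(\cdot,0)=v_0$. Invoking the comparison principle then yields $\overline{v}\leq\underline{v}$ on the whole domain, which combined with $\underline{v}\leq\overline{v}$ forces equality and the usual reformulation produces locally uniform convergence to the common value $v$, which is the unique viscosity solution of \eqref{eq:integP}-\eqref{eq:integPini}.

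For case (b) the only modifications concern the initial data: $v_0$ jumps at $x=a$, so uniform convergence at $(a,0)$ cannot hold, and we have to work in the discontinuous-viscosity-solution framework. Away from $x=a$, $v_0$ is locally constant and uniform attainment of the initial data still holds for $\overline{V}_h$. The appropriate comparison principle (Theorem \ref{thm:v} in conjunction with the discontinuous comparison in Section \ref{sec:theoresults}) tolerates the single jump, giving $\overline{v}\leq\underline{v}$ on $\overline{Q}_T\setminus\{(a,0)\}$. The main obstacle throughout is the proof of scheme monotonicity and its delicate compatibility with the upwinding \eqref{eq:discDer}: one must show simultaneously that the $x$-monotonicity of $V^j$ is preserved (so that the upwind direction is consistent with the sign of $\flap_h V^j$), that the quasilinear coefficient $|D_h V|^{m-1}$ is controlled by the CFL constant, and that the resulting one-step map is genuinely order-preserving. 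Once this triad is secured the BPS machinery proceeds essentially automatically.
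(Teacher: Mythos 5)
Your overall strategy — propagate $x$-monotonicity, prove the one-step map is order-preserving under a CFL condition, show stability, invoke consistency, and close via Barles--Perthame--Souganidis half-relaxed limits plus the comparison principles of Theorems \ref{thm:strongCP} and \ref{thm:comppp2} — is exactly the route the paper takes (Section \ref{sec:propNumSch}: Lemma \ref{lem:mono1}, Theorem \ref{theo:mono}, Corollary \ref{coro:prese}, Lemma \ref{lem:consistency}, Theorem \ref{thm:BS}, Corollaries \ref{coro:comvinteg1}--\ref{coro:comvdirac}). However, there is a substantive gap in the step you single out as ``the main obstacle.''

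Your argument for monotonicity of the one-step map is the linear-scheme argument: rewrite $V_i^{j+1}$ as ``diagonal coefficient $1-\tau|D_hV_i^j|^{m-1}\sum_{k\neq0}\omega_k$ times $V_i^j$ plus nonnegative off-diagonal coefficients times the $V_{i+k}^j$,'' and impose the CFL constraint so the diagonal coefficient is nonnegative. This is valid for a linear scheme with frozen coefficients, but $\Se[\phi]_i=\phi_i-\tau|D_h\phi_i|^{m-1}(-\Delta)_h^s\phi_i$ is genuinely nonlinear in $\phi$: the coefficient $|D_h\phi_i|^{m-1}$ itself depends on $\phi$, so positivity of the frozen coefficients does not by itself give $\phi\leq\psi\Rightarrow\Se[\phi]\leq\Se[\psi]$. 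Comparing $\Se[\phi]_i-\Se[\psi]_i$ produces a cross term of the form $\tau\bigl(|D_h\phi_i|^{m-1}-|D_h\psi_i|^{m-1}\bigr)\sum_k(\psi_{i+k}-\psi_i)\omega_k$, which must be estimated via the convex inequality $|a|^p-|b|^p\le p\,a^{p-1}|a-b|$ with $p=m-1$. This is precisely where the hypothesis $m\geq 2$ enters (it fails for $p<1$), and it is also what forces the extra factor $m$ in the CFL constants $C_1^{-1}=C_s\,m\,(2M)^{m-1}$ and $C_2^{-1}=C_s\,m\,L^{m-2}(L\vee 2M)$. Your naive diagonal bound only yields $\tau\leq h^{2s+m-1}/(C_s(2M)^{m-1})$, which is weaker by the factor $m$ and insufficient. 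The paper's Theorem \ref{theo:mono} further requires a four-way case split on the signs of $(-\Delta)_h^s\phi_i$ and $(-\Delta)_h^s\psi_i$, which is what makes the upwinding rule \eqref{eq:discDer} interact correctly with the comparison argument; none of this is captured by a diagonal/off-diagonal count.

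Two smaller points you should also address: the BPS step requires restricting to nondecreasing test functions (the scheme is only monotone on nondecreasing grid functions), which is justified by Lemma \ref{lem:inctestfunc}; and since the time interpolant $V_h$ is only right-continuous in $t$ (not USC/LSC), the standard ``transfer the touching to the scheme'' step needs the $\eta_h$ perturbation built into the consistency Lemma \ref{lem:consistency}.
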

This result is proved using monotonicity arguments, viscosity solutions, and half-relaxed
limits, see Section \ref{sec:propNumSch} for the details and
slightly more general results.

\begin{remark}
In addition to proving convergence, even in the discontinuous case,
this result also proves the existence (and continuity) of
(discontinuous) nondecreasing viscosity solutions of
\eqref{eq:integP}-\eqref{eq:integPini}. However, in the discontinuous 
case, the proof uses the existence of
a fundamental solution of \eqref{eq:maineq}. 
  \end{remark}

Now we give the convergence results for the approximations of
nonnegative solutions of the fractional pressure equation
\eqref{eq:maineq}-\eqref{eq:maineqini}. The convergence obtained is
given in terms of the Rubinstein-Kantorovich distance defined for 
nonnegative functions $f_1,f_2\in L^1(\R)$  with the same total
mass as 
\[
d_0(f_1,f_2)=\sup_{\varphi \in \textup{Lip}_{1,1}(\R)} \int_{\R}( f_1(x)-f_2(x))\varphi(x) \dd x,
\]
where $\textup{Lip}_{1,1}(\R)=\{\varphi\in C(\R):
\|\varphi\|_{L^\infty}\leq 1,\ \|D\varphi\|_{L^\infty}\leq 1\}$ and
$D$ is the weak derivative.

There are three types of results,
depending on the regularity of the initial data $\mu_0$: (i) $\mu_0\in
L^1$, (ii) $\mu_0\in
L^1\cap L^\infty$, and (iii) $\mu_0$ a point mass and $u$ a fundamental solution.
\begin{theorem}\label{thm:main2}
Assume  $\sigma\in(0,1)$, $s=1-\sigma$,  $m\geq2$, $\mu_0$ nonnegative, \eqref{as:cons},
\eqref{as:m}, $\{V_h\}_{h>0}$ a family of solutions of the scheme
\eqref{eq:nums}-\eqref{eq:numic} with $v_0=\int_{-\infty}^x
\mu_0(y)\dd y$, $U_h$ is the numerical derivative of
$V_h$ in \eqref{defU}, and $u$ the weak solution of
\eqref{eq:maineq}-\eqref{eq:maineqini}. 
\smallskip

\begin{enumerate}[(a)]
\item\label{thm:main-item21} ($L^1$ case) Assume $\mu_0\in L^1(\R)$ and
\eqref{as:CFL1} (resp. $\mu_0\in
L^1(\R)\cap L^\infty(\R)$ and \eqref{as:CFL2}), then $0\leq u\in
L^\infty([0,T];L^1(\R))$ (resp. $0\leq u\in
  L^\infty([0,T];L^1\cap L^\infty(\R))$)   and we have
\begin{equation}\label{eq:weakconv}
\sup_{t\in[0,T]} d_0(\overline{U}_h(\cdot, t),u(\cdot,t)) \stackrel{h\to0}{\longrightarrow} 0.
\end{equation}  

\item\label{thm:main-item22} (Fundamental solution) Assume $\mu_0=M
  \delta_a$ and \eqref{as:CFL1}, then $0\leq u\in
  L^\infty((0,T];L^1(\R))$   and  
\begin{equation}\label{eq:weakconv2}
\sup_{t\in[0,T]}d_0(\overline{U}_h(\cdot, t),u(\cdot,t)) \stackrel{h\to0}{\longrightarrow} 0.
\end{equation}
\end{enumerate}
\end{theorem}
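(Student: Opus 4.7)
The plan is to reduce the weak-$*$ convergence of $\overline{U}_h$ to the local uniform convergence of $\overline{V}_h$ from Theorem \ref{thm:main}, exploiting that the relation $\overline{V}_h(x,t) = \int_{-\infty}^x \overline{U}_h(y,t)\,dy$ of Remark \ref{rem:intutov} is precisely the cumulative distribution relation. This is a discrete analogue of the probabilistic fact that pointwise convergence of cumulative distribution functions implies weak convergence of the associated densities. Concretely, for $\varphi \in \textup{Lip}_{1,1}(\R)$ I integrate by parts to get
\[
\int_\R \varphi(x)\,(\overline{U}_h - u)(x,t)\,dx = -\int_\R \varphi'(x)\,(\overline{V}_h - v)(x,t)\,dx,
\]
with boundary contributions at $\pm\infty$ vanishing provided $\overline{U}_h(\cdot,t)$ and $u(\cdot,t)$ have the same total mass $M:=\mu_0(\R)$; note that $v=\int_{-\infty}^x u(y,t)\,dy$ by Theorems \ref{thm:l1linfv} and \ref{thm:v}. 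Write $I=[0,T]$ in case (a) and $I=[t_0,T]$ in case (b).

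The argument then relies on two discrete ingredients. First, mass conservation for $\overline{U}_h$, which reduces to showing $\overline{V}_h(x,t)\to 0$ as $x\to-\infty$ and $\overline{V}_h(x,t)\to M$ as $x\to+\infty$ for every $t\in I$. The upwind structure of $D_h$ is tailored precisely for this: when the relevant upwind difference of $V$ vanishes near $x_i$ (as happens in the tails, where $V_i^0$ is essentially constant), $L_h V_i^j = 0$ and the scheme is inert there; combined with the $L^\infty$-stability bound $\|V_h\|_\infty\le M$, the asymptotic values $v_0(\pm\infty)$ are preserved under each Euler step. Second, uniform tightness of $\{\overline{U}_h\}$ in $h$ and $t\in I$: writing $\int_R^\infty \overline{U}_h(y,t)\,dy = M - \overline{V}_h(R,t)$, using tightness of $u$ from Theorem \ref{thm:maineq}\eqref{thm:maineq-itemtight} (whose proof relies on finite speed of propagation \eqref{thm:maineq-itemFSP}) to pick $R$ large so that $v(\pm R,t)$ is $\varepsilon$-close to $0$ or $M$ uniformly in $t\in I$, and finally applying the local uniform convergence $\overline{V}_h\to v$ on the compact set $\{\pm R\}\times I$ to extend the bound to $\overline{V}_h$ for $h$ small.

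With these in hand, I conclude via a smooth cutoff $\chi_R$ with $\chi_R=1$ on $[-R,R]$, $\supp\chi_R\subset[-R-1,R+1]$, and $\|\chi_R'\|_\infty\le 1$. Splitting
\[
\int_\R \varphi(\overline{U}_h - u)\,dx = \int_\R (1-\chi_R)\varphi(\overline{U}_h - u)\,dx + \int_\R \chi_R\varphi(\overline{U}_h - u)\,dx,
\]
the first piece is bounded by $\|\varphi\|_\infty\cdot 2\varepsilon$ by the tightness of $\overline{U}_h$ and $u$, while integration by parts rewrites the second as $-\int(\chi_R\varphi)'(\overline{V}_h - v)\,dx$, supported on $[-R-1,R+1]$ and hence controlled by $2(R+1)\,\|\overline{V}_h - v\|_{L^\infty([-R-1,R+1]\times I)}\to 0$ by Theorem \ref{thm:main}. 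Taking the supremum over $\varphi\in\textup{Lip}_{1,1}(\R)$ and then over $t\in I$ yields the desired $d_0$-convergence. The main obstacle is the uniform-in-$h$ tightness together with the mass conservation, since the scheme is not an obvious local conservation law for $U$ and both properties must be routed through $V$; in case (b) an additional subtlety is that $\overline{V}_h\to v$ holds only away from the jump point $(a,0)$, which is exactly what forces the restriction to $t\ge t_0>0$.
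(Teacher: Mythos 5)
Your proposal follows essentially the same route as the paper's proof: verifying the cumulative relation $\overline{V}_h=\int_{-\infty}^x\overline{U}_h$, integrating by parts against compactly supported Lipschitz test functions (with boundary terms killed by discrete mass conservation), establishing tightness of $\overline{U}_h$ by routing it through the local uniform convergence $\overline{V}_h\to v$ and the tightness of $u$, and closing the argument with a smooth cutoff decomposition. One small imprecision: the preservation of the limits $0$ and $M$ at $\pm\infty$ (Lemma \ref{lem:limits}) does not rest on the upwind difference literally vanishing in the tails, but on monotonicity and boundedness of $V_i^j$ together with the scaling bounds of \eqref{as:m}, which make $\tau L_h^s V_i^j\to 0$ as $i\to\pm\infty$.
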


\begin{remark}\label{rem:linftest}
  (a) The $d_0$-distance metrizes weak ($*$) convergence in the space
of probability measures $P(\R)$. It differs (slightly) from the Wasserstein-1 distance
$d_1$ which also implies convergence of first-moments and requires possibly unbounded test function $\varphi$ with
$\|D\varphi\|_{L^\infty}\leq 1$.
  \medskip
  
\noindent (b) The convergence results in Theorem \ref{thm:main2} are equivalent (up to normalization) to
  convergence in the (complete metric) spaces $C([0,T]; P(\R))$. 
  \medskip
  
\noindent (c) When $\mu_0\in L^1(\R)\cap L^\infty(\R)$, we get better
convergence in space. A simple
density argument shows that we have 
convergence for test functions $\varphi\in L^\infty(\overline{Q_T})$: For a.e. $t\in[0,T]$,
\[
\left|\int_{\R} \left(\overline{U}_h(x,t)-u(x,t)\right)\varphi(x,t) \dd x\right| \stackrel{h\to0}{\longrightarrow} 0 \quad \textup{for all} \quad \varphi\in L^\infty(\overline{Q_T}).
\]
\end{remark}

\begin{remark}
(a) Theorem \ref{thm:main2} will follow as a consequence of Theorem
  \ref{thm:main}  and tightness arguments,  after taking
  $v_0(x)=\int_{-\infty}^x\dd\mu_0(y)$ and identifying $\overline U_h$
  and $u$ as derivatives of $\overline V_h$ and $v$.
 \medskip 
  
\noindent (b) With $v_0(x)=\int_{-\infty}^x\dd\mu_0(y)$, (a) - (b)
in Theorem \ref{thm:main2} are consequences of (a) - (b) in Theorem
\ref{thm:main}, see Section \ref{sec:propUandconv} for the detailed
proof.
\medskip

\noindent(c) As opposed to Theorem \ref{thm:main}, existence of
solutions does not follow, even if we have a limit and candidate
$u=\partial_x v$. We lack energy discrete estimates for approximate
solution to conclude that $\partial_x v$ is the weak solution of
\eqref{eq:maineq}-\eqref{eq:maineqini}.  
  \end{remark}

\subsection{An upwind finite volume scheme for \eqref{eq:maineq}}\label{sec:FVM}

Equation
\eqref{eq:maineq} is a conservation law of the form 
\begin{align}\label{CL}
u_t=- \nabla \cdot \mathfrak{F} \qquad \textup{for some flux} \quad \mathfrak{F}.
\end{align}
A (mass preserving) finite volume approximation of \eqref{CL} takes the following form in $d=1$,
\begin{align}\label{FVM}
U_i^{j+1}=U_i^j - \tau
\frac{F_{i+\frac{1}{2}}^j-F_{i-\frac{1}{2}}^j}{h}\qquad \textup{for
  some numerical flux} \quad F.
\end{align}
When $\mathfrak{F}= u \phi$ for
some velocity field $\phi$, the up-wind scheme \cite[Example
  1.1]{EGH00} follows from the choice
\[
F_{i+\frac{1}{2}}^j= U_{i}^j \big(\Phi_{i+\frac{1}{2}}^j\big)_+  + U_{i+1}^j \big(\Phi_{i+\frac{1}{2}}^j\big)_{-}, 
\]
where  $a_\pm=\pm\max\{\pm a,0\}$ and $\Phi$  is some numerical
approximation of $\phi$. The choice of $\Phi$
has a strong influence on the properties of the numerical method. We
refer to \cite{EGH00}  for details (see also \cite{CP04,BF12,CCH15,BCMS20}).

Our numerical scheme given by
\eqref{schemeVintro}-\eqref{schemeVintroini} and \eqref{SchemeUintro}
can be interpreted in a similar way. In our case
$\mathfrak{F}=u^{m-1}\phi$ for \(
\phi= - \nabla^{1-2\sigma} u= (-\Delta)^{s} v\) and $s=1-\sigma$, and
our discretization is of the form \eqref{FVM} with
\[
F_{i+\frac{1}{2}}^j= (U_{i}^j)^{m-1}
\big(\Phi_{i+\frac{1}{2}}^j\big)_+  + (U_{i+1}^j)^{m-1}
\big(\Phi_{i+\frac{1}{2}}^j\big)_{-}\qquad \text{and}\qquad
\Phi_{i+\frac{1}{2}}^j= (-\Delta_h)^s V_{i}^j.
\]
We can always write $\Phi$ in terms of $U$, and then we get a direct
approximation of $\phi$. Discretizing $(-\Delta)^s$ with \emph{powers of the discrete Laplacian} $(-\Delta_h)^s$ of (see \cite{Cia-etal18}
and Appendix \ref{app:disfl}), the resulting $\Phi$
becomes particularly simple
and suggestive: 
\[
(-\Delta_h)^s V_{i}^j = (-\Delta_h) (-\Delta_h)^{s-1} V_{i}^j= -D_h^+ D_{h}^- (-\Delta_h)^{-\sigma} V_{i}^j = -D_h^+  (-\Delta_h)^{-\sigma} D_{h}^- V_{i}^j  = -D_h^+  (-\Delta_h)^{-\sigma} U_i^{j},
\]
where $D^{\pm}_h$ are forward and backward finite difference approximations of $\nabla$ in dimension $d=1$.

\section{Viscosity solutions and comparison for the
  integrated problem
}\label{sec:theoresults}
Equation \eqref{eq:integP} is a degenerate quasi-linear PDE in
nondivergence form. It is known to be well-posed in the sense of
viscosity solutions. We refer to \cite{BiKaMo10} for the case $m=2$,
to \cite{StTeVa16} for $m\geq2$, and to \cite{ChJa17} for a more
general viscosity solution theory in multi-d that includes
\eqref{eq:integP} as a special case. In this section we define the
concepts of viscosity solutions and discontinuous viscosity
solutions. We also prove several properties that will be needed for
the convergence proof of our numerical scheme.  Because of later needs, we will state this theory for the (slightly) more general equation
\begin{align}\label{gIP}
 \tag{gIP} \partial_t v + F(\partial_x v) \flap v&=0\qquad \textup{in} \qquad Q_T, 
\end{align}
under the assumption 
\begin{align}\label{cond:F}
\tag{F}
0\leq F\in C(\R).    
\end{align}
\begin{remark}\label{rem:moregeneraldiffusioncoef}
We are interested in the two cases 
$F(\phi_x)=|\phi_x|^{m-1}$ (corresponding to equation \eqref{eq:integP}) and
$F(\phi_x)=\max\{\phi_x,0\}^{m-1}$ (corresponding to \eqref{eq:integPp} in Section \ref{sec:propNumSch}), which both trivially satisfy \eqref{cond:F} for $m\geq 1$. 
\end{remark}

We recall the concept of upper (lower) semicontinuous envelopes of locally bounded functions $f$:
\[
f^*(\xi)=\limsup_{\eta \to \xi} f(\eta) \quad \textup{and} \quad f_*(\xi)=\liminf_{\eta \to \xi} f(\eta).
\]
If $f$ is upper (lower) semicontinuous, then $f^*=f$ ($f_*=f$).

\begin{definition}\label{def:viscsol1}
 Assume  $s\in(0,1)$, \eqref{cond:F},  
and $v_0:\R\to\R$ be a bounded function. 

\begin{enumerate}[\noindent\rm (a)]
\item A bounded \textup{USC} (resp. \textup{LSC}) function $v:\overline{Q_T}\to \R$ is a \emph{viscosity subsolution} (resp. \emph{supersolution}) of 
\eqref{gIP}-\eqref{eq:integPini} if: 
\begin{enumerate}[\rm (i)]

\item  For any $\phi \in C^2\cap C_b (Q_T)$ and global maximum
 (resp. minimum) $(x_0,t_0)$ in $Q_T$ of $v-\phi$, 
\begin{equation}\label{eq:viscInt}
\phi_t(x_0,t_0)+ 
 F(\phi_x(x_0,t_0))(-\Delta)^{s}\phi (x_0,t_0) \leq 0 \quad \text{(resp. $\geq0$)}.
\end{equation}

\item For any $\phi \in C^2 \cap C_b (\overline{Q_T})$ and global
maximum (resp. minimum) $(x_0,0)$ in $\overline{Q_T}$ of $v-\phi$,
\begin{equation}\label{eq:viscIniCond}
\begin{split}
&\min\big\{\ v(x_0,0)-(v_0)^*(x),\ \phi_t(x_0,0)+ 
 F(\phi_x(x_0,0))
(-\Delta)^{s}\phi (x_0,0)\ \big\} \leq 0. \\
 \quad (\text{resp. }  &\max\big\{\ v(x_0,0)-(v_0)_*(x),\ \phi_t(x_0,0)+ 
  F (\phi_x(x_0,0))
(-\Delta)^{s}\phi (x_0,0)\ \big\} \geq 0.)
  \end{split}
\end{equation}
\end{enumerate}

\item A function $v\in  C_b (\overline Q_T)$ is a \emph{viscosity solution} if it is both a viscosity sub- and
supersolution.
\medskip

\item A bounded function $v$ is a \emph{discontinuous viscosity solution} if $v^*$ is a
viscosity subsolution and $v_*$ is a viscosity supersolution.
\end{enumerate}
\end{definition}

\begin{remark}\label{rem:testfunc}
 Without loss of generality, we can assume that $v(x_0,t_0)=\phi(x_0,t_0)$ in the previous definition. It then follows that  $v\leq\phi$ (resp. $v\geq\phi$). In fact we can take $\phi$ such that $v(x,t)<\phi(x,t)$  (resp. $v(x,t)>\phi(x,t)$) for $(x,t)\not=(x_0,t_0)$  and $v-\phi <1$  (resp. $v-\phi >1$) outside a neighborhood of  $(x_0,t_0)$.
\end{remark}

As the following result asserts, viscosity solutions in the sense of
Definition \ref{def:viscsol1} take initial conditions in the usual
sense. This is a simple (nonlocal) adaptation of a result in
\cite{Bar94}.

\begin{lemma}\label{lem:inidialData}
Assume  $s\in(0,1)$,  \eqref{cond:F},  
$v_0$ bounded, and $v$ is a bounded viscosity
subsolution (resp. supersolution) of
\eqref{gIP}-\eqref{eq:integPini}. Then 
\begin{equation}
v(x,0) \leq (v_0)^*(x) \quad \textup{(resp. $v(x,0)\geq (v_0)_*(x))$}.
\end{equation}
\end{lemma}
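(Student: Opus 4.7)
The plan is to adapt the penalty-function argument of \cite{Bar94} to our nonlocal quasilinear setting. The subsolution and supersolution statements are symmetric, so I treat only the first. Arguing by contradiction, suppose there exists $\bar x\in\R$ with $\eta:=v(\bar x,0)-(v_0)^*(\bar x)>0$; the goal is to exhibit a test function $\phi$ whose global maximum of $v-\phi$ on $\overline{Q_T}$ violates every clause of Definition~\ref{def:viscsol1}(a).

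The construction is a smooth bounded function peaked at $(\bar x,0)$. Pick $\psi\in C^2_b(\R)$ non-negative, equal to $y^2$ near $y=0$ and bounded below by a positive constant outside a bounded neighbourhood of $0$, and set
\begin{equation*}
  \phi_\varepsilon(x,t):=(v_0)^*(\bar x)+\tfrac{\eta}{2}+\varepsilon^{-2}\psi(x-\bar x)+\varepsilon^{-(m+2)}t.
\end{equation*}
Then $\phi_\varepsilon\in C^2\cap C_b(\overline{Q_T})$, and since $v$ is bounded while $\phi_\varepsilon\to+\infty$ as $|x|\to\infty$ for $\varepsilon$ small, the USC function $v-\phi_\varepsilon$ attains a global maximum on $\overline{Q_T}$ at some $(x_\varepsilon,t_\varepsilon)$. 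Evaluating at $(\bar x,0)$ gives a value $\eta/2>0$, so this maximum is at least $\eta/2$; the standard penalty bound then forces $\varepsilon^{-2}\psi(x_\varepsilon-\bar x)+\varepsilon^{-(m+2)}t_\varepsilon=O(1)$, whence $|x_\varepsilon-\bar x|=O(\varepsilon)$ and $t_\varepsilon=O(\varepsilon^{m+2})$, so that $(x_\varepsilon,t_\varepsilon)\to(\bar x,0)$.

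A direct computation using $\psi\in C^2_b$ yields $\partial_t\phi_\varepsilon=\varepsilon^{-(m+2)}$, $|\partial_x\phi_\varepsilon(x_\varepsilon)|\le 2\varepsilon^{-2}|x_\varepsilon-\bar x|=O(\varepsilon^{-1})$, and $|\flap\phi_\varepsilon|\le\varepsilon^{-2}\|\flap\psi\|_\infty=O(\varepsilon^{-2})$, so the PDE expression $\partial_t\phi_\varepsilon+|\partial_x\phi_\varepsilon|^{m-1}\flap\phi_\varepsilon$ is bounded below by $\varepsilon^{-(m+2)}-C\varepsilon^{-(m+1)}$, which is strictly positive for $\varepsilon$ small. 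If $t_\varepsilon\in(0,T)$, the interior inequality Definition~\ref{def:viscsol1}(a)(i) says this expression is $\le 0$, a contradiction. If $t_\varepsilon=0$, part (ii) forces the other clause $v(x_\varepsilon,0)\le(v_0)^*(x_\varepsilon)$; but the max-condition lower bound reads $v(x_\varepsilon,0)\ge\phi_\varepsilon(x_\varepsilon,0)+\eta/2\ge(v_0)^*(\bar x)+\eta$, so $(v_0)^*(x_\varepsilon)\ge(v_0)^*(\bar x)+\eta$, contradicting the upper semicontinuity of $(v_0)^*$ at $\bar x$ as $\varepsilon\to 0$.

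The main obstacle is calibrating the temporal penalty so that it dominates the nonlocal quasilinear term. Because $\phi_\varepsilon$ is uniformly $C^2_b$, $\flap\phi_\varepsilon$ scales merely as $\varepsilon^{-2}$ regardless of the order $s\in(0,1)$, while the quasilinear prefactor $|\partial_x\phi_\varepsilon|^{m-1}$ contributes $\varepsilon^{-(m-1)}$; the combined $\varepsilon^{-(m+1)}$ demands a time penalty strictly stronger than this, and $\varepsilon^{-(m+2)}$ is the simplest choice that works. A secondary technicality is that $\psi$ must be bounded above (to keep $\phi_\varepsilon\in C_b$) yet bounded below at infinity (so the maximum of $v-\phi_\varepsilon$ is actually attained and localized near $(\bar x,0)$).
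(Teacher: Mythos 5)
Your proof is correct and follows essentially the same penalty-function argument as the paper: a test function of the form $\varepsilon^{-\alpha}\psi(\cdot-\bar x)+\varepsilon^{-\beta}t$ with $\psi\in C^2_b$ peaked at $\bar x$, a temporal penalty strong enough to kill interior and time-derivative maxima, localization of the maximizer near $(\bar x,0)$, and the initial-condition inequality plus upper semicontinuity of $(v_0)^*$ to conclude. The only substantive differences are cosmetic: you frame it as a contradiction whereas the paper argues directly, and you self-calibrate the temporal penalty as $\varepsilon^{-(m+2)}$ rather than fixing $\varepsilon$ and then choosing $C>L_\varepsilon$ large. One small slip: you justify the attainment of the global maximum by saying $\phi_\varepsilon\to+\infty$ as $|x|\to\infty$, which is false since $\psi$ is bounded; the correct reason (which you state yourself at the end) is that for $\varepsilon$ small the penalty $\varepsilon^{-2}\psi$ is bounded below away from $\bar x$ by a constant exceeding $2\|v\|_\infty$, so the supremum is over a compact set on which the USC function attains its maximum.
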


\begin{proof}
  We only give the proof 
for subsolutions since the argument for supersolutions is similar. 
Consider
$$\chi_{x,\epsilon, C}(y,t):=v(y,t)-\phi_{x,\epsilon, C}(y,t),$$
where the test function 
\[
\phi_{x,\epsilon, C}(y,t)= \frac{1}{\epsilon}\psi\left(y-x\right)+Ct,
\]
and $0\leq \psi \in C^2_b(\R)$ is radially nondecreasing
with $\psi(z)=|z|^2$ for $|z|\leq1$ and $\psi\equiv 2$ for $|z|\geq2$.
\smallskip

Take
$0<\epsilon<1/(2\|v\|_{L^\infty(\overline{Q_T})})$. 
For $|x-y|>1$, $\psi(x-y)>1$, and then by the bound on $\epsilon$,
\[ 
v(y,t)-\phi_{x,\epsilon, C}(y,t)\leq v(y,t)-
\frac{1}{\epsilon}\psi\left(y-x\right)<
-\|v\|_{L^\infty(\overline{Q_T})}.
\]
Since \(v(x,0)-\phi_{x,\epsilon, C}(x,0)=v(x,0)\geq-\|v\|_{L^\infty(\overline{Q_T})},
\) we find that $\chi_{x,\epsilon,
  C}(y,t)$ has a  global  maximum in
$\overline{Q_T}$ at a point $(y_0,t_0)$ with $|y_0-x|<1$. Moreover,
\[
v(y_0,t_0)-\phi_{x,\epsilon, C}(y_0,t_0)\geq v(x,0)-\phi_{x,\epsilon, C}(x,0)\geq -\|v\|_{L^\infty(\overline{Q_T})},
\]
so  $\phi_{x,\epsilon, C}(y_0,t_0) \leq
2\|v\|_{L^\infty(\overline{Q_T})}$, and by the definition of $\psi$,
\[
\frac{1}{\epsilon}|y_0-x|^2
+Ct_0=\phi_{x,\epsilon, C}(y_0,t_0) \leq
2\|v\|_{L^\infty(\overline{Q_T})}\qquad\implies\qquad |y_0-x|\leq \big(2\epsilon \|v\|_{L^\infty(\overline{Q_T})}\big)^{\frac12} .
\]
Now we let $\phi=\phi_{x,\epsilon, C}$ and take $C$ large such that
\[
C>L_\epsilon:=\|
 F(\phi_x)(-\Delta)^{s}\phi\|_{L^\infty(\overline
  Q_T)}
\]
where $L_\epsilon<\infty$ since $\phi\in
 C^2_b  (\overline{Q_T})$  and $F\in C(\R)$,  and
it does not depend on $C$ by spatial
differentiation.
Then $t_0=0$, since if $t_0>0$, we get a contradiction from the
subsolution inequality \eqref{eq:viscInt} for $v$:
\begin{equation*}
\begin{split}
0\geq\phi_t(y_0,t_0)+ 
 F(\phi_x(y_0,t_0))(-\Delta)^{s}\phi (y_0,t_0)\geq C - L_\epsilon>0.
\end{split}
\end{equation*} 
Since $t_0=0$ and 
\[
\phi_t(y_0,0)+ 
 F(\phi_x(y_0,0))(-\Delta)^{s}\phi
(y_0,0)\geq C-L_\epsilon>0,
\]
the initial condidition inequality \eqref{eq:viscIniCond} then implies that 
\begin{equation}\label{eq:bcarch}
v(y_0,0)\leq (v_0)^*(y_0).
\end{equation}
Moreover, $(y_0,0)$ is a global maximum of $\chi_{x,\epsilon,C}$, so
\begin{equation}\label{eqn}
\begin{split}
\chi_{x,\epsilon,C}(x,0)\leq  \chi_{x,\epsilon,C}(y_0,0)= v(y_0,0)-\frac{1}{\epsilon}\psi \left(y_0-x \right)\leq  v(y_0,0),
\end{split}
\end{equation}
and then since $y_0\to x$ as $\epsilon\to0$ and $v_0$ is USC,  we can take $\limsup$ in \eqref{eqn} and use \eqref{eq:bcarch} to see that 
\[
v(x,0)= 
 \chi_{x,\epsilon,C}(x,0)  \leq \limsup_{\epsilon\to0} (v_0)^*(y_0)\leq \limsup_{y\to x} (v_0)^*(y) \leq (v_0)^*(x).
\]
The proof is complete.
\end{proof}

As a consequence of the previous result and Theorem 6.1 in \cite{ChJa17}, we have the following strong comparison principle.

\begin{theorem}\label{thm:strongCP}
Assume $s\in(0,1)$, 
 \eqref{cond:F},  $v_0\in BUC(\R)$, and $v$ and $w$ are viscosity
sub and supersolutions of
\eqref{gIP}-\eqref{eq:integPini} respectively. Then  
\[
v\leq w \quad \textup{in} \quad \overline{Q_T}.
\]
\end{theorem}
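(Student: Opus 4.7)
The plan is to reduce the claim to an application of the general strong comparison principle \cite[Theorem 6.1]{ChJa17}, which covers quasilinear nonlocal equations of the form $\partial_t v + F(\partial_x v, (-\Delta)^s v)=0$ with $F(p,\ell)=|p|^{m-1}\ell$, combined with the initial-trace inequality produced by Lemma \ref{lem:inidialData}. No new PDE technique is required: the work is entirely in matching definitions and checking hypotheses, with the implicit (and necessary) ordering $v_0\leq w_0$ being understood.

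First I would apply Lemma \ref{lem:inidialData} to the bounded USC subsolution $v$ and to the bounded LSC supersolution $w$. Since $v_0, w_0\in BUC(\mathbb{R})$ are continuous, their semicontinuous envelopes coincide with themselves, so the lemma yields
\[
v(x,0)\leq v_0(x) \quad\text{and}\quad w(x,0)\geq w_0(x) \qquad\text{for every }x\in\mathbb{R}.
\]
Under $v_0\leq w_0$ this produces the pointwise ordered trace $v(\cdot,0)\leq w(\cdot,0)$, which is precisely the initial datum comparison required as input to the interior comparison result in \cite{ChJa17}.

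Second, I would verify that \eqref{eq:integP} falls within the scope of \cite[Thm.~6.1]{ChJa17}. The nonlinearity $F(p,\ell)=|p|^{m-1}\ell$ is continuous and degenerate elliptic in the nonlocal slot (nondecreasing in $\ell$ in the sign convention of Definition \ref{def:viscsol1}), and the $(-\Delta)^s$ kernel satisfies the structural conditions imposed in \cite{ChJa17}, since quasilinear nonlocal operators of $p$--Laplace type are treated there as a prototypical example. One also needs to reconcile our Definition \ref{def:viscsol1}, which uses $C^2\cap C_b$ test functions with \emph{global} extrema, with the formulation used in \cite{ChJa17}; this is equivalent via a routine truncation argument combined with the standard splitting of the singular integral for $(-\Delta)^s$.

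Invoking \cite[Thm.~6.1]{ChJa17} with the initial trace inequality of Step~1 then delivers $v\leq w$ in $Q_T$, and the same trace inequality extends this to $\overline{Q_T}$. The main obstacle is the verification in Step~2: making sure that all quantitative hypotheses of \cite[Thm.~6.1]{ChJa17}, in particular the precise growth of $F$ in the gradient slot (which vanishes when $\partial_x v=0$ and is responsible for the degeneracy of \eqref{eq:integP}) and the compatibility of the $(-\Delta)^s$ kernel with the doubling--of--variables machinery of the cited reference, are actually met by $F(p,\ell)=|p|^{m-1}\ell$. Once this matching is done, the comparison statement is immediate, which explains why the proof in the paper collapses to a single citation.
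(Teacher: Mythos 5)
Your proof is correct and takes essentially the same approach as the paper: apply Lemma~\ref{lem:inidialData} to obtain the ordered initial trace (using continuity of the data so that envelopes coincide with the data), and then invoke \cite[Theorem~6.1]{ChJa17} for the interior comparison, after recognizing \eqref{eq:integP} as a special case of the quasilinear nonlocal framework there (the paper points to \cite[Section~3.2]{ChJa17} for that identification). You are also right to flag that the ordering $v_0\leq w_0$ is implicit; in fact the paper's proof writes $v(x,0)\leq v_0(x)\leq w(x,0)$, which presumes the sub- and supersolution share (or are ordered at) the initial datum, exactly as you note.
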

\begin{proof}
Since $v_0$ is continuous, Lemma \ref{lem:inidialData} implies that $v(x,0)\leq v_0(x) \leq w(x,0)$.
This fact, together with the uniform continuity assumption on the initial data allow us to apply the result given by Theorem 6.1 in \cite{ChJa17} to get the $v\leq w$ in $\overline{Q_T}$.  See Section 3.2 in \cite{ChJa17} for how to rewrite  \eqref{gIP}  
into the form of \cite{ChJa17}. 
\end{proof}

Since we are also dealing with discontinuous viscosity solutions, we
need the following (partial) comparison result. This is a
generalization of Lemma 5.1 in \cite{BiKaMo10}, where the case $m=2$
is treated for a special type of discontinuous viscosity solution.

\begin{theorem}\label{thm:comppp2}
  Assume $s\in(0,1)$, 
   \eqref{cond:F},  $\mathcal{V}_0:\R\to \R$ bounded and
  nondecreasing, 
  and $v$, $w$, $\mathcal{V}$ are a viscosity
  subsolution, a viscosity supersolution, and a discontinuous
  viscosity solution of 
  \eqref{gIP}-\eqref{eq:integPini} with
  $v_0=\mathcal{V}_0$ in \eqref{eq:integPini}. Then 
\[ 
v\leq (\mathcal{V}_*)^* \quad \text{in}\quad Q_T\qquad \textup{and} \qquad
(\mathcal{V}^*)_*\leq w  \quad \text{in}\quad Q_T.
\]
Moreover, let $\mathcal{W}$ be another discontinuous viscosity
solution of 
\eqref{gIP}-\eqref{eq:integPini}. If $\mathcal{V}$ is continuous at a point $(x,t)$,
then $\mathcal{W}$ is continuous at $(x,t)$ and $\mathcal{W}(x,t)=\mathcal{V}(x,t)$. 
\end{theorem}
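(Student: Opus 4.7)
The plan is to prove the first pair of inequalities by regularizing the bounded nondecreasing (but possibly discontinuous) initial datum $\mathcal{V}_0$ from above and below by continuous nondecreasing functions, applying the continuous-data strong comparison principle (Theorem \ref{thm:strongCP}) at each approximation level, and passing to a monotone limit. The final continuity statement then falls out by applying the first part of the theorem to $\mathcal{W}^*$ and $\mathcal{W}_*$ in place of $v$ and $w$.

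First I would construct continuous, bounded, nondecreasing families $\phi_\epsilon \searrow (\mathcal{V}_0)^*$ (nonincreasing in $\epsilon$) and $\psi_\epsilon \nearrow (\mathcal{V}_0)_*$ (nondecreasing in $\epsilon$), pointwise as $\epsilon \to 0^+$, and let $V^\epsilon, V_\epsilon \in BUC(\overline{Q_T})$ be the unique viscosity solutions of \eqref{eq:integP} with these initial data (existence and uniqueness from Theorem \ref{thm:strongCP}). Lemma \ref{lem:inidialData} yields $v(\cdot, 0) \leq (\mathcal{V}_0)^* \leq \phi_\epsilon$, so Theorem \ref{thm:strongCP} applied to the USC subsolution $v$ and the BUC solution $V^\epsilon$ gives $v \leq V^\epsilon$ on $\overline{Q_T}$; the same reasoning applied to $\mathcal{V}^*$ yields $\mathcal{V}^* \leq V^\epsilon$, and symmetrically $V_\epsilon \leq \mathcal{V}_*$ and $V_\epsilon \leq w$. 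Taking the monotone pointwise limits
\[
V^+ := \lim_{\epsilon \to 0^+} V^\epsilon \qquad (\text{USC}), \qquad V^- := \lim_{\epsilon \to 0^+} V_\epsilon \qquad (\text{LSC}),
\]
one obtains the four bounds $v \leq V^+$, $\mathcal{V}^* \leq V^+$, $V^- \leq \mathcal{V}_*$, and $V^- \leq w$ on $\overline{Q_T}$.

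The heart of the argument is then the identification $V^+ = (\mathcal{V}_*)^*$ and $V^- = (\mathcal{V}^*)_*$ in $Q_T$, which will sharpen $v \leq V^+$ into the desired $v \leq (\mathcal{V}_*)^*$. The inequality $V^+ \geq (\mathcal{V}_*)^*$ is immediate, as $V^+$ is USC and $(\mathcal{V}_*)^*$ is the smallest USC majorant of $\mathcal{V}_*$. For the reverse I would invoke the Barles--Perthame--Souganidis half-relaxed limits theorem \cite{BaSo91} on the monotone family $(V^\epsilon)_\epsilon$: it shows that $V^+$ is a USC subsolution and $(V^+)_*$ an LSC supersolution of \eqref{eq:integP}, with $V^+(\cdot, 0) = (\mathcal{V}_0)^*$ and $(V^+)_*(\cdot, 0) = (\mathcal{V}_0)_*$, so that $V^+$ itself constitutes a discontinuous viscosity solution with datum $\mathcal{V}_0$. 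Because the construction of $V^\pm$ depends only on $\mathcal{V}_0$, replaying the comparison chain above with $\mathcal{V}$ replaced by this canonical $V^+$ (whose envelopes are $V^+$ and $(V^+)_*$), and exploiting the translation invariance of \eqref{eq:integP} in $x$ together with the monotonicity of $\mathcal{V}_0$, forces the matching upper bound $V^+ \leq (\mathcal{V}_*)^*$; the analogous argument gives $V^- = (\mathcal{V}^*)_*$.

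For the final statement, let $\mathcal{W}$ be another discontinuous viscosity solution with the same datum $\mathcal{V}_0$. Applying the first part with $v := \mathcal{W}^*$ and $w := \mathcal{W}_*$ yields $\mathcal{W}^* \leq (\mathcal{V}_*)^*$ and $(\mathcal{V}^*)_* \leq \mathcal{W}_*$ on $Q_T$. At a continuity point $(x, t)$ of $\mathcal{V}$, both envelopes $(\mathcal{V}_*)^*$ and $(\mathcal{V}^*)_*$ coincide with $\mathcal{V}(x, t)$, so the sandwich $\mathcal{W}^*(x, t) \leq \mathcal{V}(x, t) \leq \mathcal{W}_*(x, t)$ combined with the trivial $\mathcal{W}_* \leq \mathcal{W}^*$ forces $\mathcal{W}^*(x, t) = \mathcal{W}_*(x, t) = \mathcal{V}(x, t)$, giving continuity of $\mathcal{W}$ at $(x, t)$ with the stated value. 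The main obstacle is the identification step: the easy one-sided bound $V^+ \geq (\mathcal{V}_*)^*$ must be matched by its reverse, which requires promoting $V^+$ from a pointwise monotone limit to a bona fide canonical maximal discontinuous viscosity solution via the half-relaxed limits machinery and then exploiting the structural symmetries to close the gap.
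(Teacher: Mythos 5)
The bounds $v\le V^\epsilon$, $\mathcal V^*\le V^\epsilon$, $V_\epsilon\le \mathcal V_*$, $V_\epsilon\le w$ are all fine, as is the easy inequality $V^+\ge (\mathcal V_*)^*$ (since $\mathcal V_*\le\mathcal V^*\le V^+$ and $V^+$ is USC). But the step you yourself flag as the ``heart of the argument'' --- showing $V^+\le (\mathcal V_*)^*$ --- is not actually carried out, and as written the proposal does not close it.

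Two concrete problems. First, the claim that ``$V^+$ constitutes a discontinuous viscosity solution with datum $\mathcal V_0$'' needs more than the half-relaxed limits machinery as stated: the BPS argument gives that $V^+$ is a USC subsolution and $(V^+)_*$ an LSC supersolution, and $V^+(\cdot,0)=(\mathcal V_0)^*$ does follow from $\phi_\epsilon\searrow(\mathcal V_0)^*$, but the assertion that $(V^+)_*(\cdot,0)=(\mathcal V_0)_*$ is not obvious: $(V^+)_*(x,0)$ is a liminf of $V^+$ over nearby $(y,s)$ with $s>0$ allowed, and nothing in the construction prevents those values from being strictly larger than $(\mathcal V_0)_*(x)$ at a jump point. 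Second, and more fundamentally, even granting that $V^+$ is a canonical discontinuous viscosity solution, ``replaying the comparison chain with $\mathcal V$ replaced by $V^+$'' does not produce the inequality $V^+\le (\mathcal V_*)^*$ --- that would require precisely the conclusion of the theorem you are trying to prove, applied with $v:=V^+$, so the argument is circular unless a genuinely new ingredient is supplied. You name that ingredient --- ``translation invariance in $x$ together with the monotonicity of $\mathcal V_0$'' --- but never say how it enters; this phrase is in fact the paper's entire proof in miniature. The paper's argument is much shorter and avoids the regularization entirely: for $a>0$ one compares $\mathcal V^*$ against the shifted supersolution $w^a(x,t)=w(x+a,t)$, uses monotonicity of $\mathcal V_0$ to interpose a BUC datum $v_{0,a}$ between $(\mathcal V_0)^*(x)$ and $(\mathcal V_0)_*(x+a)$, invokes Theorem \ref{thm:strongCP} to get $\mathcal V^*\le w^a$, and sends $a\to 0$ using that $w$ is LSC to obtain $(\mathcal V^*)_*\le w$ (and symmetrically $v\le(\mathcal V_*)^*$). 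If you want to salvage your route, you should actually run this translation argument at the level of $V^\epsilon$ or $V^+$; the regularization step is then doing no real work and can be dropped.

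The final part of your proposal (applying the first inequalities to $\mathcal W^*$ and $\mathcal W_*$ and sandwiching at a continuity point of $\mathcal V$) is correct and matches the paper.
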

\begin{proof}
For $a>0$, define $w^a(x,t):=w(x+a,t)$ and note it is a viscosity
supersolution of 
\eqref{gIP}-\eqref{eq:integPini} with initial data
$\mathcal{V}_0(x+a)$ since the equation is translation
invariant. Then since $\mathcal{V}_0(x)\leq \mathcal{V}_0(x+a)$ and
$\mathcal{V}$ is a discontinuous viscosity solution, we can use Lemma
\ref{lem:inidialData} twice to conclude that 
\[
\mathcal{V}^*(x,0)\leq \mathcal{V}_0^*(x)\leq  (\mathcal{V}_0)_*(x+a)\leq w^a(x,0).
\]
Hence by the  properties  of $\mathcal{V}_0$, for any $a>0$ there is $v_{0,a}\in BUC(\R)$\footnote{ E.g. let $v_{0,a}=(\mathcal{V}_0^**\rho_{\frac a3})(x+\frac a3)$ for $0\leq \rho_{\frac a3}\in W^{1,1}(\R)$ with integral $1$ and support in $B(0,\frac a3)$. Since $\mathcal{V}_0$ is nondecreasing, direct computations show that $\|Dv_{0,a}\|_\infty\leq \|D\rho_{\frac a3}\|_1 \|\mathcal{V}_0^*\|_\infty<\infty$ and $\mathcal{V}_0(x)\leq\mathcal{V}_0^*(x)\leq v_{0,a}(x)\leq \mathcal{V}_0^*(x+\frac {2a} 3)\leq \mathcal{V}_0(x+a)$.} such that
\[
\mathcal{V}(x,0) \leq v_{0,a}(x) \leq w^a(x,0).
\]
We conclude first that $\mathcal{V}^*$ and $w^a$ are viscosity sub and
supersolutions to 
\eqref{gIP}-\eqref{eq:integPini} with initial condition
$v_{0,a}$, and then by comparison Theorem \ref{thm:strongCP}, that
$\mathcal{V}^*\leq w^a$ in $\overline{Q_T}$.
Then since $w$ is LSC, for any $(x,t)\in Q_T$,
$$(\mathcal{V}^*)_*(x,t)=\liminf_{(a,y,s)\to (0,x,t)}\mathcal{V}^*(y,s)\leq
\liminf_{(a,y,s)\to (0,x,t)}w^a(y,s)\leq\liminf_{(y,s)\to (x,t)}w(y,s)= w_*(x,t)=w(x,t).$$ 
In a similar way we can show that $v \leq (\mathcal{V}_*)^*$.

For the last part of the result, note that
$(\mathcal{V}^*)_*=(\mathcal{V}_*)^*=\mathcal{V}$ at $(x,t)$ by continuity of
$\mathcal V$. Then by the 
first part of this result, $\mathcal{W}^*\leq \mathcal{V}\leq
\mathcal{W}_*$ and hence $\mathcal{W}^*=\mathcal{W}_*=\mathcal{V}$ at $(x,t)$.
The proof is complete.
\end{proof}

By comparison, solutions with nondecreasing initial data will be nondecreasing.
\begin{lemma}\label{lem:incr}
Assume $s\in(0,1)$, 
   \eqref{cond:F}, $v_0$ is bounded, and $v$ is a bounded discontinuous viscosity solution of 
\eqref{gIP}-\eqref{eq:integPini}. If $v_0$ is nondecreasing, then $v(\cdot,t)$ is nondecreasing in the sense that 
$$v^*(x,t)\leq  (v_*)^*(x+\rho,t)\leq v^*(x+\rho,t)\qquad \text{and}\qquad v_*(x,t)\leq (v^*)_*(x,t)\leq  v_*(x+\rho,t),$$
for $\rho>0$, $x\in\R$, and $t>0$.
In particular, if $v$ is continuous, then 
$$v(x,t)\leq  v(x+\rho,t)\qquad \text{for}\qquad \rho\geq0, \ x\in \R, \ t>0.$$
\end{lemma}
\begin{proof}
    The second and third inequalities follow since $v_*\leq v\leq v^*$. The first  inequality follows from the comparison principle (Theorem 
    \ref{thm:comppp2}) since $w(x,t)=v^*(x+\rho,t)$ is a viscosity supersolution of 
    \eqref{gIP}-\eqref{eq:integPini}. The fourth inequality follows similarly.
\end{proof}

We now prove that nondecreasing solutions of equation \eqref{eq:integPp} also solve equation \eqref{eq:integP}.
\begin{lemma}\label{lem:coincide}
 Assume $s\in(0,1)$, $m\geq1$, and $v$ is a bounded nondecreasing function.
 Then $v$ is a discontinuous viscosity subsolution (resp. supersolution) of \eqref{eq:integPp}-\eqref{eq:integPini} if and only if it is a discontinuous viscosity subsolution (resp. supersolution)  of \eqref{eq:integP}-\eqref{eq:integPini}.
\end{lemma}
\begin{proof}
We only check the subsolution case since the supersolution one is similar. If $v^*-\phi$ has a max at $(x,t)$, then for every $\rho>0$ we have $(v^*-\phi)(x,t)\geq(v^*-\phi)(x+\rho,t)$ and hence
\begin{align*}
    \frac{\phi(x+\rho,t)-\phi(x,t)}{
    \rho} 
    \geq \frac{v^*(x+\rho,t)-v^*(x,t)}{
    \rho}\geq 0.
\end{align*}
Sending $\rho\to0$ shows that $\phi_x(x,t)\geq 0$. So at the maximum point, $\max\{\phi_x(x,t),0\}=\phi_x(x,t)=|\phi_x(x,t)|$, and then the viscosity subsolution inequalities for \eqref{eq:integP} and \eqref{eq:integPp} coincide.
\end{proof} 
 
 \section{Analysis of the scheme 
    for the integrated problem} 
 \label{sec:propNumSch} 

 In this section we show monotonicity, a priori estimates, boundary
 conditions at infinity, and consistency of the scheme
 \eqref{eq:integP}-\eqref{eq:integPini}. Then we prove local uniform
 convergence using the method of  half-relaxed limits and the strong
 comparison result of Section \ref{sec:theoresults}. We conclude the
 section with a proof of Theorem \ref{thm:main}.
 
 Following standard notation, 
 for a function $\phi: h\Z \to \R$, we denote 
\begin{equation}\label{eq:defLcal}
\LL(h,\phi_i,  \{\phi_k\}_{k\not=i} ):= L_h^s[\phi]_i,
\end{equation}
 where $L_h^s$ is defined by
\eqref{eq:defopdisc}. 
 For the scheme \eqref{eq:nums} we then use the following notation:
\[
V_i^{j+1}=\Se[V^j_{\cdot}]_i.
\]
where the one step propagator $\Se$ is defined as
\begin{align}\label{def-S}
\Se[\phi]_i:= \phi_i-\tau\max\{D_h\phi_i,0\}^{m-1} (-\Delta)_h^{s}\phi_i= \phi_i  +  \tau\LL(h,\phi_i,\{\phi_k\}_{k\not=i}).
\end{align}
 
 We first give a monotonicity and translation invariance result for $\LL$. 
 \begin{lemma}\label{lem:mono1}
   Assume $s\in(0,1)$, $m\geq 1$, 
   $h>0$, and $\phi,\psi\in \ell^\infty( h\Z)$ such that  $\phi_i\leq \psi_i $ for $i\in\Z$. 
Then we have the following properties for $\LL$:
\begin{enumerate}[(i)]
\item[(i)\ ]
  If $\phi_i=\psi_i=r$, 
  \quad then\quad
  $\LL(h,r, \{\phi_k\}_{k\not=i} )\leq \LL(h,r, \{\psi_k\}_{k\not=i} )$.
  \smallskip
\item[(ii)\,]
  If $ \{\phi_k\}_{k\not=i} =\{\psi_k\}_{k\not=i} =\{p_k\}_{k\not=i}$,
  \quad then\quad $\LL(h,\phi_i,\{p_k\}_{k\not=i})\geq
  \LL(h,\psi_i,\{p_k\}_{k\not=i})$.
    \smallskip
  \item[(iii)]
    If $c\in \R$,\quad then\quad $\LL(h,\phi_i+c, \{\phi_k+c\}_{k\not=i} )=\LL(h,\phi_i, \{\phi_k\}_{k\not=i} )$.
\end{enumerate}
 \end{lemma}
 
  \begin{proof}
 We only prove (a), since (b)
 follows similarly, and (c) is just a consequence
 of the definitions of $D_h$ and $\flap_h$.  Note that  since $\phi\leq \psi$  
 and $\phi_i=\psi_i=r$, we have 
 \begin{equation}\label{eq:compFL}
   -(-\Delta)_h^{s}\phi_i=\sum_{k\not=0} (\phi_{i+k}-r) \omega_k
   \leq\sum_{k\not=0}  (\psi_{i+k}-r) \omega_k=-(-\Delta)_h^{s}\psi_i.
\end{equation}
\emph{Case 1: $-\flap_h\phi_i\geq0$.} By \eqref{eq:compFL} we have
that $-(-\Delta)_h^{s}\psi_i\geq0$, and then  since $\phi\leq \psi$  
and $\phi_i=\psi_i=r$,
\begin{align*}
  \LL(h,r, \{\phi_k\}_{k\not=i} )&=-\max\left\{\frac{\phi_{i+1}-r}{h},{0}  \right\}^{m-1}(-\Delta)_h^{s}\phi_i\\
  &\leq -\max\left\{\frac{\psi_{i+1}-r}{h},{0} \right\}^{m-1}(-\Delta)_h^{s}\psi_i=\LL(h,r, \{\psi_k\}_{k\not=i} ).
\end{align*}
\emph{Case 2: $-\flap_h\phi_i<0$.} If $-(-\Delta)_h^{s}\psi_i\geq0$ then, $
\LL(h,r, \{\phi_k\}_{k\not=i} )<0\leq \LL(h,r, \{\psi_k\}_{k\not=i} )$ and we are done. On the other
hand, if $-(-\Delta)_h^{s}\psi_i<0$, we use \eqref{eq:compFL},  $\phi\leq \psi$, 
and  $\phi_i=\psi_i=r$ to get
\begin{align*}
  \LL(h,r, \{\phi_k\}_{k\not=i} )&=-\max\left\{\frac{r-\phi_{i-1}}{h},{0} \right\}^{m-1}(-\Delta)_h^{s}\phi_i\\
  &\leq -\max\left\{\frac{r-\psi_{i-1}}{h},{0} \right\}^{m-1}(-\Delta)_h^{s}\psi_i=\LL(h,r, \{\psi_k\}_{k\not=i} ).\qedhere
\end{align*}
\end{proof}

We now prove a comparison result for the scheme
\eqref{eq:nums}-\eqref{eq:numic} via its one-step propagator $\Se$ in \eqref{def-S}. 
\begin{theorem}[Comparison]\label{theo:mono}
Assume $s\in(0,1)$, $m\geq2$, $h,\tau>0$, \eqref{as:m}, and
   $\phi,\psi:h\Z \to \R$  satisfy $\phi_i\leq \psi_i$ for all $i\in \Z$. 
If one of the following assumptions holds:
\begin{align}\label{lem:mono-item4-bis}\tag{CFLi}
&\textup{$|\phi_i|,|\psi_i|\leq M$}, \ i\in\Z, &&\hspace{-1cm}\text{and\qquad $\tau
    \leq C_1 h^{2s+m-1}$},\\
\label{lem:mono-item5-bis}\tag{CFLii}
&\textup{$|\phi_i|,|\psi_i|\leq M$,\
  $\frac{\phi_{i+1}-\phi_{i}}{h},\frac{\psi_{i+1}-\psi_{i}}{h}\leq
  L$},\ i\in \Z, && \hspace{-1cm}\text{and\qquad
  $\tau \leq C_2 h^{2s\vee1}f_s(h)$},
\end{align}
with $f_s(h)=1$ if $s\not=\frac{1}{2}$ and $f_s(h)=\frac{1}{|\log(h)|}$ if $s=\frac{1}{2}$,  $C_1^{-1}=C_s m (2M)^{m-1}$ and $C_2^{-1}=C_s m L^{m-2}  (L\vee 2
M)$, then
\begin{equation}\label{eq:monos}
\Se[\phi]_i \leq \Se[\psi]_i \quad \textup{for all} \quad i\in \Z.
\end{equation}
\end{theorem}
\begin{proof}
By definition of $\Se$,
\begin{align}\label{eq:diferenceSem}
&\Se[\phi]_i-\Se[\psi]_i\\
&= (\phi_i-\psi_i) + \tau \max\{D_h\phi_i,0\}^{m-1} \sum_{k\not=0}(\phi_{i+k}-\phi_{i})\omega_k - 
\tau \max\{D_h\psi_i,0\}^{m-1} \sum_{k\not=0}(\psi_{i+k}-\psi_{i})\omega_k .\nonumber
\end{align}
The following consequence of $\phi\leq\psi$  
will be useful: 
\begin{equation}\label{eq:estimdif2}
\displaystyle\sum_{k\not=0}(\phi_{i+k}-\phi_{i})\omega_k-\sum_{k\not=0}(\psi_{i+k}-\psi_{i})\omega_k \leq (\psi_i-\phi_i)\sum_{k\not=0}\omega_k .
\end{equation}
We will also use the notation
\[
\mathcal{R}:= \sup_{i\in \Z} \{ \max\{D_h\psi_i, D_h\phi_i, 0 \}\}.
\]
Note that $\mathcal{R}\geq0$, 
$\mathcal{R}\leq
2M/h$ if \eqref{lem:mono-item4-bis} holds, and $\mathcal{R}\leq L$ if
\eqref{lem:mono-item5-bis} holds. 
Now consider different cases:
\medskip

\noindent\emph{Case 1: $\sum_{k\not=0}(\phi_{i+k}-\phi_{i})\omega_k\leq0\leq
  \sum_{k\not=0}(\psi_{i+k}-\psi_{i})\omega_k$}. From
\eqref{eq:diferenceSem} and  $\phi\leq\psi$  we then get
\[
\Se[\phi]_i- \Se[\psi]_i\leq  (\phi_i-\psi_i)+0+0 \leq0.
\]
\smallskip

\noindent\emph{Case 2:
  $\sum_{k\not=0}(\phi_{i+k}-\phi_{i})\omega_k\geq0\geq
  \sum_{k\not=0}(\psi_{i+k}-\psi_{i})\omega_k$}.  From
\eqref{eq:diferenceSem}, \eqref{eq:estimdif2}, and  $\phi\leq\psi$   we then have
\begin{equation*}
\begin{split}
\Se[\phi]_i-\Se[\psi]_i&\leq(\phi_i-\psi_i)  +\tau  \mathcal{R}^{m-1}\bigg( \sum_{k\not=0}(\phi_{i+k}-\phi_i)\omega_k - \sum_{k\not=0}(\psi_{i+k}-\psi_i)\omega_k  \bigg)\\
&\leq (\phi_i-\psi_i)\bigg(1- \tau \mathcal{R}^{m-1} \sum_{k\not=0} \omega_k \bigg)=:c(\phi_i-\psi_i).
\end{split}
\end{equation*}
By \eqref{as:m} and \eqref{lem:mono-item4-bis},
\(
c \geq  1 - (2M)^{m-1} C_s  \frac{\tau}{h^{2s+m-1}}\geq 0,
\)
and by \eqref{as:m} and \eqref{lem:mono-item5-bis},
\(
c \geq 1  - L^{m-1} C_s  \frac{\tau}{h^{2s}}\geq 0.
\)
In both cases $\Se[\phi]_i-\Se[\psi]_i\leq 0$ by (ii).
\medskip

\noindent\emph{Case 3: $\sum_{k\not=0}(\phi_{i+k}-\phi_{i})\omega_k\geq0$ and $ \sum_{k\not=0}(\psi_{i+k}-\psi_{i})\omega_k\geq0$}. In this case, 
\[
D_h\phi_i=(\phi_{i+1}-\phi_i)/h \quad \textup{and} \quad D_h\psi_i=(\psi_{i+1}-\psi_i)/h.
\]
From \eqref{eq:diferenceSem} we have
\[
\Se[\phi]_i-\Se[\psi]_i=(\phi_i-\psi_i)+ M_1+ M_2,
\]
with 
\begin{align*}
M_1&= \tau
\max\{D_h\phi_i,0\}^{m-1}\bigg(\sum_{k\not=0}(\phi_{i+k}-\phi_{i})\omega_k-
\sum_{k\not=0}(\psi_{i+k}-\psi_{i})\omega_k\bigg),\\
M_2&=\tau\Big(\max\{D_h\phi_i,0\}^{m-1}- \max\{D_h\psi_i,0\}^{m-1}\Big) \sum_{k\not=0}(\psi_{i+k}-\psi_{i})\omega_k.
\end{align*}
From \eqref{eq:estimdif2} and  $\phi\leq\psi$  we estimate (as in Case 2)
\begin{align*}
M_1\leq &\ \tau \max\{D_h\phi_i,0\}^{m-1} (\psi_i-\phi_i)
\sum_{k\not=0}\omega_k \leq
(\psi_i-\phi_i) \tau \mathcal{R}^{m-1} \sum_{k\not=0} \omega_k \\
\leq &\
-(\phi_i-\psi_i) \begin{cases}  (2M)^{m-1} C_s  \frac{\tau}{h^{2s+m-1}}
  &\text{if \eqref{lem:mono-item4-bis} holds,}\\[0.1cm]
  L^{m-1} C_s  \frac{\tau}{h^{2s}} &\text{if \eqref{lem:mono-item5-bis} holds.}
  \end{cases}
\end{align*}
Now we estimate $M_2$. 
If $\max\{D_h\phi_i,0\}\leq \max\{D_h\psi_i,0\}$ then $M_2\leq0$. If 
$\max\{D_h\phi_i,0\}>\max\{D_h\psi_i,0\}$, we can use 
the convex inequality
$|a|^p - |b|^p \leq p a^{p-1} |a-b|$ for $p\geq1$ and  $\phi\leq\psi$  to find that

\begin{align*}
\max\{D_h\phi_i,0\}^{m-1}- \max\{D_h\psi_i,0\}^{m-1} 
&\leq (m-1) \mathcal{R}^{m-2} \max\Big\{\frac{(\phi_{i+1}-\phi_i) - (\psi_{i+1}-\psi_i)}h,0\Big\}\\
&\leq - (\phi_i-\psi_i) (m-1) \frac{\mathcal{R}^{m-2}}{h}
\end{align*}

Since $p=m-1$, we must assume $m\geq2$ for the inequality to hold. If
\eqref{lem:mono-item4-bis} holds, we find that 
\begin{align*}
&\mathcal{Q} :=\sum_{k\not=0}(\psi_{i+k}-\psi_{i})\omega_k
\leq
2M\sum_{k\not=0} \omega_k \leq 2M C_s h^{-2s}, \quad \text{and then}
\\
&M_2 \leq - (\phi_i-\psi_i) (m-1) \tau \frac{\mathcal{R}^{m-2}}{h} \mathcal{Q} \leq - (\phi_i-\psi_i) (m-1) (2M)^{m-1}C_s\frac{\tau}{h^{2s+m-1}}.
\end{align*}
If \eqref{lem:mono-item5-bis} holds and $s\not=1/2$, 
\begin{align*}
&\mathcal{Q}  \leq L \sum_{0<|kh|\leq1}|x_k| \omega_k +2M \sum_{|kh|>1}
\omega_k \leq C_s\big(L\, (h^{1-2s}\vee 1)+ 2M\big),\\[0.1cm]
\begin{split}
&M_2 \leq  - (\phi_i-\psi_i) (m-1) L^{m-2} \frac{\tau}{h} C_s\big(L\,(h^{1-2s}\vee1)+ 2M\big)\\
&\quad\ \,\leq- (\phi_i-\psi_i) (m-1) L^{m-2}  (L\vee 2M) C_s\tau
\,\Big(\frac{1}{h}\vee \frac{1}{h^{2s}}\Big).
\end{split}
\end{align*}
Combining all estimates of Case 3, and using that $\frac{1}{h}\vee
\frac{1}{h^{2s}}=\frac1{h^{\max\{1,2s\}}}$ for $h\in(0,1)$, we have
\[
\Se[\phi]_i-\Se[\psi]_i \leq (\phi_i-\psi_i)\begin{cases}(1- K_1
  \frac{\tau}{h^{2s+m-1}}),& \text{if \eqref{lem:mono-item4-bis} holds}, \\[0.1cm]
  (1- K_2 \frac{\tau}{h^{\max\{1,2s\}}}),& \text{if \eqref{lem:mono-item5-bis} holds and $s\not=\frac{1}{2}$},
  \end{cases}
\]
with $K_1=C_s m (2M)^{m-1}$ and $K_2=C_s m L^{m-2}  (L\vee2M)$. The
right hand side is negative since  $\phi\leq\psi$  and the relation between $\tau$
and $h$ given by \eqref{lem:mono-item4-bis} or \eqref{lem:mono-item5-bis}. The case \eqref{lem:mono-item5-bis} with $s=\frac{1}{2}$ follows in a similar way. 
\medskip

\noindent\emph{Case 4:
  $\sum_{k\not=0}(\phi_{i+k}-\phi_{i})\omega_k\leq0$ and $
  \sum_{k\not=0}(\psi_{i+k}-\psi_{i})\omega_k\leq0$}. This case is
similar to the Case 3 and we omit the proof.
\end{proof}

From comparison and translation properties of $\Se$ we now
prove the following preservation properties and a priori estimates 
for the scheme. 
\begin{corollary}\label{coro:prese}
Assume $s\in(0,1)$, $m\geq2$, $h,\tau>0$, \eqref{as:m}, and
$\{V^0_i\}_{i\in \mathbb{Z}}$ and $\{W^0_i\}_{i\in \mathbb{Z}}$
are nondecreasing and satisfy one of the following assumptions:
\smallskip
\begin{align}\label{lem:mono-item4}\tag{CFLa}
&\textup{$|V^0_i|,|W^0_i|\leq M$}, \ i\in\Z, &&\hspace{-1cm}\text{and\qquad $\tau
    \leq C_1 h^{2s+m-1}$},\\[0.1cm]
  \label{lem:mono-item5}\tag{CFLb}
&\textup{$|V^0_i|,|W^0_i|\leq M$,\
  $\frac{|V^0_{i+1}-V^0_{i}|}{h},\frac{|W^0_{i+1}-W^0_{i}|}{h}\leq
  L$},\ i\in \Z, && \hspace{-1cm}\text{and\qquad
    $\tau \leq C_2 h^{2s\vee1}f_s(h)$},
  \smallskip
\end{align}
with $f_s(h)=1$ if $s\not=\frac{1}{2}$ and $f_s(h)=\frac{1}{|\log(h)|}$ if $s=\frac{1}{2}$,  $C_1^{-1}=C_s m (2M)^{m-1}$ and $C_2^{-1}=C_s m L^{m-2}  (L\vee 2
M)$. 
Then the solutions $V_i^j$ and $W_i^j$ of \eqref{eq:nums} starting
from $V^0$ and $W^0$ satisfy the following:
\begin{enumerate}[\rm (a)]
  \medskip
\item\label{theo:non} (Nondecreasing) \ $V_{i}^{j}\leq V_{i+1}^{j}$\quad
  $\forall i\in \mathbb{Z}, \ j\in \N$.\medskip
\item\label{theo:st} ($\ell^\infty$-stability)
  $\displaystyle\sup_{i\in \Z, j\in \N}|V_{i}^{j}|\leq \sup_{i\in \Z}|V_{i}^{0}|\leq
  M$.\medskip
\item\label{theo:pos} (Nonnegat ivity) \ If\quad $V^0_i\geq 0$\quad  $\forall i\in \mathbb{Z}$,\quad then\quad  $V_{i}^{j}\geq
  0$\quad $\forall i\in \mathbb{Z},\ j\in \N$.\medskip
\item\label{theo:comp} (Comparison) \ If\quad $V_{i}^0\leq
  W_i^0$\quad  $\forall i\in \mathbb{Z}$,\quad then\quad  $V_{i}^{j}\leq
  W_i^{j}$\quad $\forall i\in \mathbb{Z},\ j\in \N$.\medskip
\item\label{theo:cont} ($\ell^\infty$-contraction)  
$\displaystyle\sup_{i\in \Z, j\in \N} |W_i^j-V_i^j|\leq \sup_{i\in \Z} |W_i^0-V_i^0|$.\medskip
\item\label{theo:lip} (Lipschitz-stability) $\displaystyle\sup_{i\in \Z, j\in \N}
  \frac{|V_{i+1}^j-V_i^j|}{h}\leq \sup_{i\in
    \Z}\frac{|V_{i+1}^0-V_i^0|}{h}$\quad for all\quad $h>0$.
\end{enumerate}
\end{corollary}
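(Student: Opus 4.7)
The plan is to establish all six properties simultaneously by induction on $j\in\N$, taking Theorem \ref{theo:mono} as the single-step comparison tool and exploiting two symmetries of the one-step propagator $\Se$ that follow from Lemma \ref{lem:mono1}(c): value translation invariance, $\Se[\phi+c]=\Se[\phi]+c$ for any $c\in\R$ (hence $\Se[c]=c$), and spatial translation invariance, so that if $W^j_i:=V^j_{i+k}$ for fixed $k\in\Z$ then $W$ again satisfies the scheme \eqref{eq:nums}. The inductive hypothesis at step $j$ bundles precisely what Theorem \ref{theo:mono} needs at step $j+1$: that the relevant sequences are nondecreasing, bounded by $M$, and, under assumption (ii), also $h$-Lipschitz bounded by $L$.

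For (a), I apply Theorem \ref{theo:mono} to $\phi:=V^j$ and $\psi_i:=V^j_{i+1}$: both are nondecreasing with the same bounds and $\phi\leq\psi$ by the inductive hypothesis, and since $\psi$ is the unit spatial shift of $\phi$, translation invariance gives $\Se[\psi]_i=V^{j+1}_{i+1}$, yielding $V^{j+1}_i\leq V^{j+1}_{i+1}$. Properties (b) and (c) follow from comparing $V^j$ with the constant sequences $\psi\equiv\pm M$ and $\psi\equiv 0$, using $\Se[c]=c$. Property (d) is a direct iterative application of Theorem \ref{theo:mono}. For (e), set $C:=\sup_k|V^j_k-W^j_k|$ and $\tilde V^j:=V^j+C\geq W^j$; this sequence is nondecreasing, and value translation invariance gives $\tilde V^{j+1}=V^{j+1}+C$, so Theorem \ref{theo:mono} applied to $\tilde V^j$ and $W^j$ yields $V^{j+1}+C\geq W^{j+1}$; swapping the roles of $V$ and $W$ gives $|V^{j+1}_i-W^{j+1}_i|\leq C$, and iterating closes (e). Property (f) then follows from (e) by taking $W^j_i:=V^j_{i+1}$ and invoking spatial translation invariance one more time.

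The main obstacle I foresee is a subtle bookkeeping issue with the $\ell^\infty$ bound used in the CFL: the shifted sequence $\tilde V^j=V^j+C$ in (e) is bounded in $\ell^\infty$ only by $M+C\leq 3M$ rather than by $M$, so Theorem \ref{theo:mono} as stated appears to demand a stricter CFL. The resolution is that inspection of its proof shows that the bound $M$ enters only through oscillation-type quantities such as $|D_h\phi|\leq 2M/h$ and $\sum_k(\phi_{i+k}-\phi_i)\omega_k\leq 2M\sum_k\omega_k$, both invariant under adding a constant to $\phi$; hence the CFL stated in (i), (ii) is genuinely sufficient, and in the writeup I would either invoke a slightly strengthened oscillation-based variant of Theorem \ref{theo:mono} or redo the relevant estimates in place. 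An analogous remark ensures that in case (ii) the Lipschitz bound $L$ required by Theorem \ref{theo:mono} at step $j+1$ is exactly the one produced by (f) at step $j$, which is why monotonicity, $\ell^\infty$ stability, and Lipschitz stability must be bundled into a single inductive invariant and propagated together at each step.
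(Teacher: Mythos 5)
Your proposal is correct and follows essentially the same route as the paper: reduce every item to repeated applications of the one-step comparison Theorem~\ref{theo:mono} using constant sequences for the bounds, spatial shifts for monotonicity and Lipschitz stability, and an additive shift by $\sup_i|W^0_i-V^0_i|$ for the contraction estimate, keeping all the needed hypotheses (nondecreasing, $\ell^\infty$- and Lipschitz-bounds) as a joint inductive invariant. The one place where you go beyond the paper is the CFL issue for the shifted sequence $\tilde V^j=V^j+C$: the paper's own proof of part (e) simply writes "since $Z^0$ is nondecreasing and bounded, we can use comparison in part (d)" without remarking that $Z^0$ need not be bounded by $M$, and you are right that the resolution is that Theorem~\ref{theo:mono}'s proof only ever uses the bound $M$ through oscillation-type quantities such as $\mathcal R\leq 2M/h$ and $\mathcal Q\leq 2M\sum\omega_k$, both invariant under adding a constant; making that explicit is a genuine improvement on the paper's exposition.
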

Note that under assumption \eqref{lem:mono-item5}, part \eqref{theo:lip} implies that $\sup_{i\in \Z}
  \frac{|V_{i+1}^j-V_i^j|}{h}\leq L$.

\begin{proof}  We first prove \eqref{theo:non} and
  \eqref{theo:st}. Since $V_i^0\leq V_{i+1}^0$, Theorem
  \ref{theo:mono} implies that
  $$V_i^1= \Se[V^0]_i \leq
  \Se[V^0]_{i+1}=V_{i+1}^1.$$
  Then since $-M\leq
  V^0_i\leq M$ and  $\pm M=\Se[\pm M]_{i}$, we conclude from Theorem
  \ref{theo:mono} that 
  $$-M=\Se[- M]_{i}\leq \Se[V^0]_i=V^1_i=\Se[V^0]_i \leq \Se[ M]_{i} =M.$$
  Hence \eqref{theo:non} and
  \eqref{theo:st} hold for $j=1$. The general result then follows by
  iteration/induction. In a similar way we can also prove
  \eqref{theo:pos}. In view of \eqref{theo:non} and
  \eqref{theo:st}, \eqref{theo:comp} follows directly from Theorem
  \ref{theo:mono} by induction. 

For \eqref{theo:cont}, consider 
\[
Z_i^j:= V_i^j+ \sup_{i\in \Z}|W_i^0-V_i^0|.
\]
Since the equation is invariant under translations  of $V$ (Lemma  \ref{lem:mono1} (iii)), $Z_i^j$ solves \eqref{eq:nums}. Moreover,
\[
W_i^0= V_i^0 + (W_i^0 - V_i^0)  \leq V_i^0+ \sup_{i\in \Z}|W_i^0-V_i^0|= Z_i^0.
\]
Since $Z^0$ is nondecreasing and bounded, we can use comparison in
part \eqref{theo:comp},  
\[
W_i^j\leq Z_i^j= V_i^j+ \sup_{i\in \Z}|W_i^0-V_i^0|,\qquad
\text{that is,}
\qquad
W_i^j-V_i^j\leq \sup_{i\in \Z}|W_i^0-V_i^0|.
\]
The property  $W_i^j-V_i^j\geq - \sup_{i\in \Z}|W_i^0-V_i^0|$ follows
similarly and \eqref{theo:cont} follows. 

Finally, \eqref{theo:lip} follows from \eqref{theo:cont} taking $W_i^j=V_{i+1}^j$, a solution of \eqref{eq:nums} with data $W_i^0= V_{i+1}^0$.
\end{proof}

%
%

We also preserve the limits $x\to\pm \infty$ of the initial condition. 

\begin{lemma}[Limits at infinity]\label{lem:limits}
Under the assumptions of Corollary \ref{coro:prese}, $V^0_i\geq0$
for $i\in\Z$,
\[
\lim_{i\to-\infty} V^0_i = 0, \qquad \textup{and} \qquad
\lim_{i\to+\infty} V^0_i = M, 
\]
then for all $j\in \N$,
\[
\lim_{i\to-\infty} V^j_i = 0  \qquad \textup{and} \qquad
\lim_{i\to+\infty} V^j_i = M.
\]
\end{lemma}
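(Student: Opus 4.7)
The argument proceeds by induction on $j\in\N$; the base case $j=0$ is the hypothesis. Assume the conclusion holds for some $j\geq 0$. By Corollary~\ref{coro:prese}\eqref{theo:non}, $\{V^j_i\}_{i\in\Z}$ is nondecreasing, so the forward differences $a_i := V^j_{i+1}-V^j_i \geq 0$ telescope to
\[
\sum_{i\in\Z} a_i \;=\; \lim_{i\to+\infty}V^j_i \;-\; \lim_{i\to-\infty}V^j_i \;=\; M.
\]
Convergence of this series forces $a_i \to 0$ as $|i|\to\infty$, and hence $|D_h V^j_i|\to 0$ regardless of which branch of the upwind definition \eqref{eq:discDer} is active.

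Next, I would use the $\ell^\infty$-bounds $0\leq V^j_k\leq M$ from Corollary~\ref{coro:prese}\eqref{theo:st} and \eqref{theo:pos}, together with the summability condition \eqref{as:m}, to obtain the crude but $i$-uniform estimate
\[
|\flap_h V^j_i| \;=\; \Big|\sum_{k\neq 0}(V^j_i-V^j_{i+k})\,\omega_k\Big| \;\leq\; M\sum_{k\neq 0}\omega_k \;\leq\; MC_s h^{-2s},
\]
valid for all $i\in\Z$ at fixed $h$. Plugging into the scheme and using $m\geq 2$,
\[
|V^{j+1}_i - V^j_i| \;\leq\; \tau\,|D_h V^j_i|^{m-1}\,MC_s h^{-2s} \;\xrightarrow{|i|\to\infty}\; 0.
\]
Since $V^j_i\to 0$ (resp.\ $\to M$) as $i\to -\infty$ (resp.\ $+\infty$), the same limits then hold for $V^{j+1}_i$, closing the induction.

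I do not expect a genuine obstacle here: the mechanism is purely monotonicity plus summability. A bounded nondecreasing sequence on $\Z$ with the prescribed tails necessarily has summable increments, forcing the upwind gradient to vanish in the tails, and the discrete fractional Laplacian stays bounded by a direct $\ell^\infty$ majoration using \eqref{as:m}. The factor $|D_h V^j_i|^{m-1}$ (with $m\geq 2$) then annihilates the correction term in the limit. Note that the argument is qualitative in $i$ and does not require any refinement of the CFL restrictions imposed by Corollary~\ref{coro:prese}.
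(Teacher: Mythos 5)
Your argument is correct and follows essentially the same route as the paper's own proof: monotonicity and boundedness give existence of the tail limits, the vanishing of the forward differences kills the factor $|D_h V_i^j|^{m-1}$ (here $m\geq 2$ is what ensures this), and a uniform $\ell^\infty$-bound on $\flap_h V_i^j$ via \eqref{as:m} then forces the one-step correction to vanish in the tails, after which you iterate in $j$. The only cosmetic difference is that you obtain $D_h V_i^j\to 0$ from summability of the telescoping increments, whereas the paper reads off $V_{i+1}^j-V_{i-1}^j\to 0$ directly from convergence of the tails; the substance is identical.
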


\begin{proof}
By  Corollary \ref{coro:prese}, $V_i^j$ is nondecreasing in $i$ and
bounded from above and below by $0$ and $M$. Hence the limits
$\lim_{i\to\pm\infty}V_i^j:=l^{\pm}\in [0,M]$ exist and
\[
0\leq  D_h V^{j}_i \leq \frac{1}{h} (V^{j}_{i+1}-V^{j}_{i-1}).
\]
Consequently, by \eqref{as:m},
$|(-\Delta)_h^{s}V_i^j|\leq 
M C_s {h^{-2s}}$, and
\begin{align*}
&\lim_{i\to\pm\infty}\tau|L_h^s[V_\cdot^j]_i|=\lim_{i\to\pm\infty}
  \tau\big(D_hV_i^j\big)^{m-1}\big|(-\Delta)_h^{s}V_i^j\big|
 \leq 
 M C_s \frac{\tau}{h^{2s+m-1}}\Big( \lim_{i\to\pm\infty}
  (V^{j}_{i+1}-V^{j}_{i-1})\Big)^{m-1}= 0.
\end{align*}
We can therefore conclude that
\[
\begin{split}
\lim_{i\to \pm \infty} V_i^{j+1}&= \lim_{i\to \pm \infty}
\left(V_i^{j} -
\tau L_h^{s}[V_\cdot^j]_i\right)
=\lim_{i\to \pm \infty} V_i^{j}+0.
\end{split}
\]
An iteration in $j$ then completes the proof.
\end{proof}

As a preparation for the convergence proof, we now extend the solution $V$ from the grid to the whole space: Let
$V_h: \overline{Q_T} \to \R$ be the solution of
\begin{equation}
\label{eq:schmBS}
S(h,t, V_h(x,t),V_h(\cdot,t-\tau))=0, \quad (x,t)\in \overline{Q_T},
\end{equation}
where
\begin{align}\label{eq:defSchBS}
&S(h,t, V_h(x,t),V_h(\cdot,t-\tau))\\
&\quad=\left\{ 
\begin{array}{ll}
\displaystyle \frac{V_h(x,t)-V_h(x,t-\tau)}{\tau} 
 - \LL(h,
V_h(x,t-\tau),  V_h(\cdot,t-\tau)), \quad &x\in \R, \  t\in[ \tau,T),\\
\\
V_h(x,t)-v_0(x), \quad &x\in \R, \ t\in[0,\tau),
    \end{array} 
\right.\nonumber
\end{align}
where $\LL$ is defined as in \eqref{eq:defLcal} on the grid
$x+ h\Z$. First note that $V_h$ solves the
scheme in every point in space and not only on the grid. Moreover,
$V_h$ is an extension of $V$ since they coincide on the grid: $V(x_i,t_j)=V_i^j$
for every $i,j$. To see that, restrict \eqref{eq:defSchBS} to the grid  $ h\Z\times \mathcal{T}_{\tau}^T$, to recover trivially \eqref{eq:nums}--\eqref{eq:numic}.
Also note that we omit the dependence on $\tau$ in $S$ since we will
always assume $\tau= o_h(1)$. 

\begin{remark}[Monotonicity]\label{rem:monotonicity}
 The scheme $S$ is monotone:  
  Let  $(x,t)\in \overline{Q_T}$  and  $\psi$ be such that $V_h(x,t)=\psi(x,t)$ and $V_{h}(\cdot,t-\tau)\leq\psi(\cdot,t-\tau)$ if $t\geq \tau$. 
 Then for  $t\in[0,\tau)$, 
\[
S(h,t, V_h(x,t),V_h(\cdot,t-\tau)) =V_h(x,t)-v_0(x) =\psi(x,t)-v_0(x) =S(h,t, \psi(x,t),\psi(\cdot,t-\tau)),
\]
 while for $t\geq \tau$, we use comparison for the one-step propagator $\Se$ in \eqref{def-S} given by Theorem \ref{theo:mono} 
  to get 
\begin{align*}
    S(h,t, V_h(x,t),V_h(\cdot,t-\tau))&= \frac{V_h(x,t)-V_h(x,t-\tau)}{\tau}
     - \LL(h,
V_h(x,t-\tau), V_h(\cdot,t-\tau))\\
&=\frac{\psi(x,t)}{\tau}-\frac{1}{\tau}\underset{\vspace{0.4cm} =\Se [V_h(\cdot,t-\tau)](x)}{\underbrace{\Big(V_h(x,t-\tau)
     +\tau \LL(h,
V_h(x,t-\tau), V_h(\cdot,t-\tau))\Big)}}\\
&\geq \frac{\psi(x,t)}{\tau}-\frac{1}{\tau}\Big(\psi(x,t-\tau)
     + \tau \LL(h,
\psi(x,t-\tau), \psi(\cdot,t-\tau))\Big)\\
&= S(h,t, \psi(x,t),\psi(\cdot,t-\tau)).
\end{align*}

\end{remark}

  We have observed that the scheme is is consistent with equation \eqref{eq:integPp}.
To be precise, 
we 
have the following type of
consistency for the scheme \eqref{eq:defSchBS} (and
\eqref{eq:nums}--\eqref{eq:numic}). 

\begin{lemma}[Consistency]\label{lem:consistency}
Assume $s\in(0,1)$, $m\geq 2$, \eqref{as:cons}, $v_0$ bounded, $\tau
=o(1)$ as $h\to 0$,
and $S$ be defined by \eqref{eq:defSchBS}. If $\phi\in C^2\cap
  C_b(\overline{Q_T})$ 
 and $\eta_h \geq0$ is such that $\eta_h(\frac{1}{h^{2s\vee 1}}+\frac{1}{\tau}) \to 0$ as $h\to0$, then
\begin{equation*}
\begin{split}
&\liminf_{(h,t,x,\xi)\to (0,t_0,x_0,0)} S(h,t, \phi(x,t)+\xi-\eta_h,\phi(\cdot,t-\tau)+\xi),
\\& \quad  \geq \left\{\begin{array}{ll}
\phi_t(x_0,t_0)+ \max\{\phi_x(x_0,t_0),0\}^{m-1} (-\Delta)^{s}\phi (x_0,t_0), \quad & (x_0,t_0)\in Q_T,\\[0.2cm]
\min\left\{\phi(x_0,0)-(v_0)^*(x),\phi_t(x_0,0)+ \max\{\phi_x(x_0,0),0\}^{m-1} (-\Delta)^{s}\phi (x_0,0)\right\}, \quad & (x_0,t_0)\in \R\times\{0\},
\end{array}\right.
\end{split}
\end{equation*}
and
\begin{equation*}
\begin{split}
&\limsup_{(h,t,x,\xi)\to (0,t_0,x_0,0)} S(h,t, \phi(x,t)+\xi+\eta_h,\phi(\cdot,t-\tau)+\xi)
\\& \quad  \leq  \left\{\begin{array}{ll}
\phi_t(x_0,t_0)+ \max\{\phi_x(x_0,t_0),0\}^{m-1} (-\Delta)^{s}\phi (x_0,t_0), \quad & (x_0,t_0)\in Q_T,\\[0.2cm]
\max\left\{\phi(x_0,0)-(v_0)_*(x),\phi_t(x_0,0)+ \max\{\phi_x(x_0,0),0\}^{m-1} (-\Delta)^{s}\phi (x_0,0)\right\}, \quad & (x_0,t_0)\in \R\times\{0\}.
\end{array}\right.
\end{split}
\end{equation*}
\end{lemma}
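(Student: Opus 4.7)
The plan is to split the analysis into the interior case $(x_0,t_0) \in Q_T$ and the initial case $(x_0,t_0) \in \R \times \{0\}$, handling the liminf and limsup statements in parallel. In each case the first step is to invoke translation invariance of $\mathcal{L}$ (Lemma \ref{lem:mono1}(c)) to absorb the additive shift $\xi$, reducing the nonlocal term to $\mathcal{L}(h, \phi(x,t) \mp \eta_h, \phi(\cdot,t))$.

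For the interior case, since $\tau = o(1)$, along any sequence converging to $(t_0,x_0,0)$ with $t_0 > 0$ eventually $t \geq \tau$, so only the evolutive branch of $S$ contributes and
\[
S(h,t,\phi(x,t)+\xi \mp \eta_h,\phi+\xi) = \frac{\phi(x,t+\tau)-\phi(x,t)}{\tau} \pm \frac{\eta_h}{\tau} + \mathcal{L}(h, \phi(x,t) \mp \eta_h, \phi(\cdot,t)).
\]
The difference quotient tends to $\phi_t(x_0,t_0)$ by Taylor expansion since $\phi \in C^2_b$, and $\eta_h/\tau \to 0$ by hypothesis. For the nonlocal term, linearity of the fractional Laplacian in its central-value argument gives
\[
(-\Delta)_h^s\big|_{(\phi(x,t) \mp \eta_h,\,\phi(\cdot,t))} = (-\Delta)_h^s\phi(x,t) \mp \eta_h \sum_{k\neq 0} \omega_k,
\]
and by \eqref{as:m} the sum is $O(h^{-2s})$. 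Since $\eta_h/h^{2s \vee 1} \to 0$ and $h^{2s \vee 1} \leq h^{2s}$ for $h<1$, this perturbation vanishes; combined with the consistency \eqref{as:cons} applied to $\phi \in C^2 \cap C_b$, the modified fractional Laplacian converges to $(-\Delta)^s\phi(x_0,t_0)$. For $D_h$, depending on the sign of the perturbed $(-\Delta)_h^s$, it equals either $\tfrac{\phi(x+h,t)-\phi(x,t) \pm \eta_h}{h}$ or $\tfrac{\phi(x,t)\mp\eta_h - \phi(x-h,t)}{h}$; the extra $\pm \eta_h/h$ vanishes using $\eta_h/h^{2s \vee 1} \to 0$ and $h^{2s \vee 1} \leq h$, and either one-sided difference tends to $\phi_x(x_0,t_0)$. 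Hence $|D_h|^{m-1} \to |\phi_x(x_0,t_0)|^{m-1}$ and the full $S$ converges (not only in liminf/limsup) to $\phi_t(x_0,t_0) + |\phi_x(x_0,t_0)|^{m-1}(-\Delta)^s\phi(x_0,t_0)$.

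For the initial case, the liminf (resp.\ limsup) over sequences with $t \to 0$ decomposes into the minimum (resp.\ maximum) of the restricted limits along sequences with $t < \tau$ and with $t \geq \tau$. Along $t < \tau$ sequences, $S = \phi(x,t) + \xi - \eta_h - v_0(x)$, whose liminf as $(h,t,x,\xi) \to (0,0,x_0,0)$ equals $\phi(x_0,0) - \limsup_{x \to x_0} v_0(x) = \phi(x_0,0) - (v_0)^*(x_0)$, with the symmetric expression $\phi(x_0,0) - (v_0)_*(x_0)$ for the $+\eta_h$ variant. Along $t \geq \tau$ sequences, the interior argument produces the PDE expression at $(x_0,0)$. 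Taking the minimum (resp.\ maximum) of the two candidate values yields the stated initial-condition inequalities.

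The main technical obstacle is the discontinuity of the upwind operator $D_h$: it switches branches based on the sign of $(-\Delta)_h^s$, and the $\mp \eta_h$ perturbation can in principle change that sign. This is harmless because in either branch the one-sided difference converges to the same classical derivative $\phi_x(x_0,t_0)$ and the spurious $\pm \eta_h/h$ vanishes. The scaling condition $\eta_h / h^{2s \vee 1} \to 0$ is precisely calibrated to absorb the worst-case amplifications $h^{-2s}$ in the fractional Laplacian and $h^{-1}$ in the upwind gradient, making it the quantitative backbone of the argument.
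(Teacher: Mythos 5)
Your proof is correct and follows essentially the same route as the paper's: split into interior ($t_0>0$) and initial ($t_0=0$) cases, eliminate $\xi$ by translation invariance of $\mathcal{L}$, Taylor-expand the Euler quotient and use \eqref{as:cons} for the spatial operator, absorb the $\eta_h$ perturbations via the scaling $\eta_h/h^{2s\vee 1}\to 0$ (which dominates both $\eta_h h^{-2s}$ and $\eta_h h^{-1}$), and for the initial layer take the min/max over the $t<\tau$ and $t\geq\tau$ subsequences. Your explicit remark about why the branch switching of the upwind $D_h$ is harmless is a welcome clarification of a step the paper passes over silently.
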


\begin{proof}
 We only prove the $\limsup$ case  since the $\liminf$ case is similar.
 First observe that, by translation invariance of $\LL$ (Lemma \ref{lem:mono1} (iii)), we have
\begin{align*}
&S(h,t, \phi(x,t)+\xi +
\eta_h,\phi(\cdot,t-\tau)+\xi)\\[0.2cm]
&\quad=\left\{ 
\begin{array}{ll}
\displaystyle \frac{\phi(x,t)-\phi(x,t-\tau)+\eta_h}{\tau}
 -  \LL(h, \phi(x,t-\tau)+ \eta_h,\phi(\cdot,t-\tau)), \quad &x\in \R, \  t\in[ \tau,T),\\
\\
\phi(x,t)-v_0(x)+ \xi+ \eta_h
, \quad &x\in \R, \ t\in[0,\tau).
    \end{array} 
\right.
\end{align*}
Then note that $\frac{\phi-\phi(\cdot,\cdot-\tau)}{\tau}\to
 \phi_t$ locally uniformly as $\tau\to0$. 
Assume first $t_0\in(0,T)$. For small enough  $h$, we then have $t_0\in (\tau,T)$.
Hence by the definitions  and consistency of  $D_h$ and 
$(-\Delta)^{s}_h$ (see  \eqref{as:cons}), 
and the regularity of $\phi$,
\begin{equation*}
\begin{split}
  &\limsup_{(h,t,x,\xi)\to (0,t_0,x_0,0)}
  S(h,t, \phi(x,t)+\xi+\eta_h,\phi(\cdot,t-\tau)+\xi)\\
  & \qquad = \lim_{(h,t,x)\to (0,t_0,x_0)} \Big\{
  \phi_t(x,t)+\tfrac{\eta_h}{\tau}+ \max\big\{\phi_x(x,t)
  \pm\tfrac{\eta_h}{h},0\big\}^{m-1} \Big((-\Delta)^{s}\phi(x,t)+\eta_h\textstyle\sum_{k}\omega_k\Big) + o_h(1)\Big\}\\
& \qquad =  \phi_t(x_0,t_0)+ \max\big\{\phi_x(x_0,t_0),0\big\}^{m-1}  (-\Delta)^{s}\phi (x_0,t_0),
\end{split}
\end{equation*}
where the last step follows by
the localy uniform convergence of $D_h\phi$
and $(-\Delta)_h^{s}\phi$, the fact that $|\sum_k\omega_k|\leq C_sh^{-2s}$ by \eqref{as:m},  and the  assumed  decay rates of $\eta_h$.\footnote{\label{fn} The notation $o_\rho(1)$ is used to denote 
a reminder that goes to zero  $\rho\to0$ and depends only on $s$ and the $C^2$-norm of $\phi$ in a sufficiently large neighborhood of $(x_0,t_0)$.}

Next assume $t_0=0$. Then we can approach $t_0$ by points
from $[\tau, T)$ or by points from $[0,\tau)$ which gives different
    results by the definition of $S$:
\begin{align*}
& \limsup_{(h,t,x,\xi)\to (0,0,x_0,0)} S(h,t, \phi(x,t)+\xi+\eta_h,\phi(\cdot,t-\tau)+\xi)\\
&  \leq  \limsup_{(h,t,x,\xi)\to (0,0,x_0,0)}     \max\bigg\{ \phi_t(x,t)+\tfrac{\eta_h}{\tau}+ \max\big\{\phi_x(x,t)
  \pm\tfrac{\eta_h}{h},0\big\}^{m-1} \Big((-\Delta)^{s}\phi(x,t)+\eta_h\textstyle\sum_{k}\omega_k\Big) + o_h(1),\\
&\hspace{10.5cm} 
\phi(x,t)-v_0(x)+ \xi +\eta_h\bigg\}\\
&\quad = \max\Big\{\phi_t(x_0,0)+ \max\big\{\phi_x(x_0,0),0\big\}^{m-1} 
  (-\Delta)^{s}\phi (x_0,0), \,  \phi(x_0,0)-(v_0)_*( x_0)\Big\}.\qedhere
\end{align*}
\end{proof}

Convergence of the schemes will follow from the arguments of Barles-Perthame-Souganidis using so-called ``half-relaxed limits'' of $V_h$:
\[
\overline{v}(x,t)=\limsup_{(y,s,h)\to (x,t,0^+)} V_h(y,s)\qquad\text{and} \qquad \underline{v}(x,t)=\liminf_{(y,s,h)\to (x,t,0^+)} V_h(y,s),
\]
where $V_h$ is the extension of $V$ defined iteratively by  \eqref{eq:schmBS}
and \eqref{eq:defSchBS}.
Since $V_h$ is continuous in space and c\`adl\`ag/RCLL
in time, and by
Corollary \ref{coro:prese} uniformly bounded, $\overline{v}$ and
$\underline{v}$ are bounded upper and lower semicontinuous functions
respectively. Note that since $V_h$ can have both increasing and decreasing
jumps, it is neither upper nor lower semicontinuous in general. 

By a simple adaptation to our context of the argument of
\cite{BaSo91}, it follows that $\overline{v}$ and $\underline{v}$ are
sub and supersolutions of 
\eqref{eq:integPp}-\eqref{eq:integPini}.

\begin{theorem}[Identification of limits]\label{thm:BS}
Under the assumptions of Corollary \ref{coro:prese}, \eqref{as:cons},
and $v_0$  is bounded,  
it follows that $\overline{v}$ (resp. \underline{v}) is a viscosity subsolution (resp. supersolution) of \eqref{eq:integPp}-\eqref{eq:integPini}. 
\end{theorem}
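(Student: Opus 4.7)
The plan is to implement the half-relaxed limit argument of Barles--Perthame--Souganidis \cite{BaSo91} in our setting. I sketch the subsolution claim for $\overline v$; the supersolution claim for $\underline v$ is symmetric. Since each $V_h(\cdot,t)$ is nondecreasing by Corollary \ref{coro:prese}(a), so is its pointwise $\limsup$ $\overline v(\cdot,t)$, and by Lemma \ref{lem:inctestfunc} it suffices to consider nondecreasing test functions $\phi\in C^2\cap C_b(\overline{Q_T})$. Fix such a $\phi$ and a global maximum $(x_0,t_0)$ of $\overline v-\phi$; by a standard perturbation we may assume the maximum is strict.

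Next, transfer the maximum to $V_h$. The difference $V_h-\phi$ is USC in $(x,t)$ (continuous in $x$, cadlag in $t$) and uniformly bounded by Corollary \ref{coro:prese}(b), so it attains its supremum on every compact cylinder. Working on $\overline{B_R(x_0)}\times[0,T]$ for $R$ large, the strictness of $(x_0,t_0)$ together with the definition of the half-relaxed $\limsup$ force a subsequence of maximum points $(x_n,t_n)\to(x_0,t_0)$ of $V_{h_n}-\phi$ in the interior, with $\xi_n:=(V_{h_n}-\phi)(x_n,t_n)\to 0$. The scheme \eqref{eq:schmBS} gives the equality $S(h_n,t_n,V_{h_n}(x_n,t_n),V_{h_n})=0$. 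After promoting the ordering $V_{h_n}(\cdot,t_n)\le\phi(\cdot,t_n)+\xi_n$ from local to global on $\R$ (see below), Lemma \ref{lem:mono1}(a) together with the translation invariance (c), applied at the point $(x_n,t_n)$ where the two nondecreasing functions agree, yield an inequality of the form
\[
S\bigl(h_n,t_n,\phi(x_n,t_n)+\xi_n-\eta_n,\phi+\xi_n\bigr)\ \le\ o(1),
\]
where the slack $\eta_n\to 0$ is chosen to satisfy $\eta_n/\tau+\eta_n/h_n^{2s\vee 1}\to 0$. Passing to the limit via the $\limsup$ half of the consistency Lemma \ref{lem:consistency}, with $(x,t,\xi)=(x_n,t_n,\xi_n)\to(x_0,t_0,0)$, produces the viscosity subsolution inequality \eqref{eq:viscInt} when $t_0>0$ and the relaxed initial-data inequality \eqref{eq:viscIniCond} when $t_0=0$.

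The main obstacle is precisely this promotion to a global ordering, which Lemma \ref{lem:mono1}(a) demands because of the nonlocal nature of $\flap_h$. Outside the localisation cylinder, $V_h$ is only known to be uniformly bounded by Corollary \ref{coro:prese}(b) and to approach $0$ and $M$ at $\mp\infty$ by Lemma \ref{lem:limits}, so the inequality $V_h\le\phi+\xi_n$ generically fails there. The gap is closed by perturbing $\phi$ by a small, smooth, nondecreasing compensator $\eta_n\Psi$ that dominates the $L^\infty$ bound on $V_h$ outside a large ball, or equivalently by truncating the fractional-Laplacian tail controlled by \eqref{as:m}; both options introduce precisely the slack $\eta_n$ above, and the delicate point is balancing the smallness of $\eta_n$ required by Lemma \ref{lem:consistency} against the size of this tail/compensator error. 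Everything else is standard BS machinery, and the cadlag nature of $V_h$ in time causes no real difficulty since the half-relaxed limits already involve $\limsup$/$\liminf$.
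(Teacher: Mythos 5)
Your overall architecture (half‑relaxed limits, nondecreasing test functions via Lemma \ref{lem:inctestfunc}, monotonicity of the one‑step map, the $\limsup$ half of Lemma \ref{lem:consistency}) matches the paper's, but two technical points go wrong and they are precisely the ones the paper is careful about. First, the claim that $V_h-\phi$ is USC in $(x,t)$ ``(continuous in $x$, cadlag in $t$)'' is false: a cadlag function with a downward jump satisfies $\limsup_{s\uparrow t}g(s)>g(t)$ and hence is \emph{not} USC, and the paper explicitly notes that $V_h$ has both increasing and decreasing jumps in time, so is neither USC nor LSC. Consequently your maximum points $(x_n,t_n)$ on a compact cylinder need not exist. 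The slack $\eta_n$ with $\eta_n/\tau+\eta_n/h^{2s\vee 1}\to 0$, which you attribute to a tail compensator, is in the paper's proof introduced to deal with exactly this failure of USCness: one works with an \emph{approximate} supremizing sequence $(y_n,t_n)$ satisfying $(V_{h_n}-\phi)(y_n,t_n)+\eta_n>\sup_{\overline{Q_T}}(V_{h_n}-\phi)=:\xi_n$, and the consistency lemma is built to absorb that slack.

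Second, the localization to $\overline{B_R(x_0)}\times[0,T]$ and the subsequent ``promotion to global ordering'' is a problem you created, not one that is present in the paper's argument. Definition \ref{def:viscsol1} already works with \emph{global} maxima of $\overline v-\phi$ on $\overline{Q_T}$, and once $\xi_n$ is defined as the global supremum $\sup_{\overline{Q_T}}(V_{h_n}-\phi)$ (finite since both functions are bounded, and $\to 0$ by the strictness of the global max), the ordering $V_{h_n}\le\phi+\xi_n+\eta_n$ holds on all of $\overline{Q_T}$ automatically and can be fed directly into the monotonicity of the scheme (Theorem \ref{theo:mono})---no compensator is ever needed. Your compensator route is not unheard of for nonlocal equations, but here it complicates matters without payoff, and as sketched it does not actually supply a valid argument because the existence step still invokes nonexistent exact maxima. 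Fixing the USC misstatement and dropping the needless localization would bring your outline in line with the paper.
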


 We sketch the proof for completeness. 
\begin{proof}
 We only prove the subsolution part, the supersolution part is similar.  Let $\phi\in C^2\cap C_b(\overline{Q_T})$ and $(x_0,t_0)\in \overline{Q_T}$
such that $\overline{v}-\phi$ reaches a  strict  global maximum in
$\overline{Q_T}$, $\overline{v}(x_0,t_0)=\phi(x_0,t_0)$ (and thus
$\overline{v}< \phi$). 
By the definition of $\overline{v}$ there
is a sequence $(y_n,t_n,h_n,\tau_n)\in \overline{Q_T}\times(0,1)^2$ such that,\footnote{ The assumptions of Corollary \ref{coro:prese} includes CFL conditions forcing $\tau\to0$ as $h\to0$.} 
\[
(y_n,t_n,h_n,\tau_n)\to (x_0,t_0,0^+,0^+)\qquad \textup{and}\qquad
V_{h_n}(y_n,t_n)\to \overline{v} (x_0,t_0). 
\]
Moreover,  going to subsequences if necessary,  for every $n$ we can take $(y_n,t_n)$ to be  so  close to the global supremum of $V_{h_n}-\phi$  that 
$$(V_{h_n} - \phi)(y_n,t_n) +\eta_{h_n} > \sup_{\overline{Q_T}}\,(V_{h_n} -
\phi)=: \xi_n$$
 for $\eta_{h_n}\geq0$ satisfying $\eta_{h_n}(\frac{1}{\tau_n}+\frac1{h_n^{\sigma\vee1}})\to0$  
as $n\to \infty$.\footnote{For upper semicontinuous $V_h$, the  $\sup$ is a  $\max$ 
and we can take
$\eta_{h_n}=0$ \cite{BaSo91}, see e.g. Lemma 4.2 and proof in
\cite{Bar94}. Here $V_h$ is not upper semicontinuous and the proof
must be slightly modified,  cf.  \cite{mopost}. 
}
It then follows that 
 $\xi_n \to0$,
\[
 V_{h_n}(y_n,t_n)  > \phi (y_n,t_n)+\xi_n-\eta_{h_n},  \qquad\text{and} \qquad
V_{h_n}(x,t)\leq \phi(x,t) +\xi_n \quad \textup{in}\quad \overline{Q_T}.
\]
Then, by first using monotonicity of the scheme given by  Remark \ref{rem:monotonicity}, 
and then the translation properties of $S$, see \eqref{eq:defSchBS} and Lemma
\ref{eq:defLcal} (iii), we have
\begin{equation*}
\begin{split}
0&=S(h_n,t_n, V_{h_n}(y_n,t_n),V_{h_n}(\cdot,t_n-\tau_n))\\
&\geq S(h_n,t_n, \phi(y_n,t_n)+\xi_n-\eta_{h_n},\phi(\cdot,t_n-\tau_n)+\xi_n).
\end{split}
\end{equation*}
 Thus, taking $\liminf$ and using consistency   (Lemma \ref{lem:consistency}), we find that $\overline{v}$ is a viscosity subsolution of \eqref{eq:integPp}-\eqref{eq:integPini} at $(x_0,t_0)$.
\end{proof}

%

Now give the convergence results, first for uniformly continuous data.
\begin{corollary}[Convergence I]\label{coro:comvinteg1}
Under the assumptions of Corollary \ref{coro:prese}, \eqref{as:cons},
and $v_0\in BUC(\R)$, 
 it  follows that
\[
V_h \to v \quad \textup{as} \quad h\to0 \quad  \textup{locally
  uniformly in} \quad \overline{Q}_T,
\]
where $v$ is the unique viscosity solution  of \eqref{eq:integPp}-\eqref{eq:integPini}.
\end{corollary}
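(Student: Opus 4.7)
The plan is to run the standard Barles--Perthame--Souganidis half-relaxed limits argument, combining the identification result of Theorem \ref{thm:BS} with the strong comparison principle of Theorem \ref{thm:strongCP}. Define
\[
\overline{v}(x,t) = \limsup_{(y,s,h)\to(x,t,0^+)} V_h(y,s) \qquad \text{and} \qquad \underline{v}(x,t) = \liminf_{(y,s,h)\to(x,t,0^+)} V_h(y,s),
\]
which are well defined and bounded on $\overline{Q}_T$ by the $\ell^\infty$-stability in Corollary \ref{coro:prese}\eqref{theo:st}. By construction $\underline{v}\leq \overline{v}$ on $\overline{Q}_T$.

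Next I apply Theorem \ref{thm:BS}: under \eqref{as:cons}, \eqref{as:m}, \eqref{as:CFL1} and the fact that $v_0$ is bounded and nondecreasing (which is preserved by the scheme via Corollary \ref{coro:prese}\eqref{theo:non}), the function $\overline{v}$ is a viscosity subsolution and $\underline{v}$ is a viscosity supersolution of \eqref{eq:integP}-\eqref{eq:integPini}. Then, since $v_0\in BUC(\R)$ so that $(v_0)^*=v_0=(v_0)_*$, Lemma \ref{lem:inidialData} yields
\[
\overline{v}(x,0)\leq v_0(x)\leq \underline{v}(x,0)\qquad\text{for all } x\in\R,
\]
which is the trace information needed to invoke the strong comparison principle.

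With this, the hypotheses of Theorem \ref{thm:strongCP} are met (both $\overline{v}$ and $\underline{v}$ correspond to the same BUC initial datum $v_0$), so $\overline{v}\leq \underline{v}$ in $\overline{Q}_T$. Combined with the reverse inequality that always holds, this gives $\overline{v}=\underline{v}=:v$, and $v$ is therefore continuous on $\overline{Q}_T$, a viscosity subsolution and supersolution simultaneously, hence \emph{the} unique viscosity solution of \eqref{eq:integP}-\eqref{eq:integPini}. Local uniform convergence $V_h\to v$ on $\overline{Q}_T$ then follows by a standard Dini-type argument: on any compact set $K\subset\overline{Q}_T$, the equality of the upper and lower half-relaxed limits with the continuous function $v$ forces $\sup_K|V_h-v|\to 0$ (otherwise one extracts a sequence violating either the definition of $\overline{v}$ or of $\underline{v}$).

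The main obstacle is really packaged inside Theorem \ref{thm:BS} rather than in this corollary: the delicate point was using monotonicity of the scheme in the class of nondecreasing functions together with the reduction of test functions in Lemma \ref{lem:inctestfunc}, and handling the lack of upper semicontinuity of $V_h$ via the small perturbation $\eta_n$ in the consistency Lemma \ref{lem:consistency}. Once Theorem \ref{thm:BS} is in hand, the rest of the corollary is a routine assembly of half-relaxed limits, trace identification through Lemma \ref{lem:inidialData}, and strong comparison Theorem \ref{thm:strongCP}.
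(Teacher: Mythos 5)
Your proof is correct and follows essentially the same route as the paper's: form the half-relaxed limits, identify them as sub/supersolutions via Theorem \ref{thm:BS}, invoke strong comparison (Theorem \ref{thm:strongCP}) to force $\overline{v}=\underline{v}$, and then upgrade pointwise to locally uniform convergence by a standard compactness/extraction argument. Two minor remarks that do not affect correctness: the explicit call to Lemma \ref{lem:inidialData} is redundant as a separate step, since that trace inequality is already established internally in the proof of Theorem \ref{thm:strongCP}; and the ``Dini-type'' label for the final step is a misnomer (Dini is for monotone convergence), though the extraction argument you then describe is the right one, cf.\ Lemma 4.1 in \cite{Bar94}.
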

\begin{proof}
By definition $\overline{v}\geq \underline{v}$. By Theorem \ref{thm:BS},
$\overline{v}$ is a viscosity subsolution and $\underline{v}$ is a
viscosity supersolution, and then
$\overline{v}\leq \underline{v}$ by comparison (Theorem
\ref{thm:strongCP}). Consequently, 
\[
v(x,t):=\overline{v}(x,t)=\underline{v}(x,t)= \lim_{(y,s,h)\to (x,t,0^+)} V_h(y,s).
\]
is the unique viscosity solution of
\eqref{eq:integPp}-\eqref{eq:integPini}. Since
$\limsup_{h\to0}V_h\leq \overline{v}=\underline{v}\leq
\liminf_{h\to0}V_h$, $V_h\to v$ pointwise. Uniform convergence of $V_h$
then follows by 
standard arguments, see e.g. Lemma 4.1 in \cite{Bar94}.
\end{proof}

 Then  we present the result for discontinuous data.

\begin{corollary}[Convergence II]\label{coro:comvdirac}
Under the assumptions of
Corollary \ref{coro:prese} and \eqref{as:cons}, if $v_0$ is  bounded,  
and that there is a
discontinuous viscosity solution $\mathcal{V}$  of
\eqref{eq:integPp}-\eqref{eq:integPini}, then
\[
V_h(x,t)\to \mathcal{V}(x,t) \quad\text{as} \quad h\to 0  \qquad
\textup{for all} \qquad (x,t)\in \mathcal C_\mathcal{V},
\]
where $\mathcal C_\mathcal{V}$ is the set of points where
  $\mathcal{V}$ is continuous. Moreover, the convergence is locally
uniform in any ball contained in $\mathcal C_\mathcal{V}$.
\end{corollary}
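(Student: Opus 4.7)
The plan is to mimic the proof of Corollary \ref{coro:comvinteg1} but replace the strong comparison of Theorem \ref{thm:strongCP} by the partial comparison for discontinuous viscosity solutions in Theorem \ref{thm:comppp2}, which is designed precisely to handle the case where $v_0$ has jumps. Let $V_h$ be the extension of the numerical solution defined by \eqref{eq:schmBS}-\eqref{eq:defSchBS}, and introduce the half-relaxed limits
\[
\overline{v}(x,t)=\limsup_{(y,s,h)\to(x,t,0^+)}V_h(y,s),\qquad \underline{v}(x,t)=\liminf_{(y,s,h)\to(x,t,0^+)}V_h(y,s).
\]
By Corollary \ref{coro:prese}\eqref{theo:st} and \eqref{theo:non}, $V_h$ is uniformly bounded and $V_h(\cdot,t)$ is nondecreasing, so $\overline{v}$ and $\underline{v}$ are bounded USC and LSC respectively. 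Theorem \ref{thm:BS} then gives that $\overline{v}$ is a viscosity subsolution and $\underline{v}$ is a viscosity supersolution of \eqref{eq:integP}-\eqref{eq:integPini}.

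Next I would invoke Theorem \ref{thm:comppp2} with the discontinuous viscosity solution $\mathcal{V}$ to obtain
\[
\overline{v}\leq(\mathcal{V}_*)^*\qquad\text{and}\qquad(\mathcal{V}^*)_*\leq\underline{v}\qquad\text{in }Q_T.
\]
At any $(x,t)\in\mathcal{C}_\mathcal{V}$ we have $\mathcal{V}^*(x,t)=\mathcal{V}_*(x,t)=\mathcal{V}(x,t)$, so $(\mathcal{V}_*)^*(x,t)=(\mathcal{V}^*)_*(x,t)=\mathcal{V}(x,t)$, and combining with $\underline{v}\leq\overline{v}$ we obtain
\[
\mathcal{V}(x,t)=(\mathcal{V}^*)_*(x,t)\leq\underline{v}(x,t)\leq\overline{v}(x,t)\leq(\mathcal{V}_*)^*(x,t)=\mathcal{V}(x,t).
\]
Hence $\overline{v}(x,t)=\underline{v}(x,t)=\mathcal{V}(x,t)$ at every continuity point, and the definition of the half-relaxed limits translates directly into pointwise convergence $V_h(x,t)\to\mathcal{V}(x,t)$ for $(x,t)\in\mathcal{C}_\mathcal{V}$.

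For the local uniform convergence on any ball $B\Subset\mathcal{C}_\mathcal{V}$, I would apply a standard Dini-type argument: on $\overline B$, $\mathcal{V}$ is continuous, and one uses the USC of $\overline{v}-\mathcal{V}$ and the LSC of $\mathcal{V}-\underline{v}$ together with their pointwise vanishing to promote the convergence to uniform on $\overline B$ (this is the same argument invoked via Lemma 4.1 of \cite{Bar94} at the end of the proof of Corollary \ref{coro:comvinteg1}). The main technical point I expect to be most delicate is the application of Theorem \ref{thm:comppp2}: one must check that our $\overline{v}$ and $\underline{v}$ are genuine sub/supersolutions up to $t=0$ (already provided by Theorem \ref{thm:BS}) and that the nondecreasing assumption built into Theorem \ref{thm:comppp2} applies to $\mathcal{V}$, which follows because $V_h(\cdot,t)$ is nondecreasing (Corollary \ref{coro:prese}\eqref{theo:non}) and the half-relaxed limits preserve monotonicity in $x$.
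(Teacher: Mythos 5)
Your proposal is correct and follows essentially the same route as the paper: half-relaxed limits, Theorem \ref{thm:BS} for the sub/supersolution property, and the partial comparison of Theorem \ref{thm:comppp2} evaluated at continuity points of $\mathcal{V}$, with the Barles-type argument for uniform convergence on balls. One minor imprecision in your final paragraph: the nondecreasing hypothesis in Theorem \ref{thm:comppp2} is on the initial datum $\mathcal{V}_0=v_0$ (which the corollary already assumes to be bounded and nondecreasing), not on $\mathcal{V}$ or on the half-relaxed limits, so no monotonicity of $V_h$ needs to be invoked to apply that theorem.
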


\begin{proof}
As before,  $\overline{v}\geq \underline{v}$ and $\overline{v}$ and
$\underline{v}$ are viscosity sub and supersolutions of
\eqref{eq:integPp}-\eqref{eq:integPini}. Let $\mathcal V$ be continuous
at $(x_0,t_0)$. By comparison, Theorem \ref{thm:comppp2},
\[
\overline{v}(x_0,t_0)\leq (\mathcal{V}_*)^*(x_0,t_0)=\mathcal{V}=(\mathcal{V}^*)_*(x_0,t_0)\leq \underline{v}(x_0,t_0).
\]
Pointwise and then uniform convergence follows as in the proof of
Corollary \ref{coro:comvinteg1}.
\end{proof}

We will now prove the first of the main convergence results of the paper.

\begin{proof}[Proof of Theorem \ref{thm:main}] 
(a) Since $\mu_0\in L^1(\R)$, $v_0\in BUC(\R)$ and by Corollary
\ref{coro:comvinteg1}, $V_h\to v$ locally uniformly in
$\overline{Q}_T$ as $h\to0$,  where $v$ is the unique viscosity solution of \eqref{eq:integPp}-\eqref{eq:integPini}. By Lemma \ref{lem:incr}, $v$ is nondecreasing, and then by Lemma \ref{lem:coincide} it is a viscosity solution of \eqref{eq:integP}-\eqref{eq:integPini} (which is unique by comparison, Theorem \ref{thm:strongCP}).  Here $V_h$ is defined via the scheme
posed in the whole space, while $\overline{V}_h$ is the interpolation of the
scheme defined on the grid. By Theorems \ref{thm:l1linfv}, $v\in BUC$,
and then for $(x,t)\in[x_{i-1},x_i]\times[t_j,t_{j+1})$,\footnote{ The $o_h(1)$ notation  
denotes 
a reminder that goes to zero  as $h\to0$ and depends only on the modulus of continuity of  $v$.} 
\[
\begin{split}
  |\overline{V_h}(x,t)-v(x,t)|
&\leq  \frac{x_i-x}{h} |V_{i-1}^j -v(x_{i-1},t_j)| + \frac{x-x_{i-1}}{h} |V_{i}^j -v(x_{i},t_j)| + o_h(1)\\
&\leq  \frac{x_i-x}{h} |V_h(x_{i-1},t_j) -v(x_{i-1},t_j)| + \frac{x-x_{i-1}}{h} |V_h(x_{i},t_j) -v(x_{i},t_j)|+ o_h(1)\\
& \leq \sup_{(y,\rho)\in[x_{i-1},x_i]\times[t_j,t_{j+1})} |V_h(y,\rho)-v(y,\rho)|+o_h(1).
\end{split}
\]
This implies that $\overline{V}_h\to v$ as $h\to0$ locally uniformly in $\overline{Q_T}$.
\medskip

\noindent (b) By Theorem \ref{thm:v}, $v\in BUC (\overline{Q}_T\setminus B_{\veps}(0,0))$ for all $\veps>0$. Thus by Corollary \ref{coro:comvdirac}, 
\[
v(x,t)=  \lim_{(y,s)\to (x,t)} V_h(y,s) \quad \textup{for all} \quad (x,t)\in \overline{Q}_T\setminus B_{\veps}(a,0)).
\]
This implies that $V_h\to v$ locally uniformly in
$\overline{Q}_T\setminus B_{\veps}(a,0)$ as $h\to0$. Arguing as in the
proof of part (a), we conclude the same local uniform convergence for $\overline{V}_h$.
\end{proof}

\section{Analysis of the discretization of the fractional pressure
  equation
}\label{sec:propUandconv}
 Recall that $\mu_0\in \mathcal{M}^{+}(\R)$,  $s=1-\sigma$ , and  $V_i^j=V(x_i,t_j)$
solves the scheme for 
\eqref{eq:integP}-\eqref{eq:integPini}, 
\begin{equation}\label{eq:nums22}\tag{S}
V_i^{j+1}=V_i^j + \tau L_h V_i^{j}\qquad\text{for} \qquad i\in\Z, \quad j\in \N,
\end{equation}
with $V_i^0= v_0(x_i):= \int_{-\infty}^{ x_i} \dd \mu_0(y)$, and that the
approximation $U$ of \eqref{eq:maineq}-\eqref{eq:maineqini} is defined
from $V$ by
\begin{equation}\label{eq:relUV}
U_i^j= \frac{V_i^j-V_{i-1}^j}{h} \qquad \textup{or equivalently} \qquad  V_i^j= h \sum_{k=-\infty}^i U_k^j\qquad\text{for} \qquad i\in\Z, \quad j\in \N.
\end{equation}
 We also recall the definition of the interpolants $\overline{U}_h:\overline{Q}_T\to\R$ and
$\overline{V}_h:\overline{Q}_T\to\R$,  
\begin{align}\label{eq:interpU}
\overline{U}_h(x,t)&=U_i^j \qquad \textup{if} \qquad
(x,t)\in[x_{i-1},x_i)\times[t_j,t_{j+1}), \\[0.2cm]
\label{eq:interpV}
\overline{V}_h(x,t)& = \frac{x_i-x}{h} V_{i-1}^j + \frac{x-x_{i-1}}{h} V_i^j  \qquad \textup{if} \qquad (x,t)\in[x_{i-1},x_i)\times[t_j,t_{j+1}).
\end{align}
The relation \eqref{eq:relUV} allows us to inherit properties from Corollary \ref{coro:prese} and
Lemma \ref{lem:limits} to $U_h$ and $\bar U_h$. 

\begin{corollary}\label{cor:propschu}
Assume $s\in(0,1)$, $m\geq2$, $h,\tau>0$, \eqref{as:m}, and
either \eqref{as:CFL1} or \eqref{as:CFL2}. Then 
\begin{enumerate}[\rm (a)]
\item (Positivity) \ $U_{i}^j \geq 0$ \ for $i\in \Z$, $j\in \N$,\quad
  and\quad $\overline{U}_h\geq0$ \ in $\overline{Q}_T$.
\item\label{cor:propschu-itemCons} (Conservation of mass) \ $\displaystyle h\sum_{i=-\infty}^\infty
  U_i^j = \int_{\R}\overline{U}_h(x,t)\,\dd  x =M$\quad for\quad $j\in \N$, \ $t\in
  [t_j,t_{j+1})$. 
\item (Maximum principle) If $\mu_0\in L^\infty(\R)$, then $\displaystyle \sup_{i\in \Z, j\in \N} |U_i^j|=\|\overline{U}_h\|_{L^\infty(Q_T)}\leq\|\mu_0\|_{L^\infty(\R)} $.
\end{enumerate}
\end{corollary}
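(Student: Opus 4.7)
The strategy is to transfer the already-established properties of $V$ (from Corollary \ref{coro:prese} and Lemma \ref{lem:limits}) to $U$ and $\overline U_h$ using the defining identity $U_i^j=(V_i^j-V_{i-1}^j)/h$. First I would verify the hypotheses of Corollary \ref{coro:prese} from our assumptions: setting $v_0(x)=\int_{-\infty}^x \dd\mu_0(y)$, we have $\|v_0\|_\infty\leq \mu_0(\R)=M$ (so \eqref{as:CFL1} gives exactly the required bound), and if in addition $\mu_0\in L^\infty(\R)$ with density bounded by $L=\|\mu_0\|_\infty$, then $|v_0(x_{i+1})-v_0(x_i)|\leq Lh$ and \eqref{as:CFL2} applies. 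Moreover $V^0$ is nondecreasing with $\lim_{i\to-\infty}V^0_i=0$ and $\lim_{i\to+\infty}V^0_i=M$, so Lemma \ref{lem:limits} applies. This sets up everything needed for the three parts.

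For part (a) (positivity), the monotonicity in $i$ given by Corollary \ref{coro:prese}\eqref{theo:non}, namely $V_{i-1}^j\leq V_i^j$, yields $U_i^j\geq0$ immediately, and then $\overline U_h\geq0$ on $\overline{Q}_T$ follows from the definition \eqref{eq:interpU}.

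For part (b) (conservation of mass), I would use a telescoping argument. Since $V_i^j$ is nondecreasing in $i$, bounded by $M$ and by Lemma \ref{lem:limits} satisfies $\lim_{i\to-\infty}V_i^j=0$ and $\lim_{i\to+\infty}V_i^j=M$ for every $j\in\N$, we obtain
\[
h\sum_{i=-\infty}^{\infty} U_i^j = \sum_{i=-\infty}^{\infty}(V_i^j-V_{i-1}^j) = \lim_{i\to+\infty}V_i^j - \lim_{i\to-\infty}V_i^j = M.
\]
The integral identity then follows because $\overline U_h(\cdot,t)$ is the piecewise constant function taking value $U_i^j$ on $[x_{i-1},x_i)$ of length $h$, so $\int_{\R}\overline U_h(x,t)\dd x=h\sum_i U_i^j=M$.

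For part (c) (maximum principle), I would invoke the Lipschitz stability of Corollary \ref{coro:prese}\eqref{theo:lip}, which gives
\[
\sup_{i\in\Z,\,j\in\N}\frac{|V_{i+1}^j-V_i^j|}{h}\leq \sup_{i\in\Z}\frac{|V_{i+1}^0-V_i^0|}{h}.
\]
When $\mu_0\in L^\infty(\R)$, the initial quotient is bounded by $\|\mu_0\|_{L^\infty(\R)}$ since $V_{i+1}^0-V_i^0=\int_{x_i}^{x_{i+1}}\dd\mu_0(y)\leq h\|\mu_0\|_{L^\infty(\R)}$. Combined with positivity from part (a), this gives $0\leq U_i^j\leq\|\mu_0\|_{L^\infty(\R)}$, and the $L^\infty$ bound on $\overline U_h$ follows from its pointwise definition.

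The only non-routine point is ensuring the telescoping series in (b) converges to the claimed value, which requires the asymptotic behaviour preserved by Lemma \ref{lem:limits}; the rest is bookkeeping.
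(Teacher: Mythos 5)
Your proposal is correct and mirrors the paper's proof in all three parts: (a) uses monotonicity of $V$ in $i$ from Corollary \ref{coro:prese}\eqref{theo:non} plus the piecewise-constant interpolation; (b) rewrites the integral as $h\sum_i U_i^j$ and telescopes the partial sums of $V_i^j-V_{i-1}^j$ using Lemma \ref{lem:limits}; (c) applies the Lipschitz stability Corollary \ref{coro:prese}\eqref{theo:lip} and bounds the initial discrete Lipschitz constant by $\|\mu_0\|_\infty$. The brief verification you add that $\|v_0\|_\infty\leq\mu_0(\R)=M$ and $|v_0(x_{i+1})-v_0(x_i)|\leq\|\mu_0\|_\infty h$ to discharge the hypotheses of Corollary \ref{coro:prese} is a sensible (and slightly more explicit) touch that the paper leaves implicit.
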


\begin{proof}
(a) Since $V_i^j\leq V_{i+1}^j$ for all $i\in \Z$ and all $j\in \N$, it
  is clear that $U_i^j\geq0$. The result for $\overline{U}_h\geq0$
  follows by interpolation. (b) For $t\in [t_j,t_{j+1})$ and
    definition of $\bar U$,
\[
\int_{\R}\overline{U}_h(x,t)\dd x= \sum_{i=-\infty}^\infty \int_{x_{i-1}}^{x_i}\overline{U}_h(x,t)\dd x = \sum_{i=-\infty}^\infty \int_{x_{i-1}}^{x_i}U_i^j\dd x = h \sum_{i=-\infty}^\infty U_i^j.
\]
Then we conclude by the relation to $V$ and Lemma \ref{lem:limits},
\[
\sum_{i=-\infty}^\infty U_i^j= \lim_{k\to+\infty}\sum_{i=-\infty}^k
U_i^j=\lim_{k\to+\infty} \frac1hV_k^j=\frac Mh.
\]
(c) Follows from Corollary \ref{coro:prese} (f):
\[
\begin{split}
\|\overline{U}_h\|_{L^\infty(Q_T)}&=\sup_{i\in \Z, j\in \N} |U_i^j|= \frac{1}{h} \sup_{i\in \Z, j\in \N} |V_i^j-V_{i-1}^j| \leq \frac{1}{h} \sup_{i\in \Z} |V_i^0-V_{i-1}^0|\\
&=  \frac{1}{h}  \sup_{i\in \Z} |\int_{-\infty}^{x_i} \mu_0(y) \dd y-\int_{-\infty}^{x_{i-1}} \mu_0(y) \dd y|\leq \frac{1}{h} \|\mu_0\|_{L^\infty(\R)} \sup_{i\in \Z} \int_{x_{i-1}}^{x_i}\dd y= \|\mu_0\|_{L^\infty(\R)}.\quad\qedhere
\end{split}
\]
\end{proof}

We now prove our main convergence results, that $\overline U_h$
converges 
weakly-$*$ in $C_b(\R)$ to the solution $u$ of
\eqref{eq:maineq}-\eqref{eq:maineqini}. This will mostly  come as a
consequence of the convergence results for $\overline V_h$.

\begin{proof}[Proof of Theorem \ref{thm:main2}]
  We first prove part (a).
\medskip

\noindent\emph{Step 1:} First we check that $\overline{V}_h(x,t)=\int_{-\infty}^x \overline{U}_h(y,t) \dd y$ (see  Remark \ref{rem:intutov}). For $(x,t)\in [x_{i-1},x_i)\times [t_{j-1},t_j)$,
\[\begin{split}
\overline{V}_h(x,t)&= \frac{x_i-x}{h} V_{i-1}^j + \frac{x-x_{i-1}}{h} V_i^j = (x_i-x) \sum_{k=-\infty}^{i-1} U_k^j +(x-x_{i-1})\sum_{k=-\infty}^{i} U_k^j\\
&= (x-x_{i-1}) U_{i}^j + h \sum_{k=-\infty}^{i-1} U_k^j=
\int_{x_{i-1}}^x \overline{U}(y,t) \dd y + \int_{-\infty}^{x_{i-1}}
\overline{U}(y,t) \dd y. 
\end{split}
\]
Since $\overline{V}_h$ is piecewise linear, the above result implies $\partial_x\overline{V}_h(x,t)= \overline{U}_h(x,t) $ for $t\in[0,T]$ and a.e. $x\in \R$.
\medskip

\noindent\emph{Step 2:}  Let $\varphi \in Lip_{1,1}$
be such that $\supp \partial_x \varphi \subset [-R,R]$  for
some $R>0$.  By conservation of mass (Corollary
\ref{cor:propschu}\eqref{cor:propschu-itemCons}),
\[
\int_{-\infty}^x (\overline{U_h}(y,t)-u(y,t))\dd y=\int_{-\infty}^x \overline{U_h}(y,t)\dd y-\int_{-\infty}^x u(y,t)\dd y \left\{\begin{split}&\stackrel{x\to+\infty}{\longrightarrow}\|u_0\|_{L^1(\R)}-\|u_0\|_{L^1(\R)}=0,\\
&\stackrel{x\to-\infty}{\longrightarrow} 0-0=0.
\end{split}\right.
\]
Hence by integration by parts, the relation between $\overline{U_h}$
and  $\overline{V_h}$,  and the properties of $\varphi$, we find that
\begin{equation*}
\begin{split}
\sup_{t\in[0,T]}\left|\int_{\R} (\overline{U_h}(x,t)-u(x,t))\varphi(x)\dd x\right|&= \sup_{t\in[0,T]} \left|\int_{\R} \left(\int_{-\infty}^x (\overline{U_h}(y,t)-u(y,t))\dd y\right) \partial_x\varphi(x)\dd x\right|\\
&=\sup_{t\in[0,T]}  \left|\int_{\R}( \overline{V_h}(x,t)-v(x,t))\partial_x \varphi(x)\dd x\right|\\
&\leq |\supp\{\partial_x\varphi\}|\|\partial_x
\varphi\|_{L^\infty(\overline{Q_T})} \sup_{(x,t)\in
   \supp\{\partial_x\varphi\}\times[0,T] } |\overline{V_h}(x,t)-v(x,t)|\\
&\leq 2 R T\|\partial_x
\varphi\|_{L^\infty(\overline{Q_T})}  \omega_{R}(h),
\end{split}
\end{equation*}
where
$$\omega_{R}(h):=\sup_{(x,t)\in B(0,R)\times[0,T]}
|\overline{V_h}(x,t)-v(x,t)|\to 0 \qquad\text{as}\qquad h\to0,$$
by local uniform
convergence of $\overline{V_h}$ to $v$ (Theorem \ref{thm:main}).
\smallskip

\noindent\emph{Step 3: } For $R>1$, let $\chi_R\in Lip_{1,1}$
be such that $0\leq \chi_R\leq 1$, 
$\chi_R(x)=1$ if $|x|\geq R$, and $\chi_R(x)=0$ if $|x|\leq R-1$.  
Since $\varphi(x):=\chi_R(x)$ has $\supp
\partial_x\varphi \subset [-R,R]\setminus [-R+1,R-1]$,  by step 2 we get
\begin{equation*}
\begin{split}
  \sup_{t\in[0,T]} \int_{\R} \overline{U}_h(x,t) \chi_R(x)\dd x
&\leq \sup_{t\in[0,T]} \left| \int_{\R} (\overline{U}_h(x,t)-u(x,t)) \chi_R(x)\dd x \right| + \sup_{t\in[0,T]} \int_{\R} u(x,t) \chi_R(x)\dd x\\
&\leq  2RT\,\omega_R(h)+ \sup_{t\in[0,T]} \int_{|x|>R-1} u(x,t) \dd x.
\end{split}
\end{equation*}

\noindent\emph{Step 4: } Assume $\varphi \in Lip_{1,1}$ is
arbitrary. Then $\varphi(1-\chi_R), \varphi\chi_R\in Lip_{1,2}$ (the
Lipschitz constants are bounded by 2), and $\supp
\varphi(1-\chi_R)\subset [-R,R]$. Recall that $\overline{U}_h,u\geq 0$
and $0\leq \chi_R\leq 1$. 
For $R>1$ we then have by adding and subtracting terms, using step 2
(with Lipschitz constant 2),
and then step 3,
\[
\begin{split}
&\sup_{t\in[0,T]}  \Big|\int_{\R} (\overline{U}_h(x,t)-u(x,t))\varphi(x)\dd x\Big| \\
&\quad \leq \sup_{t\in[0,T]} \Big|\int_{\R}
  (\overline{U}_h(x,t)-u(x,t))\varphi(x)(1-\chi_R(x))\dd x\Big|
  +\sup_{t\in[0,T]}\big|\int_{\R}
  (\overline{U}_h(x,t)-u(x,t))\varphi(x)\chi_R(x) \dd x\Big| \\
&\quad \leq 4RT\omega_R(h)
  +\sup_{t\in[0,T]}\int_{\R}
  (\overline{U}_h(x,t)+u(x,t)) \chi_R(x)\dd x \\  
&\quad \leq  6RT\omega_R(h) +2
  \sup_{t\in[0,T]}\int_{|x|>R-1} |u(x,t)| \dd x.
\end{split}
\]
By tightness  (Theorem
\ref{thm:maineq} \eqref{thm:maineq-itemtight}), the last term is
bounded by some modulus $\tilde\omega_u(\frac1R)$. Then by
the definition of the $d_0$ distance, it  follows that
$$\sup_{t\in[0,T]}d_0(\overline{U}_h(\cdot,t),u(\cdot,t))\leq 6RT\omega_R(h) +2\tilde\omega_u(\frac1R). $$
Convergence follows by first taking $R$ large enough to control
the second term and then $h$
small enough to control the first term. 
This completes the proof of (a).

To prove (b) we need to handle
the point mass at $x=a$ separately. Arguing as in (a) we find that
\[
\begin{split}
&\sup_{t\in[0,T]}  \Big|\int_{\R} (\overline{U}_h(x,t)-u(x,t))\varphi(x)\dd x\Big| \\
& \quad \leq    6RT\omega_{r,R}(h) +  \sup_{t\in[0,T]}  \Big|\int_{B(a,r)} (\overline{U}_h(x,t)-u(x,t))\varphi(x)\dd x\Big|+2
  \sup_{t\in[0,T]}\int_{|x|>R-1} |u(x,t)| \dd x\\
\end{split}
\]
where the second term is bounded by  $4Mr$ when $\varphi\in Lip_{1,1}$ and 
$$\omega_{r,R}(h):=\sup_{(x,t)\in B(0,R)\setminus B(a,r)\times[0,T]}
|\overline{V_h}(x,t)-v(x,t)|\to 0 \qquad\text{as}\qquad h\to0.$$
Convergence follows by first taking $r$ small and $R$ large enough to control
the second and third term and then $h$ small enough
to control the first term. 
\end{proof}

\section{Numerical simulations}\label{sec:num}

The simulations of this paper are done with the \emph{powers of the
  discrete Laplacian} method introduced in \cite{Cia-etal18} and
proved to be consistent of order $O(h^2)$ (independently of $s$) in
\cite{dTEnJa18}, see Appendix \ref{app:disfl} for the details. The
scheme \eqref{eq:nums} 
is still just $O(h)$ because of the discretization of first derivatives (see \eqref{eq:discop} and \eqref{eq:discDer}). We use 
compactly supported initial data $\mu_0$. Then, by the finite speed of
propagation property (see Theorem
\eqref{thm:maineq}\eqref{thm:maineq-itemFSP}), there is an
$M>0$ such that $v(x,t)=0$ when
$x<-M$ and $v(x,t)=\int_{\R}u(y,t)\dd y=\mu_0(\R^d)$ when $x>M$ for
$t\in[0,T]$. This allow us to perform simulations in the bounded
domain $|x|<M$ by extending the numerical
solution $V_i^j$ accordingly. The discretization of $(-\Delta)^s$
given in \eqref{eq:discretization} can then be reduced to a finite
sum, and since our scheme is explicit, $(-\Delta)^s_h V^j_i$ and
$D_hV_i^{j}$ are computed based on known $V$-values from the previous
time step.  

\subsection{Explicit solutions ($m=2$) and error measures} 

Let $t_0\geq0$ and $R>0$. By Theorem 2.2 in \cite{BiImKar11}, a
family of explicit solutions of \eqref{eq:maineq}-\eqref{eq:maineqini} when
$m=2$ is given by
\begin{equation}\label{eq:explicitsol}
u(x,t)=k_s(t+t_0)^{-\frac{1}{1+2s}}(R^2-|x(t+t_0)^{-\frac{1}{1+2s}}|^2)_+^s \quad \textup{with} \quad k_s=\frac{1}{2^{2s}(1+2s)}\frac{\Gamma(\frac{1}{2})}{\Gamma(1+s)\Gamma(\frac{1}{2}+s)}.
\end{equation}
It is also standard to check, by a simple change of variables, that
$u$ has the mass
\[
M_{R,s}:=\|u(\cdot,t)\|_{L^1(\R)}=k_s \int_{-R}^R (R^2-y^2)^s \dd y= k_sR^{1+2s} \int_{-1}^1 (1-z^2)^s \dd z= k_s R^{1+2s} \frac{\Gamma(\frac{1}{2}) \Gamma(1+s)}{\Gamma(\frac{3}{2}+s)}.
\]
In particular, when $t_0=0$, the solution has initial condition
\(
\mu_0=M_{R,s} \delta_0.
\)

We compute the error of our approximation at $t=1$ using a relative
$L^\infty$ norm for \eqref{eq:integP}-\eqref{eq:integPini},
\[
E_v(h)=\frac{\sup_{x_i\in h\Z}|v(x_i,1)-\overline{V}_h(x_i,1)|}{\|v(\cdot,1)\|_{L^\infty(\R)}},
\]
while for \eqref{eq:maineq}-\eqref{eq:maineqini} we
use a relative $L^1$ norm and a relative error of second moments, 
\begin{align*}
E_u(h)&=\frac{h \sum_{x_i\in h\Z} |\overline{U_h}(x_i,1)-u(x_i,1)|}{h
  \sum_{x_i\in h\Z} u(x_i,1)} 
\qquad\text{and}\qquad E_u^{w}(h)&=\frac{h\Big|\sum_{x_i\in
  h\Z}(\overline{U_h}(x_i,1)-u(x_i,1))x_i^2\Big|}{\sum_{x_i\in
  h\Z}u(x_i,1)x_i^2}. 
\end{align*}
Since test solutions have compact support, they have moments of every
order, and convergence of moments is a commonly used estimate
for weak convergence. Note that 
\[\int_\R
(W-w)(x)|\frac xR|^2  \dd  x\leq d_0(W,w)=\underset{\varphi\in Lip_{1,1}}{\sup}\int_\R
(W-w)(x)\varphi(x)  \dd  x\leq \|W-w\|_{L^1},\]
 for 
integrable functions $w,W $ 
with $\mathrm{supp}
(w,W)\subset B(0,R)$.  
In each convergence plot below we include a
dashed line showing first order convergence, the formal order of the scheme.

\subsection{Experiment 1:} We take the solution given by
\eqref{eq:explicitsol} with $t_0=1$ and $R=0.5$ for values $s=0.25$,
$s=0.5$ and $s=0.75$. Since $u_0$ is bounded, we use \eqref{as:CFL1}
to choose $\tau$ in terms of $h$. 
In Figure \ref{fig:Error1} we present the errors.

\begin{figure}[h!]
			\begin{center}
				\includegraphics[width=\textwidth]{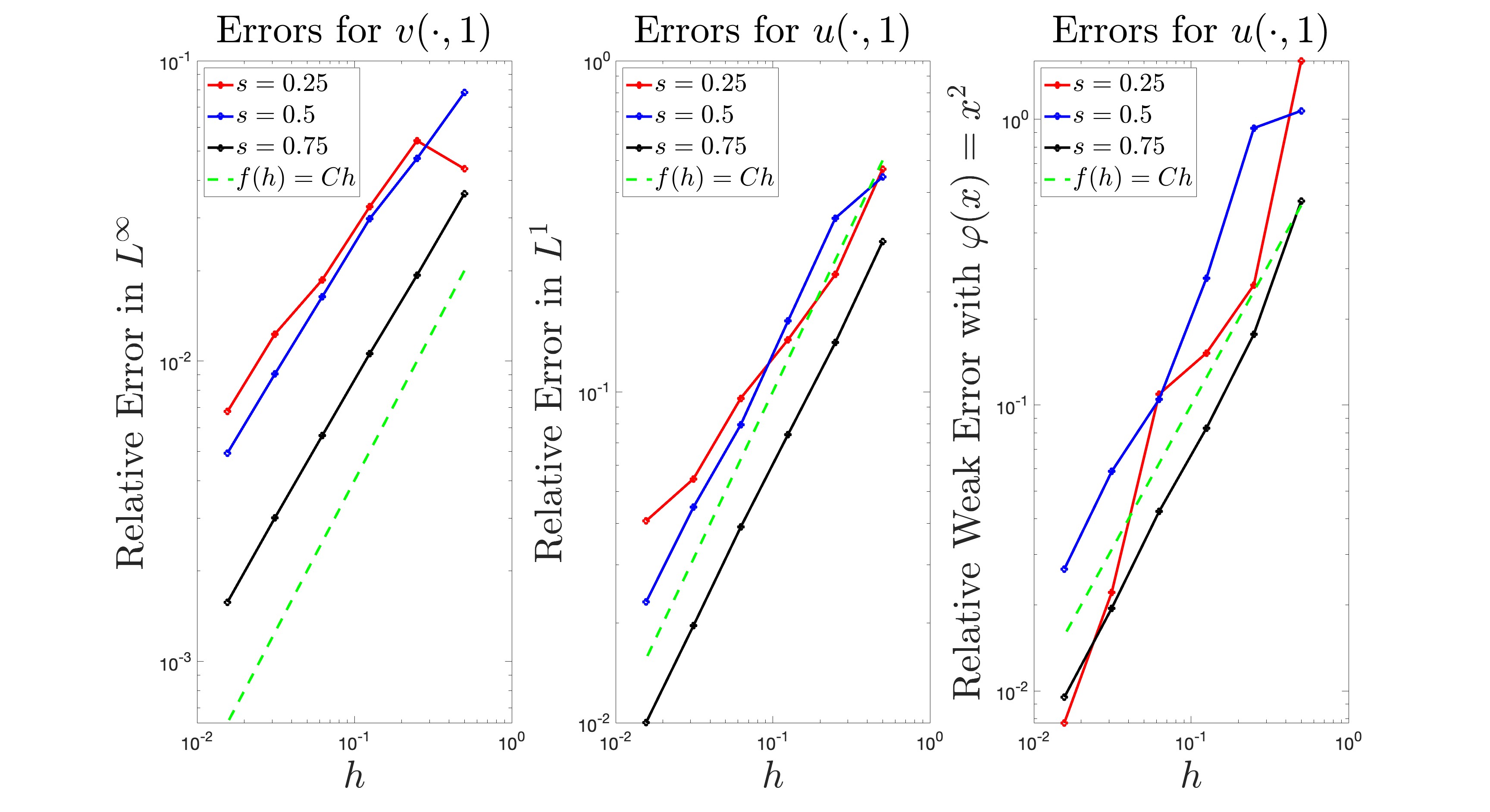}
			\end{center}
\caption{Errors in experiment 1 measured in $E_v(h)$, $E_u(h)$,
  $E_u^w(h)$ respectively.}
\label{fig:Error1}
\end{figure}

\subsection{Experiment 2:} We take the solution given by \eqref{eq:explicitsol} with $t_0=0$ and $R=1$ for values $s=0.25$, $s=0.5$ and $s=0.75$. Note that in this case the initial condition is a delta function given by
\[
\mu_0=M_{R,s} \delta_0.
\]
We then we use \eqref{as:CFL2} to choose $\tau$ in terms of $h$. The
error in \eqref{eq:integP}-\eqref{eq:integPini} is computed as in the
previous example. In Figure \ref{fig:Error2} we present the errors.

\begin{figure}[h!]
			\begin{center}
				\includegraphics[width=\textwidth]{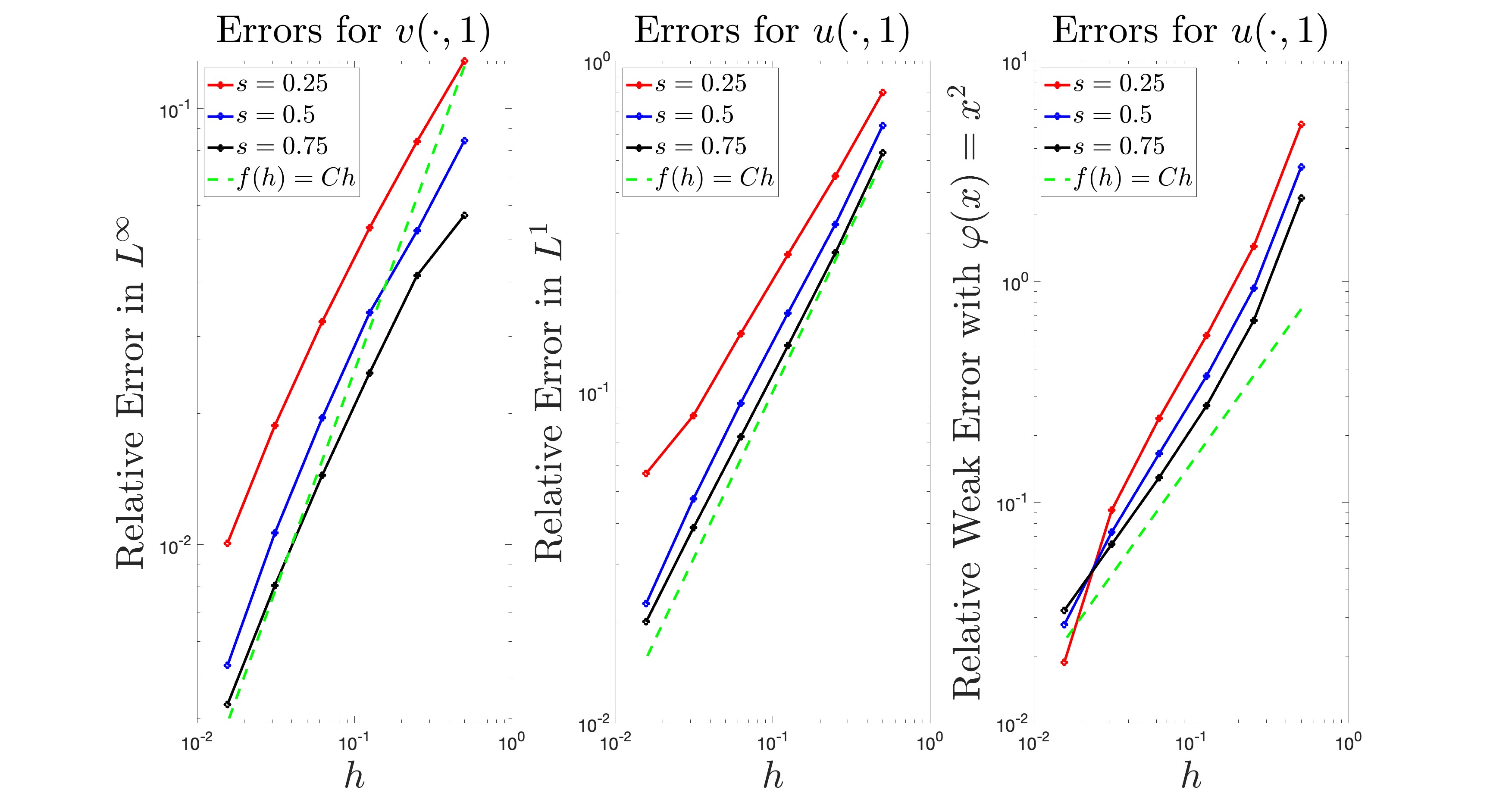}
			\end{center}
\caption{Errors in experiment 2 measured in $E_v(h)$, $E_u(h)$,
  $E_u^w(h)$ respectively.}
\label{fig:Error2}
\end{figure}

\subsection{Experiment 3:} We include here a simulation in the case $m=4$. Since, up to our knowledge, no explicit solutions are known in this case, we use  as  reference a simulation done in a very refined grid in order to compute the numerical errors. More precisely, we consider the initial condition
\[
u_0(x)=e^{-\frac{1}{\left(1-\left(x-3/2\right)^2\right)_+}} + 2e^{-\frac{1}{(1-(x+3/2)^2)_+}}. 
\]
To compute the reference solution we use the numerical scheme with $h_0=\frac{1}{2^{11}}$. The error in \eqref{eq:integP}-\eqref{eq:integPini} is calculated  by
relative $L^\infty$ norm \(E_{\overline{V}_{h_0}}(h)\), while for the
error in \eqref{eq:maineq}-\eqref{eq:maineqini} we use the relative
$L^1$ norm \(E_{\overline{U}_{h_0}}(h)\).  In Figure \ref{fig:Error3}
we present the errors. 

\begin{figure}[h!]
			\begin{center}
				\includegraphics[width=\textwidth]{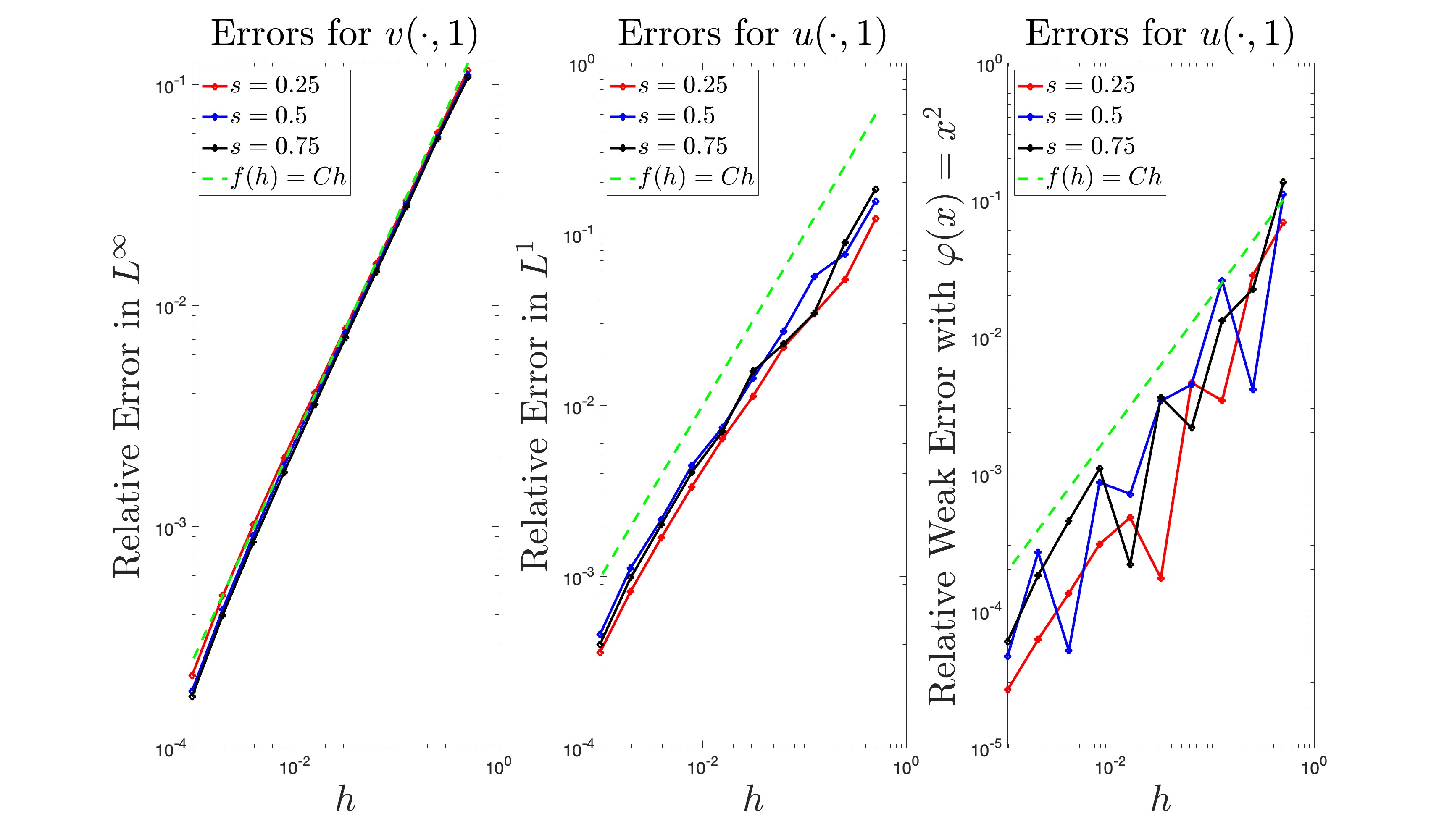}
			\end{center}
\caption{Errors in experiment 3  measured in $E_v(h)$, $E_u(h)$,
  $E_u^w(h)$ respectively.}
\label{fig:Error3}
\end{figure}

\subsection{Experiment 4:} It is well-known that the comparison
principle does not hold for \eqref{eq:maineq}-\eqref{eq:maineqini}. In
Figure \ref{fig:Nocomparison1} we illustrate this by
showing the evolution of two solutions which are ordered initially but
not at later times. We let $u$ be given by \eqref{eq:explicitsol}, take
$t_0=1$ and $R=0.5$, define
\[
(u_{0})_1(x)=u(x-1,0)\qquad \textup{and}\qquad  (u_0)_2(x)=u(x-1,0) + 2\cdot u(x+1,0), 
\]
and compute the corresponding solutions $u_1$ (represented in  dotted  blue)
and $u_2$ (in  solid  red). 

\begin{figure}[h!]
			\begin{center}
				\includegraphics[width=\textwidth]{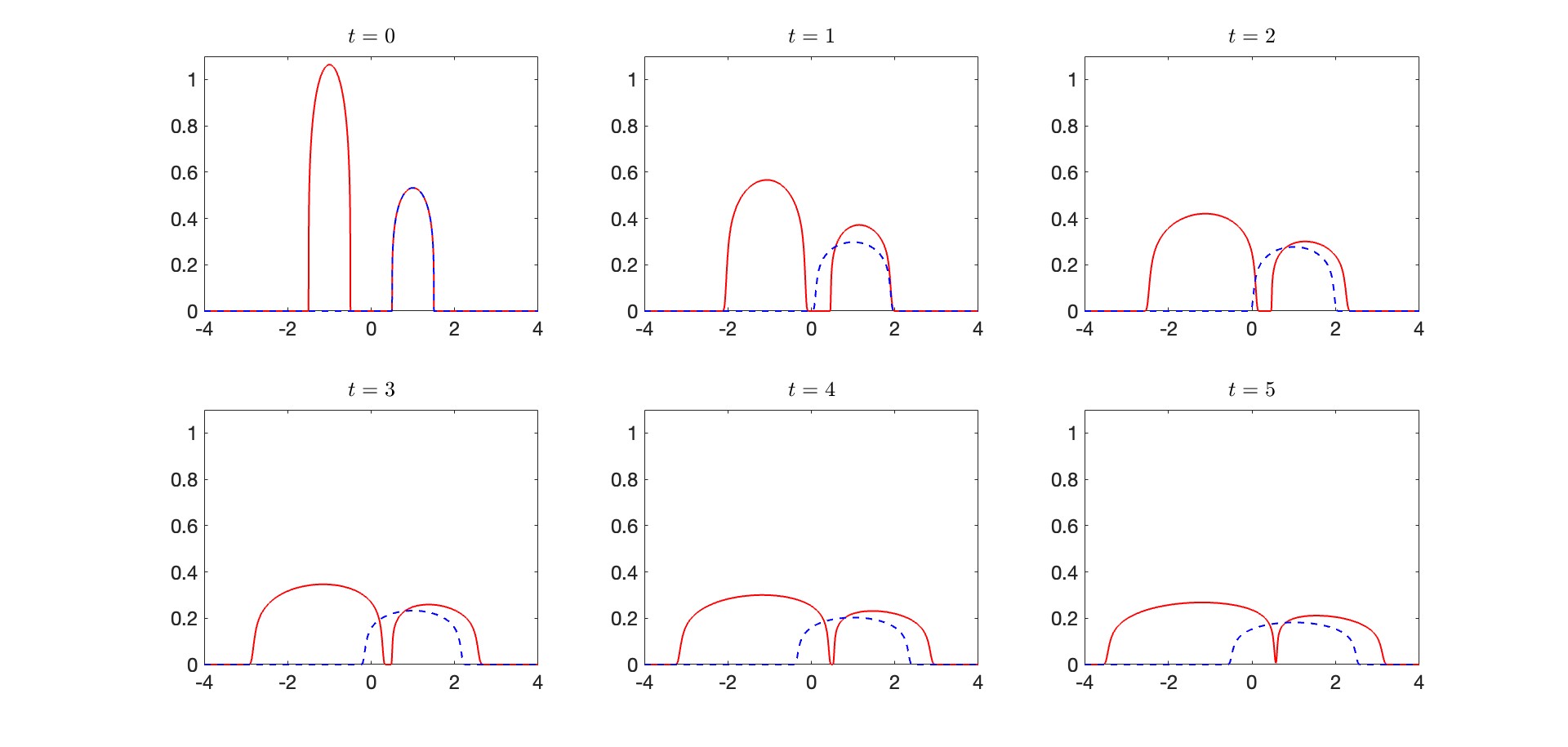}
			\end{center}
\caption{Example of no comparison principle: $u_1$ (dotted  blue) and $u_2$ (solid  red).}
\label{fig:Nocomparison1}
\end{figure}

In Figure \ref{fig:Nocomparison2} we present corresponding solutions of \eqref{eq:integP}-\eqref{eq:integPini}. In this case comparison holds, as expected from the theoretical results.

\begin{figure}[h!]
			\begin{center}
				\includegraphics[width=\textwidth]{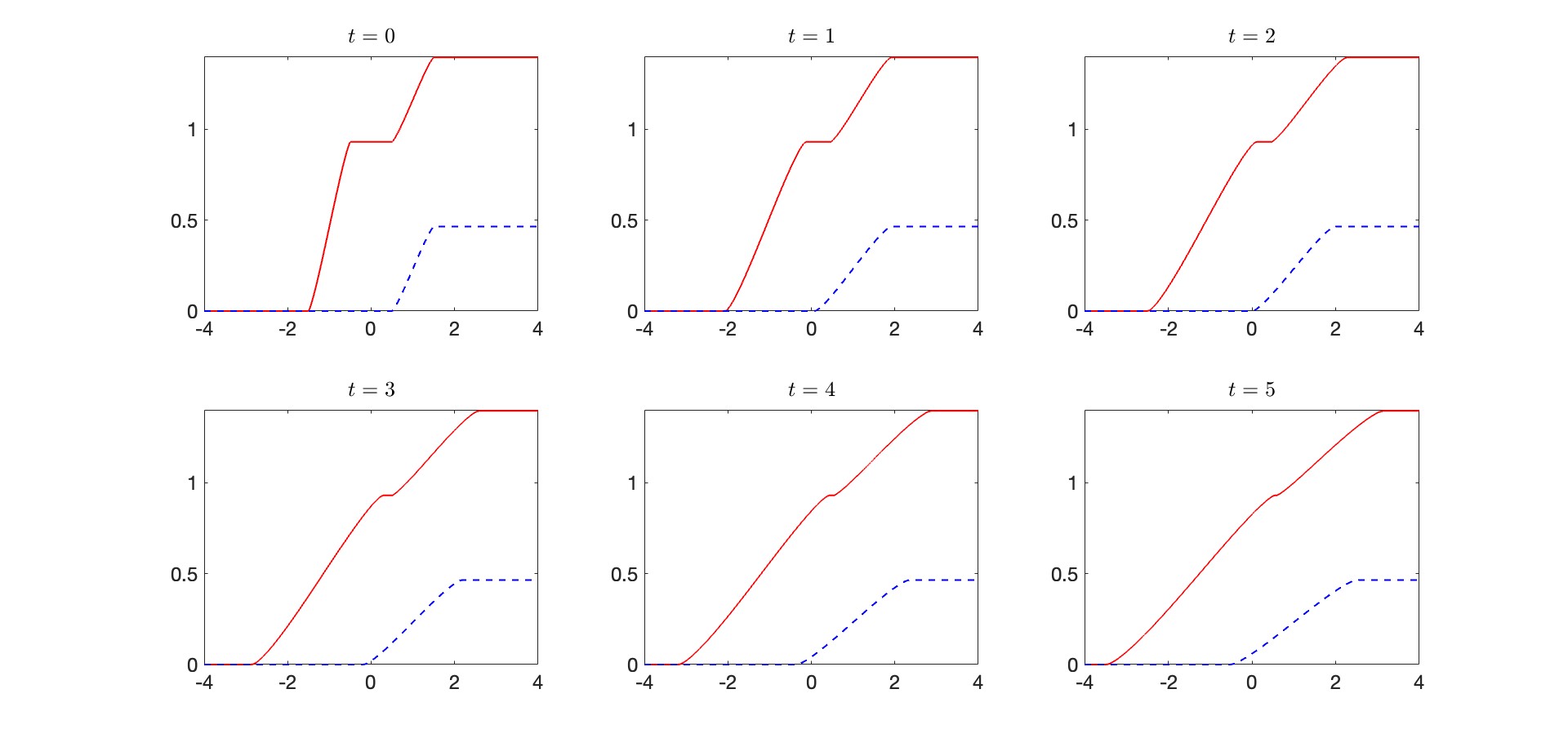}
			\end{center}
\caption{ Example of comparison for the integrated problem: $v_1$ (dotted  blue) and $v_2$ (solid  red).}
\label{fig:Nocomparison2}
\end{figure}

\subsection*{Acknowledgment:}  F. del Teso was supported by the Spanish Government through RYC2020-029589-I, PID2021-127105NB-I00,  CNS2024-154515 and CEX2019-000904-S  funded by the MICIN/AEI.
Part of this material is based upon work supported by the Swedish Research Council under grant no. 2016-06596 while F. del Teso was in residence at Institut Mittag-Leffler in Djursholm,
Sweden, during the research program “Geometric Aspects of Nonlinear Partial Differential Equations”,
fall of 2022.

E. R. Jakobsen received funding from the Research Council of Norway under grant agreement no. 325114 “IMod. Partial differential equations, statistics and data: An interdisciplinary approach to data-based modelling”.

We would like to thank David G\'omez Castro for reading the paper carefully, for his suggestions, and the many good discussions we had. 

\bigskip

\appendix
\section{Discretizations of the fractional Laplacian}\label{app:disfl}

For a reference of various discretizations of the fractional Laplacian we refer to Section 4 in \cite{dTEnJa18}. 

The simulations of this paper are done with the so-called \emph{powers of the discrete Laplacian}, that was introduced in \cite{Cia-etal18} and proved to be consistent of order $O(h^2)$ (unconditionally on $s$) in \cite{dTEnJa18}. The weights of the discretization are given by
\[
\omega_k= \frac{1}{h^{2s}} \frac{2^{2s} \Gamma(1/2+s) \Gamma(|k|-s)}{\sqrt{\pi} |\Gamma(-s)|\Gamma(|k|+1+s) }.
\]
The fact that assumption \eqref{as:cons} is satisfied is proved in Theorem 1.7 in \cite{Cia-etal18}. To check assumption \eqref{as:m} we rely on the following estimate that can be found in Theorem 1.1. in \cite{Cia-etal18}:
\[
\omega_k\leq \frac{c_s}{h^{2s}|k|^{1+2s}} \quad \textup{for some constant} \quad c_s>0.
\]
Thus,
\[
\sum_{k\not=0} \omega_k\leq \frac{c_s}{h^{2s}} \sum_{k\not=0} \frac{1}{|k|^{1+2s}}= C_{s,1} h^{-2s}.
\]
We also have
\[
\sum_{|kh|>1} \omega_k \leq c_s h \sum_{|kh|>1} \frac{1}{|kh|^{1+2s}}\leq c_s \int_{|x|>1/2} \frac{\dd x}{|x|^{1+2s}}= C_{s,2}.
\]
Moreover, if $s>1/2$,
\[
\sum_{0<|kh|\leq 1}|kh| \omega_k \leq c_s  \sum_{0<|kh|\leq 1} \frac{|kh|}{h^{2s}|k|^{1+2s}}=c_s h^{1-2s} \sum_{0<|kh|\leq 1} \frac{1}{|k|^{2s}} = c_s h^{1-2s} \sum_{k\not=0} \frac{1}{|k|^{2s}}=C_{s,3} h^{1-2s},
\]
while, if $s<1/2$,
\[
\sum_{0<|kh|\leq 1}|kh| \omega_k =  c_s h\sum_{0<|kh|\leq1} \frac{1}{|kh|^{2s}}\leq c_s  \int_{-2}^2 \frac{\dd x}{|x|^{2s}}= C_{s,3}.
\]
Finally, if $s=1/2$,
\[
\sum_{0<|kh|\leq 1}|kh| \omega_k \leq c_s\sum_{0<|k|\leq1/h}\frac{1}{|k|}= c_s(1+\int_{1}^{1/h} \frac{\dd x}{x})=c_{s}(1+\log(1/h))\leq C_{s,3}|\log(h)|.
\]
It is enough to take $C_s=\max\{C_{s,1},C_{s,2},C_{s,3}\}$ to ensure that \eqref{as:m} is satisfied.

\bibliographystyle{abbrv}



\end{document}